\newtheorem{theorem}{Theorem}[section]
\newtheorem{lemma}[theorem]{Lemma}
\theoremstyle{definition}
\theoremstyle{remark}
\newtheorem{remark}[theorem]{Remark}
\numberwithin{equation}{section}
\newcommand{\Zi}{\mathbb{Z}[i]}
\newcommand{\ds}{\displaystyle}
\newcounter{ctProba}
\DeclareMathAlphabet\mathcal{OMS}{cmsy}{m}{n}
\newcommand{\R}{\mathbb{R}}
\newcommand{\Z}{\mathbb{Z}}
\newcommand{\N}{\mathbb{N}}
\newcommand{\Q}{\mathbb{Q}}
\newcommand{\OK}{\mathbb{Z}[i]}
\DeclareMathOperator{\Max}{Max}
\newtheorem*{rep@theorem}{\rep@title}
\newcommand{\newreptheorem}[2]{%
\newenvironment{rep#1}[1]{%
 \def\rep@title{#2 \ref{##1}}%
 \begin{rep@theorem}}%
 {\end{rep@theorem}}}
\theoremstyle{plain}
\theoremstyle{plain}
\theoremstyle{plain}
\theoremstyle{plain}
\newtheorem{proposition}[theorem]{Proposition}
\theoremstyle{plain}
\theoremstyle{definition}
\theoremstyle{remark}
\DeclareMathOperator{\Rez}{Re}
\DeclareMathOperator{\Imz}{Im}
\begin{document}
\author{Nikola Ad\v zaga}
\address{Nikola Ad\v zaga, Faculty of Civil Engineering, University of Zagreb}
\email{nadzaga@grad.hr}

\author{Alan Filipin}
\address{Alan Filipin, Faculty of Civil Engineering, University of Zagreb}
\email{filipin@grad.hr}

\author{Zrinka Franušić}
\address{Zrinka Franušić, Faculty of Science, University of Zagreb}
\email{fran@math.hr}

\newgeometry{top=5.5cm, left=2cm, right=1.8cm}
\title{On the extensions of the Diophantine triples in Gaussian integers} 


\subjclass[2010]{primary 11D09; secondary 11J68, 11J86} 

\date{\today}
\keywords{Diophantine $m$-tuples, Diophantine approximation, Pell equations}

\begin{abstract}
A Diophantine $m$-tuple is a set of $m$ distinct integers such that the product of any two distinct elements plus one is a perfect square. In this paper we study the extensibility of a Diophantine triple $\{k-1, k+1, 16k^3-4k\}$ in Gaussian integers $\Z{[i]}$ to a Diophantine quadruple. Similar one-parameter family, $\{k-1, k+1, 4k\}$, was studied in \cite{Zrinka},where it was shown that the extension to a Diophantine quadruple is unique (with an element $16k^3-4k$). The family of the triples of the same form $\{k-1, k+1, 16k^3-4k\}$ was studied in rational integers in \cite{16k3z}. It appeared as a special case while solving the extensibility problem of Diophantine pair $\{k-1, k+1\}$, in which it was not possible to use the same method as in the other cases. As authors (Bugeaud, Dujella and Mignotte) point out, the difficulty appears because the gap between $k+1$ and $16k^3-4k$ is not sufficiently large. We find the same difficulty here while trying to use Diophantine approximations. Then we partially solve this problem by using linear forms in logarithms.
\end{abstract}
\maketitle
\setstretch{1.1}

\section{Introduction}
A long-standing conjecture, motivated by work of Baker and Davenport \cite{baker}, that there is no Diophantine quintuple, was proven by He, Togbé and Ziegler \cite{nemapetorke}. In other rings of integers, there are not many results. E.g.~we find only about $10$ papers solving similar problems in the ring of Gaussian integers. We can highlight \cite{BFT} and \cite{Zrinka}, which deal with the extension of Diophantine triples from one-parameter families, and \cite{yo}, which shows that there is no Diophantine $m$-tuple in imaginary quadratic number ring with $m\geqslant 43$.

We deal with a parametric family of triples $\{k-1, k+1, 16k^3-4k\}$, but we start with a general triple and show some results which are useful for any family of triples. Assume that a Diophantine triple $\{a, b, c\}$ in Gaussian integers $\Zi$ can be extended with a fourth element $d$. By eliminating $d$ from the equations it satisfies ($ad+1=x^2$, $bd+1=y^2$ and $cd+1=z^2$), we get a system of two Pell-type equations with common unknown. We show that the structure of the solutions of this system is the same as in the rational integers case.
A solution of this system gives us two simultaneous approximations of square roots close to $1$. One can use Diophantine approximations in the general case (by assuming that $|c|$ is much bigger than $|b|$, say $|c|>|b|^{15}$), which was done in \cite{yo}. However, here we show that this is not useful for the triple of the form $\{k-1, k+1, 16k^3-4k\}$. 

We also prove that the linear form in logarithms usually involved in approaching these problems is not zero under certain conditions. This might be useful in lowering the general upper bound, and we also use it here to partially resolve the extensibility problem of the triple $\{k-1, k+1, 16k^3-4k\}$.


\section{System of Pell-type equations}
Let $\{a, b, c\} \subset \OK$ be a Diophantine triple in Gaussian integers $\OK$. Without loss of generality, we may assume $0 < |a| \leqslant |b| \leqslant |c|$. Then there are $r, s$ and $t$ in $\OK$ such that $ab+1 = r^2, ac+1=s^2, bc+1=t^2.$ In \cite{yo}, the following lemma was proven (for general imaginary quadratic number rings).
 
\begin{lemma}\label{acnekva} If $\{a, b, c\}$ is a Diophantine triple in the imaginary quadratic number ring $\OK$ and $abc\neq 0$, then $ab$, $ac$ and $bc$ are not squares in $\Zi$.
\end{lemma}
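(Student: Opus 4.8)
The plan is to argue by contradiction, using that the only units of $\Zi$ are $1,-1,i,-i$. Since the hypothesis is symmetric in $a,b,c$, it suffices to rule out that $ab$ is a square, and the cases in which $ac$ or $bc$ is a square then follow after relabelling. So suppose $ab=\lambda^2$ for some $\lambda\in\Zi$. Combining this with $ab+1=r^2$ gives $r^2-\lambda^2=1$, that is, $(r-\lambda)(r+\lambda)=1$, so $r-\lambda$ and $r+\lambda$ are units whose product is $1$.

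Enumerating the possibilities $(r-\lambda,r+\lambda)\in\{(1,1),(-1,-1),(i,-i),(-i,i)\}$, the first two give $\lambda=0$, hence $ab=0$, contradicting $abc\neq 0$; the last two give $r=0$ and $\lambda=\mp i$, hence $ab=\lambda^2=-1$. Thus we may assume $ab=-1$. Then $a$ and $b$ are units, and $ab=-1$ together with $a\neq b$ (the elements of a Diophantine triple being distinct) forces $\{a,b\}=\{1,-1\}$, since the only other factorisations of $-1$ into units are $a=b=i$ and $a=b=-i$.

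It remains to bring in the third element $c$. Using $ab=-1$ and $a+b=0$, we get $(ac+1)(bc+1)=ab\,c^2+(a+b)c+1=1-c^2$, which is a perfect square (it equals $(st)^2$). Writing $w=st$ gives $c^2+w^2=1$, i.e. $(c+iw)(c-iw)=1$; running through the units once more yields $c\in\{0,1,-1\}$. But $c=0$ contradicts $abc\neq 0$, while $c=1$ or $c=-1$ contradicts distinctness of the elements of $\{a,b,c\}$. This contradiction proves the lemma.

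No step here is really an obstacle; the only point worth noting is that, unlike in a typical imaginary quadratic ring (where the units $\pm1$ immediately force $\lambda=0$), over $\Zi$ the equation $(r-\lambda)(r+\lambda)=1$ has the extra solutions produced by the unit $i$, so one genuinely has to use the third element $c$ to dispose of the case $ab=-1$.
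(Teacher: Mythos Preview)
Your proof is correct. The factorisation $(r-\lambda)(r+\lambda)=1$ together with the explicit unit group of $\Zi$ cleanly reduces the problem to the exceptional case $ab=-1$, and your use of the third element $c$ via $(ac+1)(bc+1)=1-c^2=(st)^2$ to eliminate that case is valid; the enumeration of unit pairs in both factorisations is complete.

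As for comparison: the paper does not actually prove this lemma. It simply states it and cites \cite{yo}, where the result is established for arbitrary imaginary quadratic number rings. Your argument is therefore not a variant of the paper's proof but a self-contained one tailored to $\Zi$. Your closing remark is apt: in every imaginary quadratic ring other than $\Zi$ and $\Z[\omega]$ the unit group is $\{\pm 1\}$ and the argument terminates immediately at $\lambda=0$, whereas over $\Zi$ the extra units genuinely force you to invoke the third element of the triple. The cited reference presumably handles all rings uniformly (and must in particular deal with the six units of $\Z[\omega]$ as well), but for the purposes of this paper, which works exclusively in $\Zi$, your direct argument is entirely adequate and arguably preferable to an external citation.
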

If there is $d\in \OK$ such that $\{a, b, c, d\}$ is a Diophantine quadruple, then there are $x, y, z\in\OK$ such that $ad+1 = x^2, bd+1=y^2, cd+1=z^2$. Eliminating $d$ implies that
\begin{align}
az^2-cx^2 &= a-c \label{Eq:Pellac}\\
bz^2-cy^2 &= b-c .\label{Eq:Pellbc}
\end{align}
These equations are similar to Pell's equations and their solutions have a very similar structure. The solutions of Pell-type equations ($x^2-Dy^2=N$) in imaginary quadratic rings are described in \cite{fjel}, as well as here, in a slightly different manner, adapted for the problem at hand.

\begin{lemma}\label{lem:fund}
There are positive integers $i_0$ and $j_0$, elements $z_0^{(i)}, x_0^{(i)}, z_1^{(j)}, x_1^{(j)}$ of $\OK$, for $i = 1, \dotsc, i_0$ and $j = 1, \dotsc, j_0$, such that:
\begin{itemize}
\item[a)] $(z_0^{(i)}, x_0^{(i)})$ are solutions of \eqref{Eq:Pellac}, while $(z_1^{(j)}, y_1^{(j)})$ are solutions of \eqref{Eq:Pellbc}. The solutions denoted here are called \emph{fundamental}.
\item[b)] Fundamental solutions satisfy the following inequalities:
\begin{align*}
1 \leqslant |x_0^{(i)}| &\leqslant \sqrt{\frac{|a||c-a|}{|s|-1}}, &
1 \leqslant |z_0^{(i)}| &\leqslant \sqrt{\frac{|c-a|}{|a|} + \frac{|c||c-a|}{|s|-1}},\\
1 \leqslant |y_1^{(j)}| &\leqslant \sqrt{\frac{|b||c-b|}{|t|-1}}, &
1 \leqslant |z_1^{(j)}| &\leqslant \sqrt{\frac{|c-b|}{|b|} + \frac{|c||c-b|}{|t|-1}}. 
\end{align*}
\item[c)] If $(z, x)$ is the solution of \eqref{Eq:Pellac}, then there are $i \in \{1, \dotsc, i_0\}$ and $m\in\Z$ such that \[ z\sqrt{a}+x\sqrt{c} = (z_0^{(i)}\sqrt{a}+x_0^{(i)}\sqrt{c})(s+\sqrt{ac})^m. \]
If $(z, y)$ is the solution of \eqref{Eq:Pellbc}, then there are $j \in \{1, \dotsc, j_0\}$ and $n \in\Z$ such that\[ z\sqrt{b}+y\sqrt{c} = (z_1^{(j)}\sqrt{a}+y_1^{(j)}\sqrt{c})(t+\sqrt{bc})^n.\]
\end{itemize}
\end{lemma}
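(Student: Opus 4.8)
The plan is to adapt the classical theory of Pell-type equations $x^2 - Dy^2 = N$ to the imaginary quadratic setting. Consider equation \eqref{Eq:Pellac}, rewritten as $az^2 - cx^2 = a - c$. Multiplying by $a$ gives $(az)^2 - ac\,x^2 = a(a-c)$, so setting $\xi = az$, $D = ac$, $N = a(a-c)$ turns this into a standard Pell-type equation $\xi^2 - D x^2 = N$. By Lemma \ref{acnekva}, $D = ac$ is not a square in $\Zi$, so $\sqrt{ac}$ is irrational and the ring $\Z[i][\sqrt{ac}]$ behaves like a real quadratic order in the relevant respects: the norm form $u^2 - ac\,v^2$ is multiplicative, and the fundamental automorphism of the associated Pell equation $u^2 - ac\,v^2 = 1$ is played by $s + \sqrt{ac}$, since $s^2 - ac = 1$ by hypothesis. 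Thus multiplication by $(s+\sqrt{ac})^m$ permutes the solution set of \eqref{Eq:Pellac}; this gives one direction of part c).

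First I would set up the action: given any solution $(z,x)$ of \eqref{Eq:Pellac}, form $\theta = z\sqrt a + x \sqrt c$ and consider the orbit $\{\theta (s+\sqrt{ac})^m : m \in \Z\}$. Each orbit element corresponds to another solution; I then need to show every orbit contains a \emph{fundamental} representative, i.e.\ one whose size is bounded as in part b). The key estimate is that if $|\theta|$ or its conjugate $|\bar\theta|$ (where bar sends $\sqrt c \mapsto -\sqrt c$, equivalently the second embedding) is too large, then multiplying by the appropriate power of $(s+\sqrt{ac})^{\pm 1}$ strictly decreases the relevant quantity; since $|s| > 1$ (as $s^2 = ac+1$ and $|ac| \geqslant |a|^2 \geqslant 1$, one checks $|s| \neq 1$ using Lemma \ref{acnekva} to rule out $ac+1$ being a unit-type degenerate case), the contraction factor is bounded away from $1$. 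This forces each orbit down to a representative lying in a bounded region, and only finitely many lattice points of $\OK \times \OK$ lie in that region, giving finitely many fundamental solutions $i_0 < \infty$. The explicit bounds in b) come from making this ``reduced region'' argument quantitative: from $\theta\bar\theta = az^2 - cx^2 \cdot(\text{sign bookkeeping})$ one extracts $|x|$ and $|z|$ bounds in terms of $|a|$, $|c-a|$, $|s|$ by combining $|\theta \bar\theta| = |a(c-a)|$-type identities with the reduction inequality $|\bar\theta| \leqslant |\theta|$ or its analogue. The same argument applied to \eqref{Eq:Pellbc} with $D = bc$, $N = b(b-c)$, automorphism $t + \sqrt{bc}$ yields the $j$-statements.

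The main obstacle I anticipate is \textbf{controlling the reduction step in the complex setting}: over $\R$ the ordering makes ``the fundamental solution is the smallest one'' immediate, but over $\Zi$ there is no ordering, so I must argue with absolute values of both an element and its conjugate simultaneously, and show that the two-variable quantity (roughly $\max(|\theta|,|\bar\theta|)$ or $|\theta| + |\bar\theta|$) is genuinely decreased by the automorphism until it enters the bounded region. One has to be careful that multiplication by $s+\sqrt{ac}$ scales one of $|\theta|, |\bar\theta|$ up and the other down (since the product $|\theta\bar\theta|$ is the fixed norm $|a(c-a)|$), but the complex phases mean the ``up/down'' behavior is not monotone in a single step the way it is for real numbers — so I would track $|\theta/\bar\theta|$ and show the power of $s+\sqrt{ac}$ can be chosen to bring this ratio into an annulus of bounded radius, then translate that back into the stated bounds on $|x_0^{(i)}|$ and $|z_0^{(i)}|$. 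Once the reduction lemma is in place, parts a), b), c) follow by bookkeeping, and finiteness of $i_0, j_0$ is automatic. The remaining check is that the exponent $m$ (resp.\ $n$) is a genuine integer and not a half-integer, which follows because $s + \sqrt{ac} \in \Z[i][\sqrt{ac}]$ is the generator of the full automorphism group of the norm-one equation here (there is no ``smaller'' automorphism, unlike some real quadratic fields), a point I would verify directly from $s^2 - ac = 1$.
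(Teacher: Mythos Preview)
Your overall architecture is correct and would lead to a proof: the orbit structure under multiplication by $(s+\sqrt{ac})^{\pm 1}$, the norm identity $\theta\bar\theta = a - c$ (not $a(c-a)$, by the way --- check the sign bookkeeping), and the finiteness of fundamental representatives in a bounded region are all the right ingredients. However, the paper's reduction step is considerably simpler than the one you outline, and sidesteps exactly the obstacle you flag.

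Rather than tracking the ratio $|\theta/\bar\theta|$ and wrestling with complex phases, the paper works directly with $|x|$: in each orbit pick $(x^*,z^*)$ with $|x^*|$ minimal. Let $(x',z')$ and $(x'',z'')$ be the two neighbours in the orbit (shift by $+1$ and $-1$). Then $x' + x'' = 2sx^*$, so $|x'| + |x''| \geq 2|s|\,|x^*|$; combined with $|x'|,|x''| \geq |x^*|$ this forces $|x'x''| \geq |s|\,|x^*|^2$. But one computes algebraically
\[
x'x'' = (sx^*+az^*)(sx^*-az^*) = s^2(x^*)^2 - a^2(z^*)^2 = (x^*)^2 + a(c-a),
\]
using $az^{*2} - cx^{*2} = a-c$ to eliminate $z^*$. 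The triangle inequality then gives $|a||c-a| + |x^*|^2 \geq |s|\,|x^*|^2$, i.e.\ $|x^*|^2 \leq |a||c-a|/(|s|-1)$, which is exactly the stated bound. The bound on $|z^*|$ follows immediately from the equation.

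What this buys over your plan: there is no need to control two quantities $|\theta|,|\bar\theta|$ simultaneously, no annulus argument, and the explicit constants in part b) fall out in one line. Your approach would work but would require an extra step translating the annulus condition on $|\theta/\bar\theta|$ back into bounds on $|x|,|z|$, and the constants you obtain would likely be messier than the clean $|a||c-a|/(|s|-1)$. The concern about half-integer exponents is a non-issue here: the orbit is \emph{defined} by integer powers of $s+\sqrt{ac}$, and part c) only claims that every solution lies in some such orbit, which follows once you know each orbit meets the bounded region.
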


\begin{proof}
If $(x, z)$ is the solution of \eqref{Eq:Pellac}, then the pairs $(x_m, y_m) \in \OK^2$, defined as \begin{equation}
 x_m\sqrt{c}+z_m\sqrt{a} = (x\sqrt{c}+z\sqrt{a})(s+\sqrt{ac})^m\label{eq:pellm}
\end{equation} are also the solutions of \eqref{Eq:Pellac} for every $m \in \Z$. We prove this inductively: for $m=1$, we have $x_1\sqrt{c}+z_1\sqrt{a} = (x\sqrt{c}+z\sqrt{a})(s+\sqrt{ac}) =(sx+az)\sqrt{c}+(sz+cx)\sqrt{a}$, i.~e.~$x_1 = sx+az, z_1 = sz+cx$. Let us note here that we have used Lemma \ref{acnekva}. Then
\begin{align*}
az_1^2-cx_1^2 &= a(sz+cx)^2-c(sx+az)^2 = s^2(az^2-cx^2)+ac(cx^2-az^2)\\
				&= s^2(a-c)+ac(c-a) = (s^2-ac)(a-c)=a-c.
\end{align*}
Inductively it follows that $(x_m, z_m)$ is the solution of \eqref{Eq:Pellac} for every $m\in\N_0$. Analogously one resolves the case $m=-1$ to conclude that $(x_m, z_m)$ is the solution \eqref{Eq:Pellac} for every $m\in\Z$.

Let $(x^*, z^*)$ be the solution such that $|x^*|$ is minimal among the solutions from the sequence $(x_m, z_m)_{m\in\Z}$ defined in \eqref{eq:pellm}.
Let us denote the next and the previous solution in the sequence by $(x', z')$ and $(x'', z'')$. More precisely, let $x' = sx^*+az^*, z' = sz^*+cx^*$ and $x''=sx^*-az^*, z''=sz^*-cx^*$. Then $|x'| \geqslant |x^*|$ and $|x''|\geqslant |x^*|$. On the other hand, by $|x'|+|x''| \geqslant |x'+x''|=2|s||x^*|$ it follows that $|x'| \geqslant |sx^*|$ or $|x''| \geqslant |sx^*|$. In any case, we can conclude that the product $|x'x''| \geqslant |sx^*| \cdot |x^*|$. Since $(z^*, x^*)$ is the solution of \eqref{Eq:Pellac}, we obtain equivalent inequalities $|(sx^*+az^*)(sx^*-az^*)| \geqslant |s| \cdot |x^*|^2$, $|(ac+1)(x^*)^2-a^2(z^*)^2| \geqslant |s| \cdot |x^*|^2$ and $|a(c-a)+(x^*)^2| \geqslant |s| \cdot |x^*|^2$.



We derive the upper bound on $|x^*|$ from the last inequality, $|a|\cdot |c-a|+|x^*|^2 \geqslant |a(c-a)+(x^*)^2| \geqslant |s| \cdot |x^*|^2$, so $|a|\cdot |c-a| \geqslant (|s|-1)|x^*|^2$, and finally
$\ds |x^*|^2 \leqslant \frac{|a|\cdot|c-a|}{|s|-1}$.

This bound on $|x^*|$ implies an upper bound on $|z^*|$,
$\ds |z^*| = \frac{|c(x^*)^2-c+a|}{|a|} \leqslant \frac{|c||x^*|^2}{|a|}+\frac{|c-a|}{|a|}$. Without loss of generality, we may assume that $x_0 = x^*, z_0 = z^*$.

Analogously one gets the upper bounds on fundamental solutions of the equation \eqref{Eq:Pellbc}.
\end{proof}

From $(c)$ part of the Lemma \ref{lem:fund} one can obtain and solve the same recurrence relations as in the integer case (see \cite{duje}). More precisely, the following lemma holds
\begin{lemma}\label{lema:rekurzije}
Every solution $z$ of the equation \eqref{Eq:Pellac} is contained in one of the following sequences
\begin{equation} \label{recv}
v_0^{(i)} = z_0^{(i)}, \quad v_1^{(i)} = sz_0^{(i)}+cx_0^{(i)}, \quad v_{m+2}
^{(i,)} = 2sv_{m+1}^{(i)}-v_m^{(i)} \quad \text{ for } i=1, \dotsc, i_0.
\end{equation}

Similarly, every solution $z$ of the equation \eqref{Eq:Pellbc} is contained in one of the following sequences
\begin{equation} \label{recw}
w_0^{(j)} = z_1^{(j)}, \quad w_1^{(j)} = tz_1^{(j)}+cy_1^{(j)}, \quad w_{n+2}^{(i)} = 2tw_{n+1}^{(j)}-w_n^{(j)}, \quad \text{ for } j=1, \dotsc, j_0.
\end{equation}
\end{lemma}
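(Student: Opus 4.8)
The plan is to convert the multiplicative description of the solution set in Lemma~\ref{lem:fund}(c) into the linear recurrences \eqref{recv}, \eqref{recw} by also tracking the companion coordinate. Fix $i\in\{1,\dots,i_0\}$ and define $v_m^{(i)},u_m^{(i)}\in\OK$ by
\[
v_m^{(i)}\sqrt a + u_m^{(i)}\sqrt c \;=\; \bigl(z_0^{(i)}\sqrt a + x_0^{(i)}\sqrt c\bigr)\bigl(s+\sqrt{ac}\bigr)^m ,\qquad m\in\Z .
\]
Since $ac$ is not a square in $\OK$ by Lemma~\ref{acnekva}, the coefficients of $\sqrt a$ and $\sqrt c$ are read off uniquely, so $(v_m^{(i)},u_m^{(i)})$ is well defined and, by Lemma~\ref{lem:fund}(a), solves \eqref{Eq:Pellac}; moreover $v_0^{(i)}=z_0^{(i)}$. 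Multiplying the defining identity by $s+\sqrt{ac}$ and using $\sqrt a\cdot\sqrt{ac}=a\sqrt c$ and $\sqrt c\cdot\sqrt{ac}=c\sqrt a$ gives the coupled first-order system
\[
v_{m+1}^{(i)} = s\,v_m^{(i)} + c\,u_m^{(i)},\qquad u_{m+1}^{(i)} = s\,u_m^{(i)} + a\,v_m^{(i)},
\]
so in particular $v_1^{(i)}=s z_0^{(i)}+c x_0^{(i)}$, the claimed second initial value.

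Eliminating $u_m^{(i)}$ then yields the second-order recurrence. From the first equation $c\,u_m^{(i)} = v_{m+1}^{(i)}-s\,v_m^{(i)}$, hence
\[
v_{m+2}^{(i)} = s\,v_{m+1}^{(i)} + c\,u_{m+1}^{(i)} = s\,v_{m+1}^{(i)} + cs\,u_m^{(i)} + ac\,v_m^{(i)} = 2s\,v_{m+1}^{(i)} + (ac-s^2)\,v_m^{(i)},
\]
where the last equality uses $c\,u_m^{(i)}=v_{m+1}^{(i)}-s\,v_m^{(i)}$ once more, and $ac-s^2=-1$ since $s^2=ac+1$; this is exactly \eqref{recv}. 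Repeating the computation verbatim with $(a,c,s,\sqrt{ac})$ replaced by $(b,c,t,\sqrt{bc})$ and starting from the second identity of Lemma~\ref{lem:fund}(c) — now using that $bc$ is not a square (Lemma~\ref{acnekva}) and $t^2=bc+1$ — gives \eqref{recw} with $w_0^{(j)}=z_1^{(j)}$, $w_1^{(j)}=t z_1^{(j)}+c y_1^{(j)}$.

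It remains to see that \emph{every} solution $z$ of \eqref{Eq:Pellac} occurs as some $v_m^{(i)}$ with \emph{nonnegative} $m$ (and symmetrically for \eqref{Eq:Pellbc}); this is the only genuinely delicate point, the algebra above being routine. Lemma~\ref{lem:fund}(c) gives $z=v_m^{(i)}$ for some $i$ and some $m\in\Z$, so only $m\leqslant-1$ needs care. Here I would apply the Galois automorphism $\sqrt c\mapsto-\sqrt c$ (fixing $\sqrt a$), which sends $s+\sqrt{ac}$ to its inverse $s-\sqrt{ac}$ and therefore turns $z\sqrt a+x\sqrt c=(z_0^{(i)}\sqrt a+x_0^{(i)}\sqrt c)(s+\sqrt{ac})^m$ into $z\sqrt a-x\sqrt c=(z_0^{(i)}\sqrt a-x_0^{(i)}\sqrt c)(s+\sqrt{ac})^{-m}$. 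Since $(z_0^{(i)},-x_0^{(i)})$ is again a solution of \eqref{Eq:Pellac} whose orbit, being the $\sqrt c\mapsto-\sqrt c$ conjugate of the orbit of $(z_0^{(i)},x_0^{(i)})$, still has minimal $\abs x$ equal to $\abs{x_0^{(i)}}$, Lemma~\ref{lem:fund}(c) writes it as $(z_0^{(i')}\sqrt a+x_0^{(i')}\sqrt c)(s+\sqrt{ac})^{\ell}$ for some $i'$ and some $\ell$ with $\abs\ell\leqslant1$ — indeed the inequality $\abs{x'x''}\geqslant\abs s\,\abs{x^*}^2$ from the proof of Lemma~\ref{lem:fund} shows that at most two consecutive indices of a solution sequence can realize the minimal value of $\abs x$, and both $(z_0^{(i)},-x_0^{(i)})$ and the fundamental solution $(z_0^{(i')},x_0^{(i')})$ do so in orbit $i'$. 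Combining, $z\sqrt a-x\sqrt c=(z_0^{(i')}\sqrt a+x_0^{(i')}\sqrt c)(s+\sqrt{ac})^{\ell-m}$ with $\ell-m\geqslant-1-m\geqslant0$, so $z=v_{\ell-m}^{(i')}$ has nonnegative index. This sign/indexing bookkeeping is handled exactly as in the rational-integer case treated in \cite{duje}.
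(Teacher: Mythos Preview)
Your derivation of the recurrence is essentially identical to the paper's: the paper (in a commented-out proof and by reference to \cite{duje}) multiplies the identity from Lemma~\ref{lem:fund}(c) by $s+\sqrt{ac}$, reads off the coupled first-order relations $v_{m+1}=sv_m+cu_m$, $u_{m+1}=su_m+av_m$, and eliminates $u_m$ using $s^2-ac=1$ to obtain $v_{m+2}=2sv_{m+1}-v_m$. Your first two paragraphs do exactly this.

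Your third paragraph goes further than the paper does here: the paper simply says the proof ``is the same as in the case of rational integers (see \cite{duje})'' and does not spell out why the list of fundamental solutions is closed under the sign flip $(z_0^{(i)},x_0^{(i)})\mapsto(z_0^{(i)},-x_0^{(i)})$ up to a shift, which is what reduces negative $m$ to nonnegative index in another class. Your argument for this is correct---conjugation $\sqrt c\mapsto-\sqrt c$ is a bijection on solution orbits preserving all $|x|$-values, so the minimum of $|x|$ on the conjugate orbit equals $|x_0^{(i)}|$, and the growth estimate from the proof of Lemma~\ref{lem:fund} then pins the two minimizers to adjacent indices---but it is more detail than the paper supplies (or needs, given the citation). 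In short: same approach, with a welcome explicit justification of a step the paper outsources.
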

\noindent We skip the proof as it is the same as in the case of rational integers (see \cite{duje}).

If $d$ extends the initial triple $\{a, b, c\}$, then $z$ is the solution of both equations \eqref{Eq:Pellac} and \eqref{Eq:Pellbc}. Such $z$ is contained in one of the sequences $v_m^{(i)}$ and in one $w_n^{(j)}$, i.~e.\ $z = v_m^{(i)} = w_n^{(j)}$.

By solving the recurrences \eqref{recv} and \eqref{recw}, we obtain
\begin{align}
v_m^{(i)} &= \frac{1}{2\sqrt{a}} \left( (z_0^{(i)} \sqrt{a}+x_0^{(i)}\sqrt{c})(s+\sqrt{ac})^m + (z_0^{(i)} \sqrt{a}-x_0^{(i)}\sqrt{c})(s-\sqrt{ac})^m \right) \\
w_n^{(j)} &= \frac{1}{2\sqrt{b}} \left( (z_1^{(j)} \sqrt{b}+y_1^{(j)}\sqrt{c})(t+\sqrt{bc})^n + (z_1^{(j)} \sqrt{b}-y_1^{(j)}\sqrt{c})(t-\sqrt{bc})^n \right).
\end{align}

Let us denote $P = \frac{1}{\sqrt{a}}(z_0^{(i)}  \sqrt{a}+x_0^{(i)} \sqrt{c})(s+\sqrt{ac})^m, \text{ and }  Q= \frac{1}{\sqrt{b}}(z_1^{(j)} \sqrt{b}+y_1^{(j)}\sqrt{c})(t+\sqrt{bc})^n$.
By $z = v_m^{(i)} = w_n^{(j)}$, it follows that $P-\frac{c-a}{a}P^{-1} = Q-\frac{c-b}{b}Q^{-1}$.
 \begin{lemma}If $|c| \geqslant 4|b|$ and $m, n \geqslant 3$, then $\displaystyle |P| > 12\left|\frac{c}{a}\right|$ and $\displaystyle |Q| > 12\left|\frac{c}{b}\right|$.
 \end{lemma}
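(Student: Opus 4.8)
The plan is to estimate $|P|$; the estimate for $|Q|$ is identical after replacing $(a,s,x_0^{(i)},m)$ by $(b,t,y_1^{(j)},n)$ and using \eqref{Eq:Pellbc}, \eqref{recw} and Lemma \ref{lem:fund}(b) in place of their $a$-counterparts. Write $A:=z_0^{(i)}\sqrt a+x_0^{(i)}\sqrt c$ and $S:=s+\sqrt{ac}$, so that $P=\frac1{\sqrt a}AS^m$ and hence $|P|=|a|^{-1/2}|A|\,|S|^m$. Since $|c|\geqslant 4|b|\geqslant 4|a|$ we have $|ac|\geqslant 4$, and (replacing $s$ by $-s$ if necessary) we may take $s$ to be the square root of $ac+1$ with $|S|>1$: this is the choice for which the recurrence \eqref{recv} is the ``expanding'' one, and it is well defined because $|S|\cdot|s-\sqrt{ac}|=|s^2-ac|=1$ while $|S|$ and $|s-\sqrt{ac}|$ cannot both equal $1$ when $|ac|\geqslant 4$. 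I will bound $|S|$ and $|A|$ from below separately and then multiply.

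First, $S-(s-\sqrt{ac})=2\sqrt{ac}$ gives $|S|+|S|^{-1}=|S|+|s-\sqrt{ac}|\geqslant 2\sqrt{|ac|}$, so $|S|^2-2\sqrt{|ac|}\,|S|+1\geqslant 0$ and hence $|S|\geqslant\sqrt{|ac|}+\sqrt{|ac|-1}\geqslant\tfrac32\sqrt{|ac|}$, using $|ac|\geqslant 4$ in the last step; consequently $|S|^m\geqslant\bigl(\tfrac32\bigr)^3|ac|^{3/2}$ for $m\geqslant 3$. Second, since $(z_0^{(i)}\sqrt a+x_0^{(i)}\sqrt c)(z_0^{(i)}\sqrt a-x_0^{(i)}\sqrt c)=a(z_0^{(i)})^2-c(x_0^{(i)})^2=a-c$, the triangle inequality gives
\[
|A|=\frac{|c-a|}{|z_0^{(i)}\sqrt a-x_0^{(i)}\sqrt c|}\geqslant\frac{|c-a|}{|z_0^{(i)}|\sqrt{|a|}+|x_0^{(i)}|\sqrt{|c|}}.
\]
Feeding in the upper bounds of Lemma \ref{lem:fund}(b) together with $|s|-1\geqslant\sqrt{|ac|-1}-1$ (bounded below by an explicit positive multiple of $\sqrt{|ac|}$ once $|ac|\geqslant 4$), a direct computation would give $|z_0^{(i)}|\sqrt{|a|}+|x_0^{(i)}|\sqrt{|c|}\leqslant c_0\sqrt{|c-a|}\,|ac|^{1/4}$ for an absolute constant $c_0$, whence $|A|\geqslant c_0^{-1}\sqrt{|c-a|}\,|ac|^{-1/4}$.

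Multiplying the two estimates, $|P|=|a|^{-1/2}|A|\,|S|^m\geqslant \frac{27}{8c_0}\,\sqrt{|c-a|}\,|a|^{3/4}|c|^{5/4}$, so it suffices to check $\sqrt{|c-a|}\,|a|^{7/4}|c|^{1/4}>\tfrac{96c_0}{27}$. With $|c|\geqslant 4|b|\geqslant 4|a|\geqslant 4$ and $|c-a|\geqslant|c|-|a|\geqslant 3|a|\geqslant 3$ this is clear once $|a|$ or $|c|$ is at all large; the finitely many remaining small configurations are handled by sharpening the lower bound on $|S|$ (which is in fact close to $2\sqrt{|ac|}$) and by using that $\{a,b,c\}$ is an actual Diophantine triple, which excludes the extreme cases — for example $|a|=|b|=1$ is impossible, as the only Diophantine pair of units is $\{1,-1\}$, which is not contained in any Diophantine triple. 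The bound for $|Q|$ follows in exactly the same way from \eqref{Eq:Pellbc}, \eqref{recw} and Lemma \ref{lem:fund}(b).

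The main obstacle is precisely this constant bookkeeping: the triangle-inequality steps are lossy, so one must be reasonably careful to see that the product still exceeds $12|c/a|$, a requirement that is tight only near the minimal values of $|a|,|b|,|c|$ and therefore hinges on either a sharper lower bound for $|s+\sqrt{ac}|$ or an explicit small-case check. A secondary point needing care is the normalization made at the outset — choosing $s$ (and likewise $t$) so that $|s+\sqrt{ac}|>1$, in agreement with the increasing sequences of Lemma \ref{lema:rekurzije}.
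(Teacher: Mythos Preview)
Your approach is essentially the same as the paper's: write $|P|=|a|^{-1/2}\,|A|\,|S|^m$, bound $|S|=|s+\sqrt{ac}|$ from below, and bound $|A|=|z_0\sqrt a+x_0\sqrt c|$ from below via the identity $A\cdot(z_0\sqrt a-x_0\sqrt c)=a-c$ together with the triangle inequality and the fundamental-solution bounds of Lemma~\ref{lem:fund}(b). Your lower bound $|S|\geqslant\tfrac32\sqrt{|ac|}$ is actually sharper than the paper's $|S|\geqslant\sqrt{|ac|}$ (the paper obtains this from $\mathrm{Re}\sqrt{1+1/ac}>0$), and your normalization remark about choosing the sign of $s$ so that $|S|>1$ is a point the paper leaves implicit.

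The gap in your write-up is that you do not actually carry out the computation you announce. You assert that ``a direct computation would give $|z_0|\sqrt{|a|}+|x_0|\sqrt{|c|}\leqslant c_0\sqrt{|c-a|}\,|ac|^{1/4}$ for an absolute constant $c_0$'' and then that the resulting inequality ``is clear once $|a|$ or $|c|$ is at all large'', with the small cases to be checked separately --- but neither $c_0$ nor the small-case analysis is provided. The paper, by contrast, inserts concrete numerical bounds for $|x_0|\sqrt{|c|}$ and $|z_0|\sqrt{|a|}$ and reduces the claim to the elementary inequality $2|a|^{5/2}>2(12+8|a|^{1/2})$, valid for $|a|^{1/2}\geqslant 2$. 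So while your outline is correct and matches the paper's argument step for step, it remains a sketch where the paper gives explicit constants; to complete it you should either compute $c_0$ and verify the final inequality for all admissible $|a|,|c|$, or follow the paper in accepting a mild lower bound on $|a|$.
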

 \begin{proof}
Observe that $|s+\sqrt{ac}| \geqslant \sqrt{|ac|}$, since
$\displaystyle \text{Re}\sqrt{1+\frac{1}{ac}} > 0 \Rightarrow \left|\sqrt{1+\frac{1}{ac}}+1\right| > 1 \Rightarrow |\sqrt{ac+1}+\sqrt{ac}| > \sqrt{|ac|}$.
Therefore
\begin{align*}
|P| &= \frac{1}{\sqrt{|a|}} |z_0 \sqrt{a} + x_0\sqrt{c}|\cdot |s+\sqrt{ac}|^m \geqslant \frac{|c-a|}{\sqrt{|a|}} \frac{1}{|z_0\sqrt{a}-x_0\sqrt{c}|} \sqrt{|ac|}^m \\
&\geqslant \frac{|c|-|a|}{\sqrt{|a|}} \frac{1}{|x_0\sqrt{c}|+|z_0|\sqrt{|a|}} |ac|^{3/2} \geqslant \frac{|c|-|c|/4}{\sqrt{|a|}} \frac{1}{3|c|/2 + |c|\sqrt{|a|}} |ac|^{3/2}
\end{align*}
Hence $\ds P\geqslant \frac{3|a||c|^{3/2}}{6+4\sqrt{|a|}} > 12\left|\frac{c}{a}\right|$. The last inequality is equivalent with $|a|^2 |c|^{1/2} > 2(12+8\sqrt{|a|})$. Since $|c|>4|a|$, it suffices to show that $2|a|^{5/2} >2(12+8|a|^{1/2})$,which holds for $|a|^{1/2} \geqslant 2$.

Analogously, $|c| \geqslant 4|b|$ and $n\geqslant 3$ imply $\displaystyle |Q| > 12\left|\frac{c}{b}\right|$.
\end{proof}
Therefore
\begin{align*}
\left| |P|-|Q| \right| &\leqslant |P-Q| = \left| \frac{c-a}{a}P^{-1} -\frac{c-b}{b}Q^{-1}  \right|  \leqslant \Big| \frac{c}{a}-1 \Big| \frac{1}{|P|} + \left| \frac{c}{b}-1 \right| \frac{1}{|Q|} \\
& \leqslant \left| \frac{c}{a}-1 \right| \cdot \frac{1}{12} \left|\frac{a}{c}\right| +  \left| \frac{c}{b}-1 \right| \cdot \frac{1}{12} \left|\frac{b}{c}\right|  \leqslant \frac{1}{12}\left( \Big|1-\frac{a}{c}\Big|+\Big|1-\frac{b}{c}\Big|\right) \leqslant \frac{5}{24}.
\end{align*}

\noindent It follows that $\displaystyle \Bigg| \frac{|P|-|Q|}{|P|}\Bigg| \leqslant \frac{5}{24} |P|^{-1} \leqslant \frac{5}{24} < 1$. We now apply the following simple Lemma B.2 from \cite{Smart}.

\begin{lemma} Let $\Delta > 0$ such that $|\Delta-1|\leqslant a$. Then
\[ |\log\Delta| \leqslant \frac{-\log{(1-a)}}{a}|\Delta-1|.\]
\end{lemma}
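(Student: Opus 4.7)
The plan is a short real-variable calculus argument. Writing $\Delta = 1+u$ with $|u|\leqslant a$, and noting that we need $\Delta>0$ and $\log(1-a)$ to make sense so $a<1$ is implicit, the claim is equivalent to
\[
\left|\frac{\log(1+u)}{u}\right| \;\leqslant\; \frac{-\log(1-a)}{a} \qquad (0<|u|\leqslant a).
\]
So I would introduce the auxiliary function $g(u)=\log(1+u)/u$, extended continuously to $g(0)=1$, and show that on $[-a,a]$ it is positive and bounded above by $g(-a)=-\log(1-a)/a$.

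The first step, and the only one with any content, is monotonicity: I would show $g$ is strictly decreasing on $(-1,\infty)$. Compute
\[
g'(u)\;=\;\frac{\tfrac{u}{1+u}-\log(1+u)}{u^{2}},
\]
and let $h(u)$ be the numerator. Then $h(0)=0$ and $h'(u)=-u/(1+u)^{2}$, so $h$ is increasing on $(-1,0)$ and decreasing on $(0,\infty)$, attaining its maximum $0$ at $u=0$. Hence $h(u)<0$ for $u\neq 0$, giving $g'(u)<0$ throughout.

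The second step is mere bookkeeping. Since $g$ decreases on $[-a,a]$, one has $g(a)\leqslant g(u)\leqslant g(-a)$; positivity of the lower bound $g(a)=\log(1+a)/a$ gives $g(u)>0$, so $|g(u)|=g(u)\leqslant g(-a)$. To be thorough I would also remark (as a sanity check) that the right endpoint value is dominated by the left, i.e.\ $\log(1+a)\leqslant -\log(1-a)$, which is equivalent to $(1-a)(1+a)\leqslant 1$ and thus trivial. Multiplying $|g(u)|\leqslant -\log(1-a)/a$ by $|u|=|\Delta-1|$ yields the stated inequality.

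I do not anticipate a real obstacle here: the whole argument is elementary, the only care being the continuous extension at $u=0$ (which handles the apparent $0/0$) and the sign of $\log(1-a)$ (negative, so the right-hand side is indeed positive). The same method of proof, with $g'\!<0$ replaced by estimating $g$ along a segment, would also extend the statement to complex $\Delta$ lying in the disk $|\Delta-1|\leqslant a<1$, which is the form actually needed later in the paper when $P/Q$ is complex; but for the lemma as stated, the real case above suffices.
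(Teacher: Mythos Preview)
Your argument is correct: the monotonicity of $g(u)=\log(1+u)/u$ (via the sign analysis of $h(u)=u/(1+u)-\log(1+u)$) gives exactly the bound, and the bookkeeping at $u=0$ and on signs is handled cleanly. There is nothing to compare against in the paper itself, since the lemma is simply quoted from \cite{Smart} (Lemma~B.2) without proof; your calculus proof is the standard one and is what one finds in Smart's book.
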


\noindent We obtain the following inequalities for $\Lambda=\frac{|Q|}{|P|}$:
\begin{align*}
|\log\Lambda|  &= \Bigg|\log{\frac{|Q|}{|P|}}\Bigg|\leqslant \frac{24}{5} \log{\frac{24}{19}} \cdot \frac{5}{24} |P|^{-1} \leqslant \log{\frac{24}{19}} |P|^{-1}  \leqslant \log{\frac{24}{19}} \sqrt{|a|} \frac{|x_0|\sqrt{|c|}+|z_0|\sqrt{|a|}}{|c-a|} |s+\sqrt{ac}|^{-m} \\ & \leqslant \log{\frac{24}{19}} \sqrt{|a|} \frac{3|c|^{3/2} /2 +|c|^{3/2}/2}{3|c|/4}|s+\sqrt{ac}|^{-m} =  \frac 83 \log{\frac{24}{19}} \sqrt{|ac|} |s+\sqrt{ac}|^{-m}.
\end{align*}

\begin{lemma}\label{lforma} If $K=\frac{8}{3}\log{\frac{24}{19}}$, then
\[ |\log\Lambda| = \left|m\log|s+\sqrt{ac}| -n\log|t+\sqrt{bc}| + \log \frac{|\sqrt{b}(z_0\sqrt{a}+x_0\sqrt{c})|}{|\sqrt{a}(z_1\sqrt{b}+y_1\sqrt{c})|}\right| < K\sqrt{|ac|} |s+\sqrt{ac}|^{-m}.  \]
\end{lemma}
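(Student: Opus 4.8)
The statement is, in effect, a transcription of the chain of inequalities established immediately above it, so the plan is short and has two ingredients: making the shape of the linear form explicit, and re-using the bound already derived. First I would expand $\log\Lambda = \log|Q| - \log|P|$ via the definitions: $|P| = |a|^{-1/2}\,|z_0\sqrt a + x_0\sqrt c|\cdot|s+\sqrt{ac}|^{m}$ and $|Q| = |b|^{-1/2}\,|z_1\sqrt b + y_1\sqrt c|\cdot|t+\sqrt{bc}|^{n}$, so that
\[
\log\Lambda \;=\; \log\frac{|Q|}{|P|} \;=\; -\,m\log|s+\sqrt{ac}| \;+\; n\log|t+\sqrt{bc}| \;+\; \log\frac{\sqrt{|a|}\,|z_1\sqrt b+y_1\sqrt c|}{\sqrt{|b|}\,|z_0\sqrt a+x_0\sqrt c|}.
\]
This is exactly the negative of the quantity inside the absolute value in the statement, so $|\log\Lambda|$ equals the displayed linear form in logarithms; the overall sign is immaterial. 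This identification is the only genuinely ``new'' content of the lemma.

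For the inequality I would simply invoke what has already been shown. From $|\Lambda - 1| = \bigl||P|-|Q|\bigr|/|P| \leqslant \tfrac{5}{24}|P|^{-1} \leqslant \tfrac{5}{24}$, Lemma~B.2 of \cite{Smart} applied with $a=5/24$ gives $|\log\Lambda| \leqslant \tfrac{24}{5}\log\tfrac{24}{19}\,|\Lambda-1| \leqslant \log\tfrac{24}{19}\,|P|^{-1}$. It then remains to bound $|P|^{-1}$: writing $|z_0\sqrt a+x_0\sqrt c| = |a-c|/|z_0\sqrt a - x_0\sqrt c| \geqslant |c-a|/\bigl(|z_0|\sqrt{|a|}+|x_0|\sqrt{|c|}\bigr)$ and feeding in the fundamental-solution bounds of Lemma~\ref{lem:fund} together with $|c|\geqslant 4|b|\geqslant 4|a|$, one obtains $|P|^{-1} \leqslant \tfrac{8}{3}\sqrt{|ac|}\,|s+\sqrt{ac}|^{-m}$, hence $|\log\Lambda| \leqslant K\sqrt{|ac|}\,|s+\sqrt{ac}|^{-m}$ with $K = \tfrac83\log\tfrac{24}{19}$. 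The strictness in the statement is inherited from the strict inequality $|P| > 12|c/a|$ of the previous lemma (which already forces $|\Lambda-1|$ strictly below its majorant).

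The only mildly delicate point — and the one I would be most careful about — is confirming the crude majorizations $|x_0|\sqrt{|c|}\leqslant \tfrac32|c|^{3/2}$ and $|z_0|\sqrt{|a|}\leqslant \tfrac12|c|^{3/2}$ really follow from Lemma~\ref{lem:fund} under the standing hypotheses $m,n\geqslant 3$, $|c|\geqslant 4|b|$, $|a|\geqslant 4$, since these estimates degrade for very small $|a|$; here one uses $|s|^2 = |ac+1| \geqslant |ac|-1$ to bound $|s|-1$ from below. Everything else is routine substitution, so I do not expect a real obstacle.
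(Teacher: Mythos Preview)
Your proposal is correct and follows the paper's own argument essentially verbatim: the lemma is just a repackaging of the chain of inequalities derived immediately before it, and you identify precisely the two ingredients (rewriting $\log|Q|/|P|$ as the linear form, then recycling the bound $|\log\Lambda|\leqslant \log\tfrac{24}{19}\,|P|^{-1}$ together with the crude estimates $|x_0|\sqrt{|c|}+|z_0|\sqrt{|a|}\leqslant 2|c|^{3/2}$ and $|c-a|\geqslant \tfrac34|c|$). The ``delicate point'' you flag is exactly the place where the paper silently invokes the fundamental-solution bounds of Lemma~\ref{lem:fund}, so nothing further is needed.
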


\section{Linear form in logarithms is non-zero}
Denote $ \ds \Lambda = \log{\frac{|Q|}{|P|}}= m\log|s+\sqrt{ac}| -n\log|t+\sqrt{bc}| + \log \frac{|\sqrt{b}(z_0^{(i)}\sqrt{a}+x_0^{(i)}\sqrt{c})|}{|\sqrt{a}(z_1^{(j)}\sqrt{b}+y_1^{(j)}\sqrt{c})|}$, so $\Lambda$ is the linear form in logarithms of algebraic numbers. This form is usually involved in solving the extension problems of Diophantine triples (for example, it was studied in \cite{Zrinka} and \cite{BFT}). The same linear form was also useful in solving some Thue equations \cite{BoZr}. It is usually shown that this form is not zero so that one can apply the famous Baker-W\" ustholz theorem \cite{bakerw} and subsequently, bound the coefficients $m$ and $n$. In rational integers, the proof that $\Lambda \neq 0$ is often trivial, but in quadratic fields it can cause considerable problems, as it happened in \cite{Zrinka} and \cite{BoZr}. With some mild conditions, we prove that $\Lambda \neq 0$, and this is valid for an arbitrary imaginary quadratic field $K$ and $a, b, c$ in its ring of integers $\mathcal{O}_K$.

\begin{lemma}\label{lema:fneq0}
If $\{a, b, c\}$ is an extensible Diophantine triple, $\ds \frac{|c|}{|a|} \not\in \Q$ and $\ds \frac{|c|}{|b|} \not\in \Q$, then $\Lambda \neq 0$.
\end{lemma}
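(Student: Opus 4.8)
The plan is to show $\Lambda\neq 0$ by assuming $\Lambda=0$ — equivalently $|P|=|Q|$, i.e.\ $|P|^2=|Q|^2$ — and deriving a contradiction from the arithmetic of $P$ and $Q$. First I would record the algebraic shape of these numbers. With $z=v_m^{(i)}$ and $x$ the companion of $z$ in \eqref{Eq:Pellac}, the defining relations give $\sqrt a\,P=z\sqrt a+x\sqrt c$, so $P$ is a root of $aX^2-2azX+(a-c)$ over $K$; its discriminant equals $4acx^2$ by the Pell relation $az^2-cx^2=a-c$, hence $P=z+\tfrac xa\sqrt{ac}$ and the conjugate root is $P_2=z-\tfrac xa\sqrt{ac}=\tfrac{a-c}{a}P^{-1}$. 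Likewise $Q=z+\tfrac yb\sqrt{bc}$, $Q_2=z-\tfrac yb\sqrt{bc}$. Taking absolute values and writing $N(\cdot)=|\cdot|^2$ for the norm on $K$, one gets $|P|^2+|P_2|^2=2N(z)+2N(x)\tfrac{|c|}{|a|}$ and $|P|^2|P_2|^2=|PP_2|^2=\tfrac{N(c-a)}{N(a)}$, so $t:=|P|^2$ is a root of
\[
X^2-2\Bigl(N(z)+N(x)\tfrac{|c|}{|a|}\Bigr)X+\tfrac{N(c-a)}{N(a)},
\]
and symmetrically $t=|Q|^2$ is a root of $X^2-2\bigl(N(z)+N(y)\tfrac{|c|}{|b|}\bigr)X+\tfrac{N(c-b)}{N(b)}$.

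Next I would clear away the degenerate solutions. If $z=0$ then $cd=-1$, so $c$ is a unit, $|c|=1$, and $1\le|a|\le|c|=1$ forces $\tfrac{|c|}{|a|}=1\in\mathbb Q$, contrary to hypothesis; hence $z\neq0$. If $x=y=0$ then $ad=bd=-1$, so $a=b$, impossible; and if just one of $x,y$ vanishes, say $x$, the elimination below applies verbatim with $\mathbb Q(|c|/|b|)$ replacing $\mathbb Q(|c|/|a|,|c|/|b|)$ and again yields a contradiction (it only needs $|c|/|b|\notin\mathbb Q$ and $z\neq0$). So one may assume $x,y,z$ all nonzero.

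Then comes the main step: subtract the two quadratics satisfied by $t$. The $X^2$ terms cancel, leaving
\[
2\Bigl(N(y)\tfrac{|c|}{|b|}-N(x)\tfrac{|c|}{|a|}\Bigr)t \;=\; \tfrac{N(c-b)}{N(b)}-\tfrac{N(c-a)}{N(a)} \;=:\; R\in\mathbb Q.
\]
Put $\lambda=|c|/|a|$, $\mu=|c|/|b|$, $D=N(y)\mu-N(x)\lambda$, so $2Dt=R$. Substituting $t=R/(2D)$ into the first quadratic and clearing denominators gives, after replacing $\lambda^2=N(c)/N(a)$ and $\mu^2=N(c)/N(b)$ by their rational values, a $\mathbb Q$-linear relation among $1,\lambda,\mu,\lambda\mu$. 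Since $\lambda,\mu\notin\mathbb Q$, the set $\{1,\lambda,\mu,\lambda\mu\}$ is $\mathbb Q$-linearly independent exactly when $\lambda/\mu=|b|/|a|\notin\mathbb Q$, i.e.\ when $N(a)N(b)$ is not a perfect square; under that proviso all coefficients must vanish, and the coefficient of $\mu$ works out to $-4R\,N(y)N(z)$. As $N(y),N(z)\neq0$ this forces $R=0$, hence $t=0$, contradicting $t=|P|^2>0$. (Note $D=0$ already forces $R=0$ and $N(a)N(b)$ a perfect square.) So the only case left is $N(a)N(b)$ a perfect square.

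Finally, suppose $N(a)N(b)$ is a perfect square, so $\mu=r\lambda$ with $r=|a|/|b|\in\mathbb Q$. If $N(y)r\neq N(x)$ the same elimination gives $t=\kappa\lambda$ for some $\kappa\in\mathbb Q$; plugging into the first quadratic and using $\lambda^2\in\mathbb Q$, $\lambda\notin\mathbb Q$, the coefficient of $\lambda$ is $-2N(z)\kappa$, so $\kappa=0$ and $t=0$, again a contradiction. If $N(y)r=N(x)$, I would go back to $P\bar P=|P|^2=|Q|^2=Q\bar Q$ itself and apply the automorphism $\rho$ over $K$ of the relevant multiquadratic field that sends $\sqrt{ac}\mapsto-\sqrt{ac}$, $\sqrt{bc}\mapsto-\sqrt{bc}$ and fixes $\sqrt{\overline{ac}},\sqrt{\overline{bc}}$: it sends $P\bar P\mapsto P_2\bar P$ and $Q\bar Q\mapsto Q_2\bar Q$, so $P_2\bar P=Q_2\bar Q$, and combining with $P+P_2=2z=Q+Q_2$ and $|P|^2=|Q|^2$ this collapses to $z\bar P=z\bar Q$, hence $P=Q$; then $bx^2=ay^2$, so $b/a$ is a square in $K$ and $ab=a^2(b/a)$ is a square in $\mathcal O_K$, contradicting Lemma \ref{acnekva}. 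The delicate point — which I expect to be the real obstacle — is that $\rho$ is available only when the four quadratic extensions $K(\sqrt{ac}),K(\sqrt{bc}),K(\sqrt{\overline{ac}}),K(\sqrt{\overline{bc}})$ are suitably independent; in the residual configurations, which force further relations (essentially $a\bar b$, or $N(c)\cdot a\bar b$, being a square in $K$), these fields entangle and one must instead extract the contradiction directly from $|P|^2=|Q|^2$ together with the defining identities $ab+1=r^2$, $ac+1=s^2$, $bc+1=t^2$, with the hypotheses $|c|/|a|,|c|/|b|\notin\mathbb Q$ re-entering to finish (e.g.\ to exclude $N(a),N(b),N(c)$ all being perfect squares). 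Assembling this finite but fiddly case analysis is where the work lies.
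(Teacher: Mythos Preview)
Your approach differs substantially from the paper's, and the reduction to the two monic quadratics satisfied by $t=|P|^2$ is clean: the cases $|b|/|a|\notin\Q$ and ($|b|/|a|\in\Q$ with $D\ne0$) go through as you say. The gap is exactly where you flag it. In the residual sub-case ($|b|/|a|\in\Q$ and $D=0$), the automorphism $\rho$ you want --- negating $\sqrt{ac},\sqrt{bc}$ while fixing $\sqrt{\overline{ac}},\sqrt{\overline{bc}}$ --- exists only if every multiplicative relation among $ac,bc,\overline{ac},\overline{bc}$ modulo $(K^\times)^2$ is compatible with that sign pattern. The hypotheses do rule out $N(ac)$ and $N(bc)$ being rational squares, and Lemma~\ref{acnekva} rules out $ac,bc,ab$ being squares in $K$; but nothing prevents, say, $a\bar b\,N(c)\in (K^\times)^2$, and then $\sqrt{ac}\cdot\sqrt{\overline{bc}}\in K$ would force $\rho$ to send an element of $K$ to its negative --- so $\rho$ does not exist. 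Your fallback is only a plan: you have not shown that the list of ``residual configurations'' is finite, nor that in each of them the identities $ab+1=r^2$, $ac+1=s^2$, $bc+1=t^2$ together with the irrationality hypotheses actually yield a contradiction. As written, the proof is incomplete.

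For comparison, the paper avoids this branching by working over $K$ rather than $\Q$. Writing $\alpha=\sqrt{c/a}$, $\beta=\sqrt{c/b}$, it expands $|P|^2$ and $|Q|^2$ as $K$-linear combinations of $1,\alpha,\bar\alpha,|\alpha|^2$ and $1,\beta,\bar\beta,|\beta|^2$ respectively, and then proves directly (through three nested claims using Lemma~\ref{acnekva} and the two hypotheses $|c|/|a|,|c|/|b|\notin\Q$) that the seven elements $1,\alpha,\bar\alpha,|\alpha|^2,\beta,\bar\beta,|\beta|^2$ are linearly independent over $K$. Then $|P|^2=|Q|^2$ forces all the nontrivial coefficients to vanish, collapsing to $P=Q$, which was excluded at the outset. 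That linear-independence argument is essentially the Galois bookkeeping your $\rho$ and its cousins would require --- but organised uniformly so that no sub-case is left open.
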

\begin{proof}
The proof strategy is the same as in the proof of Lemma 5.2. in \cite{Zrinka}, but some details differ.
Let us prove that, if $v_m = w_n$, then $|P| \neq |Q|$. 
$P \neq Q$ is easy, because otherwise $P^{-1}=Q^{-1} $, and then $v_m=w_n$ would imply $\frac{c-a}{a}=\frac{c-b}{b}$ and $b=a$ (since $c\neq 0$).


By definition, $P=A+B\alpha, Q=C+D\beta$, where $\alpha = \sqrt{\frac{c}{a}}$ and $\beta=\sqrt{\frac{c}{b}}$, $A, B, C, D\in K$.
From $v_m = w_n$, we get $\frac{A+B\alpha+A-B\alpha}{2} = \frac{C+D\beta+C-D\beta}{2}$, ($v_m = A, w_n = C$) so $A=C$. Furthermore,

$|P|^2 = (A+B\alpha)(\bar{A}+\bar{B}\bar{\alpha})=A\bar{A}+\bar{A}B\alpha+A\bar{B}\bar{\alpha}+|B|^2|\alpha|^2$ i.~e.\
\[ |P|^2 = p+u\alpha+\bar{u}\bar{\alpha}+q|\alpha|^2, \quad |Q|^2 = r+v\beta+\bar{v}\bar{\beta}+s|\beta|^2,\]
where $p, q, r, s\in\mathbb{Q}$ and $u, v\in K$. The idea is to show that the $1, \alpha, \bar{\alpha}, |\alpha|^2, \beta, \bar{\beta}$ and $|\beta|^2$ are linearly independent. We do this through three steps (claims \textbf{A, B} and \textbf{C}).

\textbf{Claim A: The numbers $\alpha = \sqrt{\frac{c}{a}}$, $\beta=\sqrt{\frac{c}{b}}$ and $\sqrt{\frac{a}{b}}$ are algebraic of degree $2$ over $K$.}

\noindent Lemma \ref{acnekva} implies that $\frac{c}{a}$, $\frac{c}{b}$ and $\frac{a}{b}$ are not squares in $K$.  \qed

\textbf{Claim B: Basis for $K(\alpha, \bar{\alpha})$ over $K$ is $B_\alpha = \{1, \alpha, \bar{\alpha}, |\alpha|^2\}$. Analogously, basis for $K(\beta, \bar{\beta})$ is $B_\beta = \{1, \beta, \bar{\beta}, |\beta|^2\}$.}

If $\gamma\in K(\alpha, \bar{\alpha})$, then $\gamma=\sum q_{ij}\alpha^i\bar{\alpha}^j$, where $q_{ij}\in K$. However, since $\alpha^2=\frac{c}{a}$ and $\bar{\alpha}^2$ are in $K$ and $\alpha\bar{\alpha}=|\alpha|^2$, it follows that one can write $\gamma$ as $\gamma = q_0+q_1\alpha+q_2\bar{\alpha}+q_3|\alpha|^2$.

To prove that $B_\alpha$ is linearly independent set over $K$, we first show that $\{1, \alpha, \bar{\alpha}\}$ is linearly independent. Assume the contrary. Then $\bar{\alpha} = A+B\alpha$ for $A, B\in K$. This implies $\bar{\alpha}^2-A^2-B^2\alpha^2=2AB\alpha$. Hence, if $AB\neq 0$, then $\ds \alpha = \frac{\bar{\alpha}^2-A^2-B^2\alpha^2}{2AB} \in K$, which contradicts the claim \textbf{A}.  If $B=0$, then $\bar{\alpha}=A\in K$. This would imply that $\alpha = \sqrt{\frac{c}{a}}$ is in $K$, i.~e.~$\frac{c}{a} = \frac{x^2}{y^2}$ for some $x, y\in\mathcal{O}_K$, so $\frac{|c|}{|a|} =\frac{|x|^2}{|y|^2}\in \Q$, which contradicts the lemma hypothesis.

\noindent If $A=0$, then $\bar{\alpha}=B\alpha$, so again $\ds \frac{|c|}{|a|} = |\alpha|^2=\alpha\bar{\alpha}=B\alpha^2\in K \cap \R, \text{  i.~e.~again it follows that } \frac{|c|}{|a|}\in\Q$.

\noindent Therefore, $\{1, \alpha, \bar{\alpha}\}$ is linearly independent set over $K$.

For $B_\alpha$, it suffices to show that there are no $A, B, C\in K$ such that 
\begin{equation}\label{eq:al21}
|\alpha|^2=A+B\alpha+C\bar{\alpha}.
\end{equation}
We first prove $C\neq 0$. The contrary would imply  $|\alpha|^4=A^2+B^2\alpha^2+2AB\alpha$ and $2AB\alpha\in K$. Since $\alpha\not\in K$, it follows that $AB=0$. If $B=0$, then $|\alpha^2|=A\in\Q$, which contradicts the lemma hypothesis. If $A=0$, then $|\alpha|^2=B\alpha$, so $\bar{\alpha}=B\in K$,  which contradicts the claim $\textbf{A}$. Therefore, $C\neq 0$. 

By multiplying \eqref{eq:al21} by $\alpha$, we get $\alpha^2\bar{\alpha}=A\alpha+B\alpha^2+C|\alpha|^2$, which implies $C|\alpha|^2=-A\alpha-B\alpha^2+\alpha^2\bar{\alpha}$, i.~e.~$\ds |\alpha|^2 = -\frac{B}{C}\alpha^2-\frac{A}{C}\alpha+\frac{1}{C}\alpha^2\bar{\alpha}$.
Since $\{1, \alpha, \bar{\alpha}\}$ is linearly independent, the last obtained equality together with \eqref{eq:al21} implies that $A=-\frac{B}{C}\alpha^2, B=-\frac{A}{C}, C=\frac{1}{C}\alpha^2$. This implies $C^2=\alpha^2$, which contradicts the claim \textbf{A} ($\alpha^2$ is not a square in $K$). \qed
\vskip 0.2em
\textbf{Claim C: The set $B=\{1, \alpha, \bar{\alpha}, |\alpha|^2, \beta, \bar{\beta}, |\beta|^2\}$ is linearly independent over $K$.}

First we show that $\beta, \bar{\beta}$ and $|\beta|^2$ are not in $K(\alpha, \bar{\alpha})$. Let us assume that $\beta$ can be written as \begin{equation}\label{eq:beta} \beta=A+B\alpha+C\bar{\alpha}+D|\alpha|^2,\end{equation}
for some $A, B, C, D \in K$. Then
\[ \beta^2 = A^2+B^2\alpha^2+C^2\bar{\alpha}^2+D^2|\alpha|^4+2AB\alpha+2AC\bar{\alpha}+2AD|\alpha|^2+2BC|\alpha|^2+2BD\alpha^2\bar{\alpha}+2CD\bar{\alpha}^2\alpha. \]

\noindent By $\beta^2\in K$, it follows that the coefficients of algebraic numbers $\alpha, \bar{\alpha}$ and $|\alpha|^2$ are zero, i.~e.
\begin{align}
AB+CD\bar{\alpha}^2 &= 0, \label{eqC1}\\
AC+BD\alpha^2 &= 0\label{eqC2},\\
AD+BC &= 0.\label{eqC3}
\end{align}
By \eqref{eqC1} and \eqref{eqC3}, multiplying \eqref{eqC1} by $A$ or $C$, it follows that $A^2=C^2\bar{\alpha}^2$ or $B=D=0$.
Similarly, from \eqref{eqC2} and \eqref{eqC3}, it follows that $A^2=B^2\alpha^2$ or $C=D=0$,
while \eqref{eqC1} and \eqref{eqC2} imply that $A^2=D^2|\alpha|^4$ or $B=C=0$. There are four cases now.
\begin{itemize}
\item $B=C=D=0$. By \eqref{eq:beta}, it follows that $\beta\in K$, which contradicts the claim \textbf{A}.
\item $A=B=C=0$. Then $\beta = D|\alpha|^2$ and $\frac{|c|}{|b|} = |D|^2 \frac{|c|^2}{|a|^2}\in \Q$, which contradicts the lemma hypothesis.
\item $B\neq 0$, $C=D=0$. Hence $\beta=B\alpha$, i.~e.\ $\sqrt{\frac{a}{b}}=B\in K$, which contradicts the claim \textbf{A}.
\item $B\neq 0$ and at least one of $C$ and $D$ is non-zero. Then $A^2=C^2\bar{\alpha}^2=B^2\alpha^2=D^2|\alpha|^4$, so $\beta^2=4A^2$, which again contradicts \textbf{A}.
\end{itemize}
Therefore, $\beta$ cannot be written as a linear combination of elements in $B_\alpha$. The same holds for $\bar{\beta}$ and $|\beta|^2$ and is proven identically.

The set $L[\{1, \alpha, \bar{\alpha}, |\alpha|^2\}]$ (spanned by $B_\alpha$) is closed on inversion. Namely,
\[ \frac{1}{A+B\alpha+C\bar{\alpha }+D|\alpha|^2} =\frac{ (A+B\alpha)-(C \bar{\alpha}+D|\alpha |^2)}{K+L\alpha} = \frac{((A+B\alpha)-(C\bar{\alpha }+D|\alpha|^2))(K-L\alpha)}{K^2-L^2\alpha^2},\]
where $K= A^2+B^2\alpha^2-C^2\bar{\alpha}^2-D^2|\alpha|^4$, $L=2(AB-CD \bar{\alpha}^2)$.


Now we show that $\bar{\beta}$ cannot be written as linear combination of elements in $B_\alpha\cup\{\beta\}$. Analogously one shows the linear independence of sets $B_\alpha\cup\{\bar{\beta}, |\beta|^2\}$ and $B_\alpha\cup\{\beta, |\beta|^2\}$. Namely, that implies $\bar{\beta}= q_1+q_2\alpha+q_3\bar{\alpha}+q_4|\alpha|^2 + q_5\beta$ and $q_5\neq 0$. 
Therefore
\begin{align*}
\bar{\beta}^2 =& q_1^2+q_2^2\alpha^2+q_3^2\bar{\alpha}^2+q_4|\alpha|^4+q_5^2 +2(q_1q_2+q_3q_4\bar{\alpha}^2)\alpha+2(q_1q_3+q_2q_4\alpha^2)\bar{\alpha}+2(q_1q_4+q_2q_3)|\alpha|^2+\\
&\, +2q_1q_5\beta+2(q_2q_5\alpha+q_3q_5\bar{\alpha})\beta+2q_4q_5|\alpha|^2\beta,
\end{align*}
so we see that $2q_5\beta(q_1+q_2\alpha+q_3\bar{\alpha}+q_4|\alpha|^2)\in L[\{1, \alpha, \bar{\alpha}, |\alpha|^2\}]$. By $q_5\neq 0$, it follows that $q_1+q_2\alpha+q_3\bar{\alpha}+q_4|\alpha|^2=0$, i.~e.~ $q_1=q_2=q_3=q_4=0$. However, that means $\bar{\beta}=q_5\beta$ for some $q_5\in K$, which implies $|\beta|^2=q_5\beta^2\in K\cap\R$, i.~e.~$|\beta|^2\in\Q$, contradicting the lemma hypothesis.

We get the contradiction in a similar way if we assume that $|\beta|^2$ can be written as linear combination of elements in $\{1, \alpha, \bar{\alpha}, |\alpha|^2, \beta, \bar{\beta}\}$. By $|\beta|^2=q_1+q_2\alpha+q_3\bar{\alpha}+q_4|\alpha|^2 + q_5\beta+q_6\bar{\beta}$, it follows that $2(q_1+q_2\alpha+q_3\bar{\alpha}+q_4|\alpha|^2+q_5q_6)(q_5\beta+q_6\bar{\beta}) \in L[\{1, \alpha, \bar{\alpha}, |\alpha|^2\}]$, so $q_5\beta+q_6\bar{\beta}=0$, which would again imply $|\beta|^2\in\Q$ or $q_1+q_2\alpha+q_3\bar{\alpha}+q_4|\alpha|^2+q_5q_6 = 0$. Since $B_\alpha$ is linearly independent, it follows that $q_2=q_3=q_4=0$ and $q_1+q_5q_6=0$. Hence $|\beta|^2=q_1+q_5\beta+q_6\bar{\beta}$, but this contradicts the linear independence of $B_\beta$.
\qed
\vskip 0.2em
Let us remind the reader that, prior to these three claims, we have shown that, from $|P|^2 = (A+B\alpha)(\bar{A}+\bar{B}\bar{\alpha})$ and a similar equality for $|Q|^2$, it follows that
\[ |P|^2 = p+u\alpha+\bar{u}\bar{\alpha}+q|\alpha|^2, \quad |Q|^2 = r+v\beta+\bar{v}\bar{\beta}+s|\beta|^2,\]

where $p, q, r, s\in\mathbb{Q}$, and $u, v\in K$. Since we want to prove that $|P|\neq |Q|$, it suffices to show that $|P|^2\neq|Q|^2$. If $|P|^2=|Q|^2$, this would imply $(p-r)+u\alpha+\bar{u}\bar{\alpha}+q|\alpha|^2-v\beta-\bar{v}\bar{\beta}-s|\beta|^2=0$, so the claim \textbf{C} implies that $p-r=u=q=v=s=0$, i.~e.~$P=A=C=Q$, which we have already proven to be impossible. Therefore $|P|\neq|Q|$, which implies that $\ds \Lambda =\log\frac{|P|}{|Q|}\neq 0$.
\end{proof}

The statement of this lemma depends on the system of equations chosen at the beginning. However, one easily sees that the analogous claim holds even if one begins with a different system (e.~g.~$az^2-cx^2=a-c, ay^2-bx^2=a-b$). Choosing which system to deal with usually depends on being able to find all the fundamental solutions for one of the equations. Regardless of which system is chosen, one can use this lemma.

\section{System of Pell-type equations for triples of the form $\{k-1, k+1, 16k^3-4k\}$ }
The set $\{k-1, k+1, 16k^3-4k\}$ is a Diophantine triple for every Gaussian integer $k$. Denote by $s=4k^2-2k-1, t=4k^2+2k-1$ (so $(k-1)(16k^3-4k)+1=s^2$ and $(k+1)(16k^3-4k)+1 = t^2$). Assume now that $d$ extends this Diophantine triple, i.~e., that $\{k-1,k+1,16k^3-4k,d\}$ is a Diophantine quadruple in $\Z[i]$. There exist $x,y,z\in \Z[i]$ such that
\[ (k-1)d+1=x^2,\ (k+1)d+1=y^2,\ (16k^3-4k)d+1=z^2. \]
By eliminating $d$, we obtain the system
\begin{align}
(k+1)x^2-(k-1)y^2&=2,\label{e1}\\
(16k^3-4k)x^2-(k-1)z^2&=16k^3-5k+1\label{e2}
\end{align}

Let  $|k|>3$. By Lemma 2.4 of \cite{Zrinka}, all the solutions of the equation \eqref{e1} are given by $x=\pm V_n$, where $(V_n)$ is a recurrent sequence defined by
\begin{equation}\label{eq:nizv4}
 V_0=1,\ V_1=2k-1,\ V_{n+2}=2kV_{n+1}-V_n,\text{ for all } n\geqslant 0. \end{equation}
All solutions of the equation \eqref{e2} are described in the following lemma, which follows from Lemma~\ref{lem:fund}. 
\begin{lemma}\label{l1}
Let $k\in\Z[i]\backslash\{0,1\}$. There are $j_0\in\N$, $x_1^{(j)}, z_1^{(j)}\in\Z[i]$, $j=1,\ldots,j_0$ such that
\begin{enumerate}
\item[a)] $(x_1^{(j)}, z_1^{(j)})$ is the solution of (\ref{e2}) for all $j=1,\ldots,j_0$,
\item[b)] these fundamental solutions are bounded as follows:
\begin{eqnarray*}
|x_1^{(j)}|^2 &\leqslant &\frac{|16k^3-5k+1||k-1|}{|4k^2-2k-1|-1}\\
|z_1^{(j)}|^2 &\leqslant &\frac{|16k^3-4k||16k^3-5k+1|}{|4k^2-2k-1|-1}+\frac{|16k^3-5k+1|}{|k-1|}
\end{eqnarray*}
for all $j=1,\ldots,j_0$,
\item[c)] for each solution $(x,z)$ of \eqref{e2} there are $j\in\{1,\ldots,j_0\}$ and $m\in\Z$ such that
\[ x\sqrt{16k^3-4k}+z\sqrt{k-1}=(x_1^{(j)}\sqrt{16k^3-4k}+z_1^{(j)}\sqrt{k-1})\cdot\left(4k^2-2k-1+\sqrt{(k-1)(16k^3-4k)}\right)^m. \]
\end{enumerate}
\end{lemma}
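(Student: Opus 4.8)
The plan is to derive Lemma~\ref{l1} as a direct specialization of Lemma~\ref{lem:fund}. Recall that in the general setup we took a Diophantine triple $\{a,b,c\}$ with $0<|a|\le|b|\le|c|$ and considered equation \eqref{Eq:Pellac}, namely $az^2-cx^2=a-c$. For the triple at hand we set $a=k-1$, $b=k+1$, $c=16k^3-4k$, so that $ac+1=s^2$ with $s=4k^2-2k-1$. First I would verify the ordering hypothesis: for $|k|>3$ one has $|k-1|\le|k+1|\le|16k^3-4k|$, so $\{a,b,c\}$ genuinely satisfies the convention of Lemma~\ref{lem:fund} (and $abc\neq0$ since $k\notin\{0,1,-1\}$ and $16k^3-4k=4k(2k-1)(2k+1)\neq0$ for these $k$). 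Then equation \eqref{e2}, i.e. $(16k^3-4k)x^2-(k-1)z^2=16k^3-5k+1$, is exactly $cx^2-az^2=c-a$, which is $-1$ times \eqref{Eq:Pellac}; so $(x,z)$ solves \eqref{e2} if and only if $(z,x)$ solves \eqref{Eq:Pellac} with the roles of the two unknowns as in Lemma~\ref{lem:fund}. Note $c-a=16k^3-4k-(k-1)=16k^3-5k+1$, confirming the right-hand side.

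Next I would simply transcribe the three conclusions of Lemma~\ref{lem:fund}. Part (a) gives the existence of $j_0\in\N$ and fundamental solutions $(x_1^{(j)},z_1^{(j)})$, $j=1,\dots,j_0$ — these are the objects denoted $(x_0^{(i)},z_0^{(i)})$ there, relabeled to match the notation of this section (and we may absorb the count $i_0$ into $j_0$). Part (b): the bounds in Lemma~\ref{lem:fund}(b) read $|x_0^{(i)}|^2\le\frac{|a||c-a|}{|s|-1}$ and $|z_0^{(i)}|^2\le\frac{|c-a|}{|a|}+\frac{|c||c-a|}{|s|-1}$; substituting $a=k-1$, $c-a=16k^3-5k+1$, $c=16k^3-4k$, $s=4k^2-2k-1$ yields precisely the two displayed inequalities, so this step is a pure substitution. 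One small point to check is that $|s|-1=|4k^2-2k-1|-1>0$ for $|k|>3$ (indeed $|4k^2-2k-1|\ge 4|k|^2-2|k|-1$ is large), so the denominators are positive and the bounds are meaningful. Part (c): Lemma~\ref{lem:fund}(c) states $z\sqrt{a}+x\sqrt{c}=(z_0^{(i)}\sqrt{a}+x_0^{(i)}\sqrt{c})(s+\sqrt{ac})^m$; rewriting with our values and reading the identity with $\sqrt{c}=\sqrt{16k^3-4k}$, $\sqrt{a}=\sqrt{k-1}$, $\sqrt{ac}=\sqrt{(k-1)(16k^3-4k)}$, and $s=4k^2-2k-1$, gives exactly the stated factorization $x\sqrt{16k^3-4k}+z\sqrt{k-1}=(x_1^{(j)}\sqrt{16k^3-4k}+z_1^{(j)}\sqrt{k-1})\bigl(4k^2-2k-1+\sqrt{(k-1)(16k^3-4k)}\bigr)^m$.

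In short, the proof is essentially bookkeeping: one checks that $(a,b,c)=(k-1,k+1,16k^3-4k)$ meets the hypotheses of Lemma~\ref{lem:fund} for $|k|>3$, observes that \eqref{e2} is $cx^2-az^2=c-a$, and then quotes Lemma~\ref{lem:fund} verbatim after substituting these values and relabeling the fundamental solutions. The only genuine (and very minor) thing to confirm is the chain of modulus inequalities $|k-1|\le|k+1|\le|16k^3-4k|$ and $|4k^2-2k-1|>1$ valid for $|k|>3$ (in fact the hypothesis $k\in\Z[i]\setminus\{0,1\}$ in the statement is the weakest under which $abc\neq0$ and Lemma~\ref{acnekva} applies; the ordering and the positivity of $|s|-1$ need the stronger $|k|>3$, which is in force throughout this section). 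There is no substantive obstacle; if anything, the one place to be slightly careful is matching which symbol plays the role of which unknown when passing between \eqref{Eq:Pellac} written as $az^2-cx^2=a-c$ and \eqref{e2} written as $cx^2-az^2=c-a$, but these are the same equation and the fundamental-solution structure transfers without change.
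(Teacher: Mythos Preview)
Your proposal is correct and matches the paper's approach: the paper gives no separate proof of Lemma~\ref{l1}, stating only that it ``follows from Lemma~\ref{lem:fund}'', and your write-up is precisely the bookkeeping needed to carry out that specialization (identify $a=k-1$, $c=16k^3-4k$, $s=4k^2-2k-1$, note that \eqref{e2} is $cx^2-az^2=c-a$, and substitute). Your side remarks on the ordering $|a|\le|b|\le|c|$ and on $|s|-1>0$ requiring $|k|>3$ (in force in this section) are apt and slightly more careful than the paper itself.
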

Hence, the solution $x$ of the equation \eqref{e2} is $x=\pm W_m^{(j)}$ for some $j\in\{1,\ldots,j_0\}$ and $m\in\N_0$, where the sequence $(W_m^{(j)})_m$ is recurrently defined by
\[ W_0^{(j)}=x_1^{(j)},\ W_1^{(j)}=x_1^{(j)}(4k^2-2k-1)+z_1^{(j)}(k-1),\ W_{m+2}^{(j)}=2(4k^2-2k-1)W_{m+1}^{(j)}-W_{m}^{(j)},\ m\geqslant 0. \]
For the time being, we omit the upper index $(j)$.

If $x$ is the solution of both \eqref{e1} and \eqref{e2}, then $x=V_n=W_m$. We are looking for the common elements of the sequences $(V_n)_n$ and $(W_m)_m$.

We apply the congruence method now. Observe the remainders that $(V_n)$ and $(W_m)$ leave when divided by $s=4k^2-2k-1$. The following lemma is easily proven by induction.
 \begin{lemma} For the sequences $(V_n)_n$ and $(W_m)_m$ it holds
 \begin{align*}
V_n \equiv 0, \pm 1, \pm(2k-1) & \pmod{4k^2-2k-1} \text{  and}\\
W_m \equiv \pm x_1, \pm z_1(k-1)& \pmod{4k^2-2k-1}
\end{align*}
for all indices $n$ and $m$.
\end{lemma}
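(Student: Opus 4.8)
The plan is to prove the two congruences separately, each by a short induction on the index after reducing the defining recurrences modulo $s:=4k^2-2k-1$ (note that $|k|>3$ guarantees $s\neq 0$, so reduction modulo $s$ in $\Z[i]$ is meaningful).

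For $(W_m)_m$ the reduction is immediate: since $W_{m+2}=2sW_{m+1}-W_m$, we have $W_{m+2}\equiv -W_m\pmod{s}$, so the residues are $2$-periodic up to sign. As $W_0=x_1$ and $W_1=x_1 s+z_1(k-1)\equiv z_1(k-1)\pmod{s}$, an easy induction gives $W_{2\ell}\equiv(-1)^\ell x_1$ and $W_{2\ell+1}\equiv(-1)^\ell z_1(k-1)\pmod{s}$, that is, $W_m\equiv\pm x_1,\ \pm z_1(k-1)\pmod{s}$, as claimed.

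For $(V_n)_n$ the recurrence coefficient $2k$ does not collapse modulo $s$, so instead I would track the pair $(V_n\bmod s,\,V_{n+1}\bmod s)$. Using the one-line identities $2k(2k-1)-1=s$ and $2k(2k-1)=s+1$, one computes $V_0\equiv 1$, $V_1\equiv 2k-1$, $V_2\equiv 0$, $V_3\equiv -(2k-1)$, $V_4\equiv -1$, $V_5\equiv -1$, $V_6\equiv -(2k-1)$, $V_7\equiv 0$, $V_8\equiv 2k-1$, $V_9\equiv 1$, and then $(V_{10},V_{11})\equiv(1,2k-1)\equiv(V_0,V_1)\pmod{s}$. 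Hence the residue sequence is purely periodic with period dividing $10$, and every residue that occurs lies in $\{0,\pm1,\pm(2k-1)\}$, which is the assertion. Equivalently, one may phrase this as: the ten ordered pairs exhibited above form a set stable under the map $(a,b)\mapsto(b,\,2kb-a)$ modulo $s$, and $(V_0,V_1)=(1,2k-1)$ lies in it, so the induction closes.

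I do not expect a genuine obstacle here: all the verifications are polynomial identities in $k$, so they hold in $\Z[i]$ automatically, and the only mildly tedious point is running through the ten-term cycle for $(V_n)$ to confirm $(V_{10},V_{11})\equiv(V_0,V_1)\pmod s$; every other step is a single reduction. This is presumably why the statement is flagged as "easily proven by induction."
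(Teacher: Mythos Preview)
Your proposal is correct and follows exactly the approach the paper indicates (``easily proven by induction''): you reduce the two recurrences modulo $s=4k^2-2k-1$, note that $W_{m+2}\equiv -W_m$ gives immediate $2$-periodicity up to sign for $(W_m)$, and explicitly compute the $10$-cycle of residues for $(V_n)$ using $2k(2k-1)=s+1$. The computations you list are all correct, and the paper supplies no further detail beyond the one-line remark, so there is nothing to compare.
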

 
By analysing these combinations we can conclude that, when $|k|>17$, all fundamental solutions $(x_1, z_1)$ which generate sequences $(W_m)_m$ that can intersect the sequence $(V_n)_n$, are given by the set
\[ (x_1,z_1)\in \{(\pm 1,\pm 1),(\pm k,\pm(4 k^2+ 2 k-1)),(\pm(2k-1),\pm(8k^2-1))\}. \]

E.~g.\ if $x_1\equiv 0 \pmod {4k^2-2k-1}$, then $x_1=0$ or $|x_1|\geqslant 4|k|^2-2|k|-1$. However, in the latter case, the bound given in Lemma \ref{l1} implies that \[ (4|k|^2-2|k|-1)^2 \leqslant \frac{|16k^3-5k+1||k-1|}{|4k^2-2k-1|-1}, \] i.~e.~$(4|k|^2-2|k|-1)^2(|4k^2-2k-1|-1) \leqslant |16k^3-5k+1||k-1|$, which implies $16|k|^4+16|k|^3+5|k|^2+4|k|+1 \geqslant 64 |k|^6 - 96 |k|^5 - 16 |k|^4 - 56 |k|^3 - 4 |k|^2 - 10 |k| - 2$, and is in turn equivalent to $-64 |k|^6 + 96 |k|^5 + 32 |k|^4 + 72 |k|^3 + 9 |k|^2 + 14 |k| + 3 \geqslant 0$, which is obviously impossible for large $|k|$ (one can determine that the largest zero of the left-hand side polynomial in $|k|$ is approximately $2.04414$). For $x_1\equiv \pm 1, \pm(2k-1)$, we similarly exclude all the possibilities except $x_1=\pm 1$ and $x_1=\pm(2k-1)$, which gives us the solutions $(\pm 1, \pm 1), (\pm (2k-1), \pm (8k^2-1))$.

If $z_1(k-1)\equiv 0, \pm1, \pm(2k-1) \pmod{4k^2-2k-1}$, then $z_1 = u(4k^2-2k-1)+r$ where $u$ is a Gaussian integer, while $r\in\{0, \pm 4k, \pm (4k+2)\}$, since $-4k-2$ is the multiplicative inverse of $k-1$ modulo $4k^2-2k-1$. The equation \eqref{e2} implies $k\mid 1-z_1^2$. On the other hand, $z_1\equiv -u+r \equiv -u, -u\pm 2\pmod{k}$, so $k \mid 1-u^2$ or $k\mid 1-(u\pm 2)^2$. If $|u| \leqslant 2$, then $|1-u^2|\leqslant 1+|u|^2\leqslant 5$ and $|1-(u\pm 2)^2| \leqslant |u|^2+4|u|+5\leqslant 17$, and the obtained divisibility cannot hold if $|k| > 17$, except when $u=\pm 1$. Here we get the solutions $(\pm k,\pm(4 k^2+ 2 k-1))$ for $u=\pm 1$ and $z_1=u(4k^2-2k-1)$. It is not possible that both $u=\pm 1$ and $r=\mp(4k+2)$ hold, because then from the equation \eqref{e2} it follows that $x_1^2 = \frac{(k-1)(4k^2-6k-3)^2+(16k^3-5k+1)}{16k^3-4k} \in\Z[i]$. This implies $2k^2-k\mid 8k+8$, which is impossible for $|k|>17$ (since $8k+8=0$ or $8|k|+8\geqslant 2|k|^2-|k|$).
If $|u|\geqslant \sqrt{5}$,  repeating the juxtaposition with upper bound from Lemma \ref{l1}, we see that this cannot hold for $|k| > 17$: $|z_1|\geqslant \sqrt{5}(4|k|^2-2|k|-1)-4|k|-2=4\sqrt{5}|k|^2-(4+2\sqrt{5})|k|-(2+\sqrt{5})$, so Lemma \ref{l1} implies \begin{align*} & 80 |k|^4 - 32 \sqrt{5} |k|^3 - 80 |k|^3 - 4 |k|^2 + 16 \sqrt{5} |k| + 36 |k| + 4 \sqrt{5} + 9 \leqslant \\
& \quad \leqslant \frac{(16|k|^3+4|k|)(16|k|^3+5|k|+1|}{4|k|^2-2|k|-2}+\frac{16|k|^3+5|k|+1|}{|k|-1}.
\end{align*} It follows that $-64 |k|^7 + (544 + 128 \sqrt{5}) |k|^6 + (-256 - 192 \sqrt{5}) |k|^5 + (-488 - 64 \sqrt{5}) |k|^4 + (332 + 144 \sqrt{5}) |k|^3 + (40 + 24 \sqrt{5}) |k|^2 + (-88 - 32 \sqrt{5}) |k| - 8 \sqrt{5} - 20 \leqslant 0$, which does not hold for $|k| > 12.019$.

 This proves the following lemma.

\begin{lemma}\label{lema:nizw} If $|k|>17$ and the system of equations \eqref{e1} and \eqref{e2} has a solution $x\not=\pm1$, then there are positive integers $m$ and $n$, and $1\leqslant j\leqslant 6$, such that
\[ V_n=\pm W_m^{(j)}, \]
where the sequences $(W_m^{(j)})$ are given with the following initial conditions
\begin{eqnarray*}
 W_0^{(1)}=1,& W_1^{(1)}=&4 k^2- k-2,\\
 W_0^{(2)}=1,& W_1^{(2)}=&4 k^2-3 k,\\
 W_0^{(3)}=k,& W_1^{(3)}=&8 k^3- 4 k^2- 4 k+1 ,\\
W_0^{(4)}=k,& W_1^{(4)}=&2k-1,\\
W_0^{(5)}=2k-1,& W_1^{(5)}=&16 k^3- 16 k^2 - k+2 ,\\
W_0^{(6)}=2k-1,& W_1^{(6)}=&k,
\end{eqnarray*}
and all the other elements are defined by
\[ W_{m+2}^{(j)}=2(4k^2-2k-1)W_{m+1}^{(j)}-W_{m}^{(j)},\ m\geqslant 0. \]
\end{lemma}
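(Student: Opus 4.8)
The plan is to combine the two available descriptions of the solutions: every common solution $x$ of \eqref{e1} and \eqref{e2} equals $\pm V_n$ for some $n\geqslant0$ (by the sequence \eqref{eq:nizv4}) and, by Lemma~\ref{l1}(c), equals $\pm W_m$ for some $m\geqslant0$, where $(W_m)=(W_m^{(j)})$ is the sequence attached to one of the fundamental solutions $(x_1^{(j)},z_1^{(j)})$ of \eqref{e2}. So the statement reduces to two tasks: first, pinning down which fundamental solutions can possibly generate a $W$-sequence that meets the $V$-sequence; second, recording the initial data $W_0^{(j)},W_1^{(j)}$ and cleaning up the extremal indices $m,n$.

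For the first task I would run the congruence sieve modulo $s=4k^2-2k-1$. By the preceding lemma on the residues of $(V_n)$ and $(W_m)$ modulo $4k^2-2k-1$, a common value forces $x_1\equiv 0,\pm1,\pm(2k-1)\pmod s$ or, since $-(4k+2)$ is the inverse of $k-1$ modulo $s$, it forces $z_1\equiv 0,\pm4k,\pm(4k+2)\pmod s$. Any representative of such a class other than the displayed small one has modulus of order $|s|\sim4|k|^2$, which is incompatible with the bounds $|x_1^{(j)}|^2\leqslant\frac{|16k^3-5k+1||k-1|}{|4k^2-2k-1|-1}$ and its analogue for $|z_1^{(j)}|$ from Lemma~\ref{l1}(b): clearing denominators and comparing dominant terms in $|k|$ rules these out once $|k|>17$. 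The genuinely small leftover cases $x_1=0$ and $z_1\equiv0\pmod s$ are disposed of by reducing \eqref{e2} modulo $k-1$ (forcing $k-1\mid 12$) and modulo $k$ (forcing $k\mid z_1^2-1$), respectively, while the mixed case $z_1=\pm(4k^2-2k-1)\mp(4k+2)$ is excluded because \eqref{e2} would then yield $2k^2-k\mid 8k+8$. What remains, up to an overall sign, is exactly the list of six pairs $(x_1,z_1)\in\{(1,1),(1,-1),(k,4k^2+2k-1),(k,-(4k^2+2k-1)),(2k-1,8k^2-1),(2k-1,-(8k^2-1))\}$.

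It then remains to substitute these pairs into $W_0^{(j)}=x_1^{(j)}$ and $W_1^{(j)}=(4k^2-2k-1)x_1^{(j)}+(k-1)z_1^{(j)}$, which reproduces the six sequences in the table (an overall sign merely replaces $W_m^{(j)}$ by $-W_m^{(j)}$ and is swallowed by the ``$\pm$''), and to argue $m,n\geqslant1$. The relation $x=\pm V_n$ with $x\neq\pm1$ forces $n\geqslant1$, since $V_0=1$ and $(|V_n|)$ is strictly increasing for $|k|>17$; and if $m=0$ then $x\in\{\pm1,\pm k,\pm(2k-1)\}$, where $\pm1$ is excluded by hypothesis while $x=\pm k$ and $x=\pm(2k-1)$ can be re-expressed with $m=1$ using $W_1^{(6)}=k$ and $W_1^{(4)}=2k-1$. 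The one genuinely delicate step is the first task: over $\Z[i]$ the triangle inequality runs the wrong way, so the ``large residue representative contradicts the Lemma~\ref{l1} bound'' argument has to be carried out via lower bounds of the shape $|z_1|\geqslant|\alpha||s|-|r|$ followed by an honest polynomial inequality in $|k|$, and one must be careful to intersect the modulo-$s$ and modulo-$k$ constraints correctly so as to land on precisely these six pairs.
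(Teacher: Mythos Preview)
Your proposal is correct and follows essentially the same route as the paper: the congruence sieve modulo $s=4k^2-2k-1$, the identification of $-(4k+2)$ as the inverse of $k-1$ modulo $s$, the elimination of large residue representatives via the bounds of Lemma~\ref{l1}(b), and the reduction of \eqref{e2} modulo $k$ to pin down $z_1$ are exactly what the paper does. Your treatment is in two places slightly more explicit than the paper's: you dispose of $x_1=0$ by reducing \eqref{e2} modulo $k-1$ (forcing $k-1\mid 12$), whereas the paper leaves this case implicit, and you spell out why one may take $m\geqslant 1$ by re-indexing the values $k$ and $2k-1$ through $W_1^{(6)}$ and $W_1^{(4)}$.
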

Observe that the sequences $(W_m^{(j)})_{j=1,\dotsc,6}$ intersect with $(V_n)_n$ at  $\{1\}$, $\{1\}$, $\{8 k^3- 4 k^2- 4 k+1\}$, $\{2k-1\}$, $\{2 k-1\}$, $\{2 k-1,8 k^3- 4 k^2- 4 k+1\}$, respectively. These intersections correspond to the extensions $d\in\{0, 4k, 64k^5 -48k^3+8k\}$. 

\section{Lower bound for the solutions}
With the aim of obtaining a lower bound for the solution $|x|$, we determine the remainders of the elements of sequences from Lemma \ref{lema:nizw} modulo $4k(k-1)$. In this section, unless otherwise specified, we assume that $|k|>17$. By calculating the first few elements of the sequences, we get
\begin{align*}
 (V_n)_{n\geqslant 0}&\equiv&&(1, 2k-1, 2k-1, 1, 1, 2k-1,\ldots) \hfill  &\pmod{4k(k-1)}, \\
 (W_m^{(1)})_{m\geqslant 0}&\equiv&&(1, 3k-2, -2k+3, 5k-4, -4k+5, 7k-6,\ldots)\hfill  &\pmod{4k(k-1)}, \\
 (W_m^{(2)})_{m\geqslant 0}&\equiv&&(1, k, 2k-1, -k+2, 4k-3, -3k+4,\ldots)\hfill  &\pmod{4k(k-1)},  \\
(W_m^{(3)})_{m\geqslant 0}&\equiv&&(k, 1, 3k-2, -2k+3, 5k-4, -4k+5,\ldots)\hfill & \pmod{4k(k-1)},  \\
(W_m^{(4)})_{m\geqslant 0}&\equiv&&(k, 2k-1, -k+2, 4k-3, -3k+4, 6k-5,\ldots)\hfill & \pmod{4k(k-1)},  \\
 (W_m^{(5)})_{m\geqslant 0}&\equiv&&(2k-1, -k+2, 4k-3, -3k+4, 6k-5,\ldots)\hfill & \pmod{4k(k-1)},  \\
 (W_m^{(6)})_{m\geqslant 0}&\equiv&&(2k-1, k, 1, 3k-2, -2k+3, 5k-4,\ldots)\hfill & \pmod{4k(k-1)}. \end{align*}

\begin{lemma}\label{lema:kongrvw}
Let $k$ be a Gaussian integer (of absolute value greater than $1$). For the sequence $(V_n)_n$ defined in \eqref{eq:nizv4}, it holds that $V_n \equiv 1 \pmod{4k(k-1)}$ if $n\equiv 0, 3 \pmod{4}$, while $V_n \equiv 2k-1 \pmod{4k(k-1)}$ for $n\equiv 1, 2\pmod{4}$.

For the sequence $(W_m^{(1)})_m$ defined in Lemma \ref{lema:nizw}, its elements $W_{2m}^{(1)} \equiv -2mk+2m+1 \pmod{4k(k-1)}$ and $W_{2m+1}^{(1)} \equiv (2m+3)k-2m-2 \pmod{4k(k-1)}$ for all $m \in \N_0$. This sequence $(W_m^{(1)})_m$ is increasing in absolute value, and the inequality $|W_m^{(1)}| \geqslant (8|k|^2-4|k|-3)^{m-1}$ holds for all $m\geqslant 0$. Similarly, the other sequences $(W_m^{(j)})_m$, for $j=2, \dotsc, 6$,  are increasing (in absolute value) after the index $m=1$ and $|W_m^{(j)}| \geqslant (8|k|^2-4|k|-3)^{m-1}$. The following congruences also hold
\begin{align*}
& W_{2m}^{(2)}\equiv 2mk-(2m-1),& &W_{2m+1}^{(2)}\equiv -(2m-1)k+2m \quad\quad &\pmod{(4k(k-1)}\\
& W_{2m}^{(3)}\equiv (2m+1)k-2m,& &W_{2m+1}^{(3)}\equiv -2mk+2m+1 \quad\quad &\pmod{(4k(k-1)}\\
& W_{2m}^{(4)}\equiv (1-2m)k+2m,& &W_{2m+1}^{(4)}\equiv 2mk-(2m+1) \quad\quad &\pmod{(4k(k-1)}\\
& W_{2m}^{(5)}\equiv (2m+2)k-(2m+1),& &W_{2m+1}^{(5)}\equiv (-2m-1)k+2m \quad\quad &\pmod{(4k(k-1)}\\
& W_{2m}^{(6)}\equiv (2-2m)k+(2m-1),& &W_{2m+1}^{(6)}\equiv (2m+1)k-2m \quad\quad &\pmod{(4k(k-1)}
\end{align*}
\end{lemma}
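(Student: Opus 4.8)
The plan is to establish each group of assertions — the residues of $(V_n)$, and then for each $j=1,\dots,6$ the residue formula, the monotonicity, and the lower bound on $|W_m^{(j)}|$ — by a direct induction; there is no deep point here, only bookkeeping spread over the six sequences. The one useful reduction is that $4k^2-2k-1=4k(k-1)+(2k-1)$, so modulo $4k(k-1)$ we have $2(4k^2-2k-1)\equiv 4k-2$, and hence every $(W_m^{(j)})$ obeys the reduced recurrence $W_{m+2}^{(j)}\equiv(4k-2)W_{m+1}^{(j)}-W_m^{(j)}\pmod{4k(k-1)}$, while $(V_n)$ retains $V_{n+2}=2kV_{n+1}-V_n$.

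For $(V_n)$ I would simply compute modulo $4k(k-1)$: $V_0\equiv1$, $V_1\equiv 2k-1$, $V_2=4k^2-2k-1\equiv 2k-1$, $V_3\equiv 2k(2k-1)-(2k-1)=(2k-1)^2=4k(k-1)+1\equiv1$, then $V_4\equiv1$ and $V_5\equiv 2k-1$. Thus the reduced pair $(V_n,V_{n+1})$ returns to its initial value $(1,2k-1)$ at $n=4$; since the recurrence is a first-order recursion on this pair over the ring $\Z[i]/(4k(k-1))$, the reduced sequence is periodic with period $4$, and reading off positions $n\equiv0,1,2,3$ gives the stated residues.

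For the $W$-congruences I would check the proposed closed forms at $m=0,1$ by hand — for instance $W_1^{(1)}=4k^2-k-2\equiv 3k-2$ because $4k^2-4k=4k(k-1)$, matching $(2\cdot0+3)k-2\cdot0-2$ — and then run the reduced recurrence. The only annoyance is a spurious $k^2$-term in the induction step: for $j=1$, $(4k-2)\big((2m+3)k-2m-2\big)-(-2mk+2m+1)$ differs from the predicted residue $-(2m+2)k+2m+3$ by exactly $(8m+12)(k^2-k)=(2m+3)\cdot4k(k-1)\equiv0$, the coefficient $8m+12$ being visibly a multiple of $4$; the identical cancellation occurs for $j=2,\dots,6$, which closes that induction.

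For the monotonicity and the size bound I would apply the triangle inequality to the unreduced recurrence. With $\kappa:=8|k|^2-4|k|-3$ one has $2|4k^2-2k-1|\ge 8|k|^2-4|k|-2=\kappa+1$, so whenever $|W_{m+1}^{(j)}|\ge|W_m^{(j)}|$ it follows that $|W_{m+2}^{(j)}|\ge(\kappa+1)|W_{m+1}^{(j)}|-|W_m^{(j)}|\ge\kappa|W_{m+1}^{(j)}|$, which is at once $>|W_{m+1}^{(j)}|$ (as $\kappa>1$ for $|k|>17$) and $\ge\kappa\cdot\kappa^{m-1}=\kappa^m$; this propagates both the monotonicity and the estimate $|W_m^{(j)}|\ge\kappa^{m-1}$. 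What remains is only the small-index base cases from the explicit initial values: for $j=1$ that $|W_0^{(1)}|=1\ge\kappa^{-1}$ and $|W_1^{(1)}|=|4k^2-k-2|\ge1$; for $j=2,\dots,6$ that $|W_1^{(j)}|\ge1$ and $|W_2^{(j)}|\ge\max(|W_1^{(j)}|,\kappa)$, using that $|W_0^{(j)}|$ is at most of order $|k|$ while $\kappa$ is of order $|k|^2$. I expect no real obstacle; the only point demanding a moment's care — and the reason the statement reads ``after the index $m=1$'' rather than ``from $m=0$'' — is $j=6$, where $|W_0^{(6)}|=|2k-1|>|k|=|W_1^{(6)}|$, so that sequence need not be increasing at the very first step, although the inequality $|W_m^{(6)}|\ge\kappa^{m-1}$ still holds throughout.
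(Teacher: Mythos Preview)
Your proposal is correct and follows essentially the same route as the paper: reduce $2(4k^2-2k-1)\equiv 4k-2\pmod{4k(k-1)}$ and run the induction for the congruences, then use the triangle inequality $|W_{m+2}|\ge(8|k|^2-4|k|-2)|W_{m+1}|-|W_m|$ to propagate both monotonicity and the lower bound $|W_m|\ge\kappa^{m-1}$. The paper carries out the $W^{(1)}$ inductive step explicitly in the same way and then declares the other sequences analogous; your version is in fact slightly more explicit than the paper about the $V_n$ periodicity and about the $j=6$ anomaly at $m=0$.
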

\begin{proof}
All the claims are proven inductively. We first prove that the sequence $(|W_m^{(1)}|)_m$ is increasing.

For $m=1$, the inequality $|W_1| \geqslant |W_0|$ holds since $|4k^2-k-2| \geqslant 4|k|^2-|k|-2 \geqslant 1 = |W_0|$ for $|k| \geqslant 1$.

\noindent From $W_{m+1}=2(4k^2-2k-1)W_{m}-W_{m-1}$, it follows that $|W_{m+1}| \geqslant (8|k|^2-4|k|-2)|W_m|-|W_{m-1}|  \geqslant (8|k|^2-4|k|-3)|W_m| + |W_m|-|W_{m-1}| \geqslant (8|k|^2-4|k|-3)|W_m|$.

This directly shows not only that the sequence of  absolute values is increasing, but also that $|W_m| \geqslant (8|k|^2-4|k|-3)^{m-1}$ for all $m\geqslant 0$. Likewise, this holds for $W_m^{(i)}$ for all $i = 1, \dotsc, 6$.

Inductively one shows that $W_{2m}^{(1)} \equiv -2mk+2m+1 \pmod{4k(k-1)}$ and \\ $W_{2m+1}^{(1)} \equiv (2m+3)k-2m-2 \pmod{4k(k-1)}$. The inductive basis, for $m=0$, was computed before the lemma statement. If we assume that $W_{2m}^{(1)} \equiv -2mk+2m+1 \pmod{4k(k-1)}$ and $W_{2m+1}^{(1)} \equiv (2m+3)k-2m-2 \pmod{4k(k-1)}$, then
\begin{align*}
W_{2m+2}^{(1)}&= 2(4k^2-2k-1)W_{2m+1}-W_{2m}\\
&\equiv 2(2k-1)\cdot ((2m+3)k-2m-2) - (-2mk+2m+1)\\
&= (2m+3)(4k^2-2k)-(2m+2)(4k-2)+2mk-2m-1\\
&\equiv (2m+3)\cdot 2k - 8mk+4m-8k+4+2mk-2m-1\\
&= -2(m+1)k+2m+3 \pmod{4k(k-1)}.
\end{align*} Similarly, by using this claim, one gets that
\begin{align*}
W_{2m+3}^{(1)} &= 2(4k^2-2k-1)W_{2m+2}-W_{2m+1}\\
&\equiv 2(2k-1)(-2(m+1)k+2m+3)-((2m+3)k-2m-2)\\
&= -2(m+1)(4k^2-2k)+2(2k-1)(2m+3)-(2m+3)k+2m+2\\
&\equiv -2(m+1)\cdot 2k + 8mk+12k-4m-6-2mk-3k+2m+2\\
&= (2m+5)k-2m-4 \pmod{4k(k-1)}.
\end{align*}
This shows the congruence claims for $(W_m^{(1)})_m$, while the claims stated for the other sequences are proven completely analogously.
\end{proof}

Now we observe the even indices case: $W_{2m}^{(1)} = V_{2n} \Rightarrow -2mk+2m+1 \equiv 1 \pmod{4k(k-1)}$ so $4k^2-4k$ divides $2mk-2m$, i.~e.~$2k(k-1)|m(k-1)$ and, most importantly, $2k \mid m$.
Analogously, $W_{2m} = V_{2n+1} \equiv 2k-1 \pmod{4k(k-1)}$ implies that $2k \mid m+1$. Any of these conclusions, $2k \mid m$ and $2k \mid m+1$, implies that
\begin{equation}
m \geqslant 2|k|-1,
\label{eq:mk}\end{equation} unless $m$ is such that the corresponding multiple of $2k$ is actually $0$. Therefore, if $m\neq 0, -1$, then \[ |x| \geqslant (8|k|^2-4|k|-3)^{4|k|-3}.\]

For odd indices in the sequence $(W^{(1)}_m)$, $(2m+3)k-2m-2 \equiv 1 \pmod{4k(k-1)}$ implies that $4k(k-1)\mid (2m+3)(k-1)$ and $4k \mid 2m+3$, which is obviously impossible. Analogously, $(2m+3)k-2m-2 \equiv 1 \pmod{4k(k-1)}$ implies $4k\mid 2m+1$, a contradiction.

In the same way, one gets similar conclusions for the remaining sequences $(W_m^{(i)})_m$ ($i=2,\dotsc, 6$): for one case (even/odd index) there is a contradiction, while the other case implies that $2k\mid m$ or $2k\mid m\pm 1$. In any case, $m \geqslant 2|k|-1$ if $m\not\in\{-1, 0, 1\}$ and the same lower bound holds. We have proven the following result.

\begin{proposition}\label{prop:x} If $(x, y, z)$ is the solution of the system
\begin{align*}
(k+1)x^2-(k-1)y^2&=2,\tag{\ref{e1}}\\
(16k^3-4k)x^2-(k-1)z^2&=16k^3-5k+1 \tag{\ref{e2}},
\end{align*}
for $|k|>17$ and $x\not\in\{1,  k,  2k-1, 8k^3-4k^2-4k+1\}$, then $|x| \geqslant (8|k|^2-4|k|-3)^{4|k|-3}$.
 \end{proposition}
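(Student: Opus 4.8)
The plan is to combine the structural description of all relevant solution sequences from Lemma~\ref{lema:nizw} with the explicit congruences modulo $4k(k-1)$ established in Lemma~\ref{lema:kongrvw}, exactly as sketched in the paragraphs immediately preceding the proposition. First I would recall that any solution $x$ of the system with $x\neq\pm1$ must, by Lemma~\ref{lema:nizw}, satisfy $V_n=\pm W_m^{(j)}$ for some $1\leqslant j\leqslant 6$ and positive integers $m,n$; and that the excluded values $x\in\{1,k,2k-1,8k^3-4k^2-4k+1\}$ are precisely the ones arising from the small-index intersections $W_0^{(j)}$ or $W_1^{(j)}$ listed after Lemma~\ref{lema:nizw}. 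So it suffices to show that whenever $V_n=\pm W_m^{(j)}$ with $m\geqslant 2$ (the only remaining possibility once the listed small solutions are discarded), one already has $m\geqslant 4|k|-3$, and then invoke the growth estimate $|W_m^{(j)}|\geqslant(8|k|^2-4|k|-3)^{m-1}$ from Lemma~\ref{lema:kongrvw}.

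The core of the argument is the congruence analysis modulo $4k(k-1)$. For each $j$ I would split into the even-index case $m=2m'$ and the odd-index case $m=2m'+1$, substitute the congruence formula for $W_m^{(j)}$ from Lemma~\ref{lema:kongrvw}, and compare with the two possible residues of $V_n$, namely $1$ (for $n\equiv0,3\pmod4$) and $2k-1$ (for $n\equiv1,2\pmod4$); the $\pm$ sign just doubles the number of cases. In each case one obtains a divisibility of the shape $4k(k-1)\mid (\text{linear in }m')\cdot(k-1)$ or $\cdots\cdot k$, which reduces to $4k\mid(\text{linear in }m')$ or $2k\mid(\text{linear in }m')$. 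As in the worked case $(W_m^{(1)})$, for one parity this linear expression is forced to be odd (e.g.\ $2m'+3$ or $2m'+1$) and hence cannot be divisible by $4k$ — a contradiction ruling that parity out entirely — while for the other parity the divisibility forces $2k\mid m'$ or $2k\mid m'\pm1$. Since $m'$ is a nonnegative integer, either the relevant multiple of $2k$ is $0$ (giving $m'\in\{0,1\}$, hence $m\in\{0,1,2\}$, corresponding to the already-excluded small solutions or to one that must be checked directly), or $|m'|\geqslant 2|k|-1$, whence $m\geqslant 2m'-1\geqslant 4|k|-3$, matching display \eqref{eq:mk}.

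With $m\geqslant 4|k|-3$ in hand, the conclusion is immediate: $|x|=|W_m^{(j)}|\geqslant(8|k|^2-4|k|-3)^{m-1}\geqslant(8|k|^2-4|k|-3)^{4|k|-3}$, using $|k|>17$ to ensure $8|k|^2-4|k|-3>1$ so the exponent bound is monotone. The one bookkeeping subtlety is making sure that the small indices $m=2$ for each $j$ — which survive the ``multiple is zero'' branch — genuinely give only the four values of $x$ listed in the hypothesis (or values that are themselves $\pm1$, or that fail to be common to $(V_n)$); this is the content of the remark after Lemma~\ref{lema:nizw} that the six sequences meet $(V_n)_n$ only at $\{1\},\{1\},\{8k^3-4k^2-4k+1\},\{2k-1\},\{2k-1\},\{2k-1,\,8k^3-4k^2-4k+1\}$, so no new solution slips through.

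I expect the main obstacle to be purely organizational rather than conceptual: carrying out the parity-and-residue case split for all six sequences $j=1,\dots,6$ (each with even/odd index and two target residues and a sign), and in each branch correctly extracting the divisibility $4k\mid(\cdots)$ or $2k\mid(\cdots)$ from $4k(k-1)\mid(\cdots)$, keeping track of which parity is contradictory and which yields the bound. The paper's choice to do $(W_m^{(1)})$ in full and then assert ``in the same way'' for the rest reflects exactly this: the hard part is the one explicit computation, and the remaining cases are mechanical repetitions with the congruence data already tabulated in Lemma~\ref{lema:kongrvw}.
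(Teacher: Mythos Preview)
Your proposal is correct and follows exactly the paper's approach: reduce via Lemma~\ref{lema:nizw} to $V_n=\pm W_m^{(j)}$, use the congruences of Lemma~\ref{lema:kongrvw} modulo $4k(k-1)$ to rule out one parity and force $2k\mid m'$ or $2k\mid m'\pm1$ in the other, and then apply the growth bound. One small arithmetic slip to fix: since the full index is $m=2m'$ or $m=2m'+1$, you get $m\geqslant 2m'\geqslant 4|k|-2$ (not $2m'-1$), which is precisely what is needed for $m-1\geqslant 4|k|-3$; also, the ``multiple is zero'' branch in fact only yields $m'=0$ (the case $m'+1=0$ is impossible for $m'\geqslant0$), so the exceptional full indices are exactly $m\in\{0,1\}$, matching the paper's remark after the proposition.
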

 
We note here that the exceptions $x= 1$, $x=k$, $x=2k-1$ and $x=8k^3-4k^2-4k+1$ correspond to the indices $m=0$ and $m=1$, i.~e.~when $2k \mid m$ does not imply that $m\geqslant 2|k|$.

\section{The problem of applying Jadrijević--Ziegler theorem}
There are two essentially different systems we can attempt to solve in this problem. One is given in Proposition \ref{prop:x} and Jadrijević--Ziegler theorem \cite{Borka} cannot be applied here because its conditions are not satisfied. The second system has coefficient $16k^3-4k$ on left-hand side of both of the equations -- we will show that, while the conditions are satisfied, this theorem cannot give us a useful result.

First, we focus on the system already given.
\begin{lemma}\label{thete12f}
If $(x, y, z)$ is a solution of the system of equations \eqref{e1} and \eqref{e2}, and
$\displaystyle \theta_1^{(1)} = \pm \sqrt{\frac{k+1}{k-1}}$, $\theta_1^{(2)} = -\theta_1^{(1)}$, 
$\displaystyle \theta_2^{(1)} = \pm \sqrt{\frac{4k^2-1}{4k(k-1)}}$, $\theta_2^{(2)} = -\theta_2^{(1)}$, where signs are chosen in such a way that
\[ \left| \theta_1^{(1)} - \frac{y}{x} \right| \leqslant \left| \theta_1^{(2)} - \frac{y}{x} \right| \quad \text{ and }\quad \left| \theta_2^{(1)} - \frac{z}{4kx} \right|\leqslant \left| \theta_2^{(2)} - \frac{z}{4kx} \right|, \]
then \begin{center}
\begin{align*}
\left| \theta_1^{(1)} - \frac{4ky}{4kx} \right| \leqslant & \frac{2}{\sqrt{|k^2-1|}}\cdot \frac{1}{|x|^2}, \quad \text{and}\\
\left| \theta_2^{(1)} - \frac{z}{4kx} \right| \leqslant & \frac{|16k^3-5k+1|}{8\sqrt{|4k^6-4k^5-k^4+k^3|}} \cdot \frac{1}{|x|^2}.
\end{align*}
\end{center}
\end{lemma}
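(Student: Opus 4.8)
The plan is to derive both inequalities from the two Pell-type equations \eqref{e1} and \eqref{e2} by the standard trick of factoring a difference of squares and bounding the ``far'' conjugate factor from below. Throughout, write $N_1 = k^2-1$ so that $(k+1)(k-1) = N_1$, and $N_2 = 4k(k-1)$, noting that $(4k^2-1)\cdot 4k(k-1) = (4k^2-1)N_2$ appears naturally when we rescale the unknown $z$ by $4k$.

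For the first inequality, start from \eqref{e1} in the form $(k+1)x^2 - (k-1)y^2 = 2$. Multiply through by $(k+1)$ (or equivalently factor over $\sqrt{(k+1)(k-1)}$) to write $2 = (k-1)\bigl(\sqrt{(k+1)/(k-1)}\,x - y\bigr)\bigl(\sqrt{(k+1)/(k-1)}\,x + y\bigr)$, i.e. $2 = (k-1) x^2 \bigl(\theta_1^{(1)} - \tfrac{y}{x}\bigr)\bigl(\theta_1^{(1)} + \tfrac{y}{x}\bigr)$ with the sign of $\theta_1^{(1)}$ chosen as in the hypothesis so that $\theta_1^{(1)}$ is the near root. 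Then $\bigl|\theta_1^{(1)} - \tfrac{y}{x}\bigr| = \frac{2}{|k-1|\,|x|^2\,|\theta_1^{(1)} + y/x|}$, and since $\theta_1^{(1)}$ is the closer of $\pm\sqrt{(k+1)/(k-1)}$ to $y/x$, the other factor satisfies $|\theta_1^{(1)} + y/x| \geqslant |\theta_1^{(1)}| = \sqrt{|k+1|/|k-1|}$ (up to the usual care that the two roots are negatives of each other, so the far one has the same modulus). This gives $\bigl|\theta_1^{(1)} - \tfrac{y}{x}\bigr| \leqslant \frac{2}{|k-1|\sqrt{|k+1|/|k-1|}}\cdot\frac{1}{|x|^2} = \frac{2}{\sqrt{|k-1|\,|k+1|}}\cdot\frac{1}{|x|^2} = \frac{2}{\sqrt{|k^2-1|}}\cdot\frac{1}{|x|^2}$, which is exactly the claim (after noting $\tfrac{4ky}{4kx} = \tfrac{y}{x}$).

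For the second inequality, apply the same factoring to \eqref{e2}: $(16k^3-4k)x^2 - (k-1)z^2 = 16k^3-5k+1$. Since $16k^3-4k = 4k(4k^2-1)$, divide by $k-1$ and rewrite as $16k^3-5k+1 = (k-1)\bigl(\sqrt{(16k^3-4k)/(k-1)}\,x - z\bigr)\bigl(\sqrt{(16k^3-4k)/(k-1)}\,x + z\bigr)$. To bring in $\theta_2^{(1)} = \pm\sqrt{(4k^2-1)/(4k(k-1))}$ and the quantity $z/(4kx)$, factor $4k$ out of the square root: $\sqrt{(16k^3-4k)/(k-1)} = \sqrt{4k(4k^2-1)/(k-1)}$, and after pulling the rescaling through one gets $16k^3-5k+1 = (k-1)\cdot(4k)^2\cdot x^2 \cdot\bigl(\theta_2^{(1)} - \tfrac{z}{4kx}\bigr)\bigl(\theta_2^{(1)} + \tfrac{z}{4kx}\bigr)$, so that $\bigl|\theta_2^{(1)} - \tfrac{z}{4kx}\bigr| = \frac{|16k^3-5k+1|}{16|k|^2\,|k-1|\,|x|^2\,|\theta_2^{(1)}+z/(4kx)|}$. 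As before the far conjugate has modulus $\geqslant |\theta_2^{(1)}| = \sqrt{|4k^2-1|/(4|k|\,|k-1|)}$, whence $\bigl|\theta_2^{(1)} - \tfrac{z}{4kx}\bigr| \leqslant \frac{|16k^3-5k+1|}{16|k|^2|k-1|}\cdot\sqrt{\frac{4|k||k-1|}{|4k^2-1|}}\cdot\frac{1}{|x|^2} = \frac{|16k^3-5k+1|}{8\sqrt{|k|^3\,|k-1|^3/\,|k-1|\cdot|4k^2-1|}}\cdot\frac{1}{|x|^2}$; collecting the radicand as $16\,|k|^4|k-1|^2/(4|k||k-1|) = 4|k|^3|k-1|$ times $|4k^2-1|$ in the denominator, and simplifying $4|k|^3|k-1|\cdot|4k^2-1| = |4k^6-4k^5-k^4+k^3|\cdot 4$ (check: $4k^6-4k^5-k^4+k^3 = k^3(k-1)(4k^2-1)$, and $|k^3(k-1)(4k^2-1)| = |k|^3|k-1||4k^2-1|$, so the constant $8 = 16/2$ absorbs the remaining factor of $4$ under the square root), yields the stated bound $\frac{|16k^3-5k+1|}{8\sqrt{|4k^6-4k^5-k^4+k^3|}}\cdot\frac{1}{|x|^2}$.

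The only genuinely delicate point — and the step I expect to require the most care — is the lower bound on the ``far'' conjugate factor. In the real case one simply observes that if $\theta$ and $-\theta$ are the two roots and $p/q$ is nearer to $\theta$, then $|{-}\theta - p/q| \geqslant |\theta|$. Over $\Cc$ (here, over $\Zi$) the same elementary inequality holds because $|\theta + p/q| + |\theta - p/q| \geqslant |2\theta| = 2|\theta|$ forces the larger of the two to be $\geqslant |\theta|$, and by the sign choice in the hypothesis the larger one is precisely $|\theta^{(1)} + y/x|$ (resp.\ $|\theta_2^{(1)} + z/(4kx)|$). One must also make sure the square roots are taken consistently (principal branch) so that the two algebraic conjugates really are negatives of one another, which is automatic here since $\theta_1^{(2)} = -\theta_1^{(1)}$ and $\theta_2^{(2)} = -\theta_2^{(1)}$ by definition. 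The remaining algebra — clearing denominators, tracking the factor $4k$, and simplifying the radical $k^3(k-1)(4k^2-1)$ — is routine bookkeeping.
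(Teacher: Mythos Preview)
Your proof is correct and follows essentially the same route as the paper: factor the difference of squares coming from each Pell-type equation, then bound the ``far'' conjugate from below via the triangle inequality $|\theta^{(2)} - p/q| \geqslant \tfrac12\bigl(|\theta^{(1)} - p/q| + |\theta^{(2)} - p/q|\bigr) \geqslant \tfrac12|\theta^{(1)}-\theta^{(2)}| = |\theta^{(1)}|$, exactly as the paper does (the first inequality is simply quoted there from \cite{Zrinka}). The only cosmetic issue is that your final algebraic simplification is written a bit hastily; the clean chain is $\dfrac{|16k^3-5k+1|}{16|k|^2|k-1|}\cdot\sqrt{\dfrac{4|k||k-1|}{|4k^2-1|}} = \dfrac{|16k^3-5k+1|}{8\sqrt{|k|^3|k-1|\,|4k^2-1|}} = \dfrac{|16k^3-5k+1|}{8\sqrt{|4k^6-4k^5-k^4+k^3|}}$, using $k^3(k-1)(4k^2-1) = 4k^6-4k^5-k^4+k^3$.
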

\begin{proof}
The first inequality, $\displaystyle \left| \theta_1^{(1)} - \frac{4ky}{4kx} \right| \leqslant \frac{2}{\sqrt{|k^2-1|}}\cdot \frac{1}{|x|^2}$, was already obtained in \cite{Zrinka}. In the same manner,

\begin{align}
\left| \theta_2^{(1)} - \frac{z}{4kx} \right| &\,\,\,= \left| (\theta_2^{(1)})^2 - \frac{z^2}{16k^2x^2} \right|\cdot \left| \theta_2^{(1)} + \frac{z}{4kx} \right|^{-1} \nonumber\\
&\,\,\,= \left| \frac{1}{16k^2}\left(\frac{16k^3-4k}{k-1}-\frac{z^2}{x^2}\right)\right| \cdot \left| \theta_2^{(2)}-\frac{z}{4kx}\right|^{-1}\nonumber\\
&\overset{\eqref{e2}}{=} \frac{|16k^3-5k+1|}{|16k^3-16k^2|}\cdot \frac{1}{|x|^2} \cdot \left| \theta_2^{(2)}-\frac{z}{4kx}\right|^{-1}\label{theta2}
\end{align}

Furthermore, because of the way the signs were chosen, \[  \left| \theta_2^{(2)}-\frac{z}{4kx}\right| \geqslant \frac 12 \left(\left| \theta_2^{(1)}-\frac{z}{4kx}\right|+\left| \theta_2^{(2)}-\frac{z}{4kx}\right|\right) \geqslant \frac 12 |\theta_2^{(1)}-\theta_2^{(2)}| = \left| \sqrt{\frac{4k^2-1}{4k^2-4k}} \right| \]
Plugging in \eqref{theta2}, we get
\[ \left| \theta_2^{(1)} - \frac{z}{4kx} \right| \leqslant \frac{|16k^3-5k+1|}{|16k^3-16k^2|} \cdot \left| \sqrt{\frac{4k(k-1)}{4k^2-1}} \right| \cdot\frac{1}{|x|^2} = \frac{|16k^3-5k+1|}{8\sqrt{|4k^6-4k^5-k^4+k^3|}} \cdot \frac{1}{|x|^2}.\]
\end{proof}
Now we want to apply the following theorem \cite{Borka}. 
\begin{theorem}[{\cite[Theorem 7.1]{Borka}}]
Let $\theta_i = \sqrt{1+\frac{a_i}{T}}, i = 1,2$ where $a_1 \neq a_2$ and $T$ are in the ring of integers of an imaginary quadratic field $K$. Let $|T| > M=\max\{|a_1|,|a_2|\}$, \[ L=\frac{27}{16|a_1|^2|a_2|^2|a_1-a_2|^2}(|T|-M)^2 > 1. \]

Then \[ \max\left\{\left|\theta_1-\frac{p_1}{q}\right|, \left|\theta_2-\frac{p_2}{q}\right|\right\} > c|q|^{-\lambda},\]
holds for all algebraic integers $p_1, p_2, q\in K$, where $\ds \lambda = 1+\frac{\log P}{\log L}$, $c^{-1} = 4pP(\max\{1, 2l\})^{\lambda-1}$,

\[ l = \frac{27}{64}\frac{|T|}{|T|-M}, p=\sqrt{\frac{2|T|+3M}{2|T|-2M}},
P=16\frac{|a_1|^2|a_2|^2|a_1-a_2|^2}{\min\{|a_1|, |a_2|, |a_1-a_2|\}^3} (2|T|+3M).\]
\end{theorem}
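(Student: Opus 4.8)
This is a quantitative simultaneous approximation statement of Thue--Siegel type, and the plan is to establish it by the classical hypergeometric (Pad\'e approximation) method, adapted to an imaginary quadratic field; the full argument is the content of \cite{Borka}, so here I only indicate the strategy and where the real work lies.

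The starting point is to construct, for every $r\in\N$, explicit polynomials $X_r,Y_r$ of degree $r$ with rational coefficients built from contiguous Gauss hypergeometric polynomials (essentially ${}_2F_1(-r,-r-\tfrac{1}{2};-2r;x)$ and its neighbours), together with the Siegel-type identity
\[ Y_r(x)\sqrt{1+x}-X_r(x)=c_r\,x^{2r+1}E_r(x), \]
in which the remainder $E_r$ is itself a hypergeometric integral and hence admits an explicit upper bound for $\abs{x}<1$. Specializing $x=a_i/T$ and clearing the (bounded) common denominators of the coefficients — a power of $2$ times binomial denominators — so as to land in the ring of integers of $K$, one obtains algebraic integers $p_{1,r},p_{2,r},q_r$ with $\abs{q_r}$ bounded from above and each remainder $\abs{q_r\theta_i-p_{i,r}}$ bounded from above. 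The quantities $L$ and $P$ in the statement are exactly what governs the geometric growth $\abs{q_{r+1}}\asymp L\abs{q_r}$ and the geometric decay $\abs{q_r\theta_i-p_{i,r}}\asymp\abs{q_r}^{-1}P^{-r}$ of these two sequences, while $l$ and $p$ produce the archimedean constants in front.

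Next I would carry out the non-vanishing step. Given arbitrary algebraic integers $p_1,p_2,q$, two consecutive approximation triples $(p_{1,r},p_{2,r},q_r)$ and $(p_{1,r+1},p_{2,r+1},q_{r+1})$ are $K$-linearly independent, so for at least one of $r,r+1$ the $3\times 3$ determinant whose third row is $(p_1,p_2,q)$ is a nonzero element of the ring of integers of $K$; its absolute value is then at least $1$, since in an imaginary quadratic field the nontrivial conjugation is complex conjugation, so the norm of a nonzero algebraic integer is a nonzero rational integer and equals $\abs{\,\cdot\,}^2$. Expanding that determinant along the third row, isolating the term $q\cdot\max_i\abs{q\theta_i-p_i}$ against $\abs{q_r}$ and the two small remainders, and choosing $r$ by the pigeonhole principle so that $\abs{q_r}$ is comparable to the appropriate power of $\abs{q}$, one solves for $\max_i\abs{\theta_i-p_i/q}$ and reaches the stated bound with $\lambda=1+\frac{\log P}{\log L}$ and $c^{-1}=4pP(\max\{1,2l\})^{\lambda-1}$.

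The genuinely delicate part — and the reason the theorem is invoked rather than reproved — is the simultaneous sharp accounting of the two opposing phenomena in the construction: on the archimedean side one must control the growth of the $\binom{2r}{r}$-type factors, while on the non-archimedean side one must extract the largest possible common divisor of the coefficients of $X_r$ and $Y_r$ (the ``denominator'' of the hypergeometric polynomials), since any slack there is lost in $L$ and in $P$ and hence worsens $\lambda$. Optimizing this trade-off so that the constants come out in precisely the displayed form is exactly \cite[Theorem 7.1]{Borka}, which we use here as a black box.
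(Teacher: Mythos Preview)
The paper does not prove this statement at all: it is quoted verbatim as \cite[Theorem~7.1]{Borka} and used as a black box, exactly as you yourself note in your final paragraph. So there is no ``paper's own proof'' to compare against; your sketch of the Thue--Siegel hypergeometric Pad\'e method is a faithful outline of what Jadrijevi\'c and Ziegler actually do in \cite{Borka}, and nothing more is expected here.
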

Writing $\displaystyle \theta_1 =\sqrt{\frac{k+1}{k-1}} = \sqrt{1+\frac{2}{k-1}}, \quad \theta_2 = \sqrt{\frac{4k^2-1}{4k(k-1)}} = \sqrt{1+\frac{4k-1}{4k^2-4k}}$, we see that, to get the same denominators, we need to write $\theta_1$ as $\theta_1 = \sqrt{1+\dfrac{8k}{4k^2-4k}}$. Hence
$a_1 = 8k, a_2 = 4k-1, T = 4k^2-4k$. Then $M=\max\{|a_1|, |a_2|\} = 8|k|$. The inequality $|k-1| \geqslant |k|-1 > 2$ holds for $|k| > 3$, so $|T| = 4|k||k-1| > 8|k|=M$.

Unfortunately, $\displaystyle L = \frac{27}{16\cdot (8|k|)^2 \cdot |4k-1|^2 \cdot |4k+1|^2} (4|k^2-k|-8|k|)^2$ is, for large $k$, less than $1$ (since the degree of $|k|$ is $6$ in the denominator and $4$ in the numerator), while the condition of the theorem is $L>1$. Therefore, we cannot directly apply this theorem.

On the other hand, we can attempt to solve the following system
\begin{align}\label{e101}
(16k^3-4k)y^2-(k+1)z^2=16k^3-5k-1,\\ 
(16k^3-4k)x^2-(k-1)z^2=16k^3-5k+1\tag{\ref{e2}}
\end{align}
and define $\vartheta_1,\vartheta_2$ as
\[ \vartheta_1^2=1+\frac{1}{(k-1)(16k^3-4k)},\ \vartheta_2^2=1+\frac{1}{(k+1)(16k^3-4k)},\]
where the signs of $\vartheta_1$ and $\vartheta_2$  are chosen in the same manner as in Lemma \ref{thete12f}. In that case, by the notation of Jadrijević-Ziegler theorem, \[ a_1=k+1,\ a_2=k-1,\ T=(k^2-1)(16k^3-4k).\]
Remark 7.2 from \cite{Borka} shows that the condition $L > 1$ is fulfilled whenever
$|T| > (4M)^{3}$. Here, this inequality $ |(k^2-1)(16k^3-4k)|>|4(k+1)|^3$ holds for $k\geqslant 3.21 $.

Now we need to show that $\vartheta_1$ and $\vartheta_2$ can be approximated by the quotient of solutions (up to multiplication by some element of $\Q[i]$). More precisely, we will bound
$\ds \left|\vartheta_1-\frac{sx}{(k-1)z}\right|$  in the following lemma, where $s^2=(4k^2-2k-1)^2$, and a similar expression for $\vartheta_2$.

\begin{lemma}For $|k| \geqslant 5$,
\[ \max\left\{ \left|\vartheta_1^{(1)}-\frac{s(k+1)x}{(k-1)(k+1)z}\right|,\left|\vartheta_2^{(1)}-\frac{t(k-1)y}{(k-1)(k+1)z}\right| \right\}< 40|k|^2|z|^{-2},\]
where $t=4k^2+2k-1$.
\end{lemma}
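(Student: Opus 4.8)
The plan is to repeat, essentially verbatim, the computation in the proof of Lemma~\ref{thete12f}, now using \eqref{e2} to control $\vartheta_1^{(1)}$ and \eqref{e101} to control $\vartheta_2^{(1)}$. The starting point is the pair of identities
\[ (\vartheta_1^{(1)})^2 = 1+\frac{1}{(k-1)(16k^3-4k)} = \frac{s^2}{(k-1)(16k^3-4k)}, \qquad (\vartheta_2^{(1)})^2 = \frac{t^2}{(k+1)(16k^3-4k)}, \]
which hold because $(k-1)(16k^3-4k)+1 = s^2$ and $(k+1)(16k^3-4k)+1 = t^2$. After cancelling the common factor one has $\frac{s(k+1)x}{(k-1)(k+1)z} = \frac{sx}{(k-1)z}$, and a direct computation gives
\[ (\vartheta_1^{(1)})^2-\frac{s^2x^2}{(k-1)^2z^2} = \frac{s^2}{k-1}\left(\frac{1}{16k^3-4k}-\frac{x^2}{(k-1)z^2}\right) = \frac{s^2\bigl((k-1)z^2-(16k^3-4k)x^2\bigr)}{(16k^3-4k)(k-1)^2z^2} = \frac{-s^2(16k^3-5k+1)}{(16k^3-4k)(k-1)^2z^2}, \]
the last equality being \eqref{e2}; the analogous identity for $\vartheta_2^{(1)}$ uses \eqref{e101} and produces numerator $-t^2(16k^3-5k-1)$ and denominator $(16k^3-4k)(k+1)^2z^2$. (Here $z\neq 0$, since $z=0$ would force $16k^3-4k$ to divide $1$, impossible for $|k|\geqslant 5$, so all the quotients are well defined.)

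Next I would write
\[ \left|\vartheta_1^{(1)}-\frac{sx}{(k-1)z}\right| = \left|(\vartheta_1^{(1)})^2-\frac{s^2x^2}{(k-1)^2z^2}\right|\cdot\left|\vartheta_1^{(1)}+\frac{sx}{(k-1)z}\right|^{-1}, \]
and, exactly as in the proof of Lemma~\ref{thete12f}, use that the other square root is $\vartheta_1^{(2)}=-\vartheta_1^{(1)}$ together with the sign convention from the statement to obtain $\bigl|\vartheta_1^{(1)}+\tfrac{sx}{(k-1)z}\bigr| = \bigl|\vartheta_1^{(2)}-\tfrac{sx}{(k-1)z}\bigr| \geqslant \tfrac12\bigl|\vartheta_1^{(1)}-\vartheta_1^{(2)}\bigr| = |\vartheta_1^{(1)}|$. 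Since $|\vartheta_1^{(1)}|^2 = \bigl|1+\tfrac{1}{(k-1)(16k^3-4k)}\bigr| \geqslant 1-\tfrac{1}{|(k-1)(16k^3-4k)|} > 0.99^2$ for $|k|\geqslant 5$ (and likewise for $\vartheta_2^{(1)}$), this yields
\[ \left|\vartheta_1^{(1)}-\frac{s(k+1)x}{(k-1)(k+1)z}\right| \leqslant \frac{1}{0.99}\cdot\frac{|s|^2\,|16k^3-5k+1|}{|16k^3-4k|\,|k-1|^2}\cdot\frac{1}{|z|^2}, \]
and symmetrically $\left|\vartheta_2^{(1)}-\frac{t(k-1)y}{(k-1)(k+1)z}\right| \leqslant \frac{1}{0.99}\cdot\frac{|t|^2\,|16k^3-5k-1|}{|16k^3-4k|\,|k+1|^2}\cdot\frac{1}{|z|^2}$.

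Finally I would bound the two prefactors by the triangle inequality, using $|s|,|t|\leqslant 4|k|^2+2|k|+1$, $|16k^3-5k\pm1|\leqslant 16|k|^3+5|k|+1$, $|16k^3-4k|\geqslant 16|k|^3-4|k|$ and $|k\pm1|^2\geqslant(|k|-1)^2$. This reduces the lemma to the assertion that the resulting explicit rational function of $|k|$ stays below $40|k|^2$ for all $|k|\geqslant 5$; its leading term is $\sim 16|k|^2$, so this holds with a comfortable margin (for instance at $|k|=5$ the prefactor is already below $1000=40\cdot 5^2$), and clearing denominators turns it into a routine inequality between polynomials in $|k|$. This last step — confirming that the single constant $40$ is large enough uniformly from $|k|=5$ onward, not merely asymptotically — is the only part that needs care, and it is purely computational with no conceptual content.
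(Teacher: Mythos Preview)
Your proposal is correct and follows essentially the same route as the paper: factor the difference of squares, use the sign convention to replace the ``$+$'' branch by $\vartheta_i^{(2)}$ and bound it below by $|\vartheta_i^{(1)}|$, then reduce everything to an explicit rational function of $|k|$ and check it stays under $40|k|^2$ from $|k|=5$ onward. The only cosmetic difference is that the paper plugs in the exact closed form $|\vartheta_1^{(1)}| = |s|/\sqrt{|(k-1)(16k^3-4k)|}$ (which cancels one factor of $|s|$ and leaves a prefactor involving $\sqrt{|(k-1)(16k^3-4k)|}$), whereas you use the cruder numerical bound $|\vartheta_1^{(1)}|>0.99$; both lead to prefactors with leading behaviour $\sim 16|k|^2$ and the same routine polynomial check at the end.
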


\begin{proof}
\begin{align*}  &\left|\vartheta_1^{(1)}-\frac{sx}{(k-1)z}\right| = \\ &\quad = \left|(\vartheta_1^{(1)})^2-\frac{s^2x^2}{(k-1)^2z^2}\right|\cdot \left| \vartheta_1^{(1)} +\frac{sx}{(k-1)z} \right|^{-1}\\
&\quad=\left|1+\frac{1}{(k-1)(16k^3-4k)}-\frac{s^2x^2}{(k-1)^2z^2}\right|\cdot \left| \vartheta_1^{(2)} -\frac{sx}{(k-1)z} \right|^{-1}\\
&\quad= \left|\frac{(k-1)^2(16k^3-4k)z^2+(k-1)z^2-((k-1)(16k^3-4k)+1)(16k^3-4k)x^2}{(k-1)^2(16k^3-4k)z^2}\right|\cdot\\& \quad \quad \cdot \left| \vartheta_1^{(2)} -\frac{sx}{(k-1)z} \right|^{-1}\\
&\quad= \left|\frac{(k-1)(16k^3-4k)((k-1)z^2-(16k^3-4k)x^2)+(k-1)z^2-(16k^3-4k)x^2}{(k-1)^2(16k^3-4k)z^2}\right|\cdot\\& \quad \quad \cdot \left| \vartheta_1^{(2)} -\frac{sx}{(k-1)z} \right|^{-1}\\
&\quad= \left|\frac{((k-1)(16k^3-4k)+1)((k-1)z^2-(16k^3-4k)x^2)}{(k-1)^2-(16k^3-4k)z^2}\right|\cdot \left| \vartheta_1^{(2)} -\frac{sx}{(k-1)z} \right|^{-1}\\
&\,\, \overset{\eqref{e2}}{=}\left|\frac{s^2((k-1)-(16k^3-4k))}{(k-1)^2(16k^3-4k)}\right|\cdot \left| \vartheta_1^{(2)} -\frac{sx}{(k-1)z} \right|^{-1}\cdot|z|^{-2}.
\end{align*}
Since $t^2=(k+1)(16k^3-4k)+1=(4k^2+2k-1)^2$, in the same way it follows that
\[ \left|\vartheta_2^{(1)}-\frac{ty}{(k+1)z}\right|=\left|\frac{t^2((k+1)-(16k^3-4k))}{(k+1)^2(16k^3-4k)}\right|\cdot \left| \vartheta_2^{(2)} -\frac{ty}{(k+1)z} \right|^{-1}\cdot|z|^{-2}. \]
Analogously as before,
\[  \left| \vartheta_1^{(2)} -\frac{sx}{(k-1)z} \right| \geqslant \frac 12 |\vartheta_1^{(1)}-\vartheta_1^{(2)}| =\left|\sqrt{1+\frac{1}{(k-1)(16k^3-4k)}} \right|=\left|\frac{4k^2-2k-1}{\sqrt{(k-1)(16k^3-4k)}}\right|,\] 
\[  \left| \vartheta_2^{(2)} -\frac{ty}{(k+1)z} \right| \geqslant \frac 12 |\vartheta_2^{(1)}-\vartheta_2^{(2)}| =\left|\sqrt{1+\frac{1}{(k+1)(16k^3-4k)}} \right|=\left|\frac{4k^2+2k-1}{\sqrt{(k+1)(16k^3-4k)}}\right|.\]
Now
\begin{align*}
& \left|\frac{s^2((k-1)-(16k^3-4k))}{(k-1)^2(16k^3-4k)}\right|\cdot \left| \vartheta_1^{(2)} -\frac{sx}{(k-1)z} \right|^{-1} \leqslant \\
&\quad \left|\frac{s^2((k-1)-(16k^3-4k))}{(k-1)^2(16k^3-4k)}\right| \cdot \left|\frac{4k^2-2k-1}{\sqrt{(k-1)(16k^3-4k)}}\right|^{-1}\\
&\quad = \left|\frac{(4k^2-2k-1)^2(16k^3-5k+1)}{(k-1)^2(16k^3-4k)}\right|\cdot\left|\frac{\sqrt{(k-1)(16k^3-4k)}}{4k^2-2k-1}\right|\\
&\quad = \left|\frac{(4k^2-2k-1)(16k^3-5k+1)}{(k-1)^2(16k^3-4k)}\right|\cdot |\sqrt{(k-1)(16k^3-4k)}| \\
&\quad = \left|\frac{64 k^5 - 32 k^4 - 36 k^3 + 14 k^2 + 3 k - 1}{16 k^5 - 32 k^4 + 12 k^3 + 8 k^2 - 4 k}\right|\cdot\sqrt{|k-1|\cdot|16k^3-4k|}\\
&\quad \leqslant \frac{64|k|^5+32|k|^4+36|k|^3+14|k|^2+3|k|+1}{16|k|^5-32|k|^4-12|k|^3-8|k|^2-4|k|} \sqrt{16|k|^4+16|k|^3+4|k|^2+|k|}\\
&\quad \leqslant \frac{64|k|^5+32|k|^4+36|k|^3+14|k|^2+3|k|+1}{16|k|^5-32|k|^4-12|k|^3-8|k|^2-4|k|}\cdot (4|k|^2+2|k|+1)
\end{align*} 
The last used inequality is easily proven by squaring it. It suffices to show that $(64|k|^5+32|k|^4+36|k|^3+14|k|^2+3|k|+1)(4|k|^2+2|k|+1)\leqslant 40|k|^2(16|k|^5-32|k|^4-12|k|^3-8|k|^2-4|k|)$, which is equivalent to $384 |k|^7 - 1536 |k|^6 - 752 |k|^5 - 480 |k|^4 - 236 |k|^3 - 24 |k|^2 - 5 |k| - 1 \geqslant 0$. Since $384 |k|^7 \geqslant 1920 |k|^6$ for $|k| \geqslant 5$, so it  suffices to show that $384|k|^6-752 |k|^5 - 480 |k|^4 - 236 |k|^3 - 24 |k|^2 - 5 |k| - 1\geqslant 0$. By repeating this argument, we get the proof of the desired inequality.
\end{proof}


We now show that $\ds 2l = 2\cdot\frac{27}{64}\frac{|T|}{|T|-M} < 1$. This is equivalent to $27|T| <32|T|-32M$, i.~e. $ 32M < 5|T|$.
Since $M$ is the larger among the numbers $|k-1|$ and $|k+1|$, and both of them are less or equal to $|k|+1$ (by the triangle inequality), it follows that $M\leqslant |k|+1$. Therefore, we can show that $32(|k|+1) < 5|16k^5-20k^3+4|$, which holds for $|k| \geqslant 1.33$. Namely, $5|16k^5-20k^3+4| \geqslant 80|k|^5-100|k|^3-20$, so it suffices to show that $80|k|^5-100|k|^3-32|k|-52 > 0$, which holds for $k$ with large absolute value.

Now $ c = \frac{1}{4pP}, L = \frac{27(|T|-M)^2}{64|k^2-1|^2}, p = \sqrt{1+\frac{5M}{2|T|-2M}}, P = 8(2|T|+3M)|k^2-1|^2$, $q=(k-1)(k+1)z$.

\noindent If we try to apply the Jadrijević-Ziegler theorem, then
\[ \lambda = 1+ \frac{\log 8 + \log(2|T|+3M)+2\log|k^2-1|}{\log 27 + 2\log{(|T|-M)}-\log{64|k^2-1|^2}}, \]
and let \[ \Max = \max\left\{ \left|\vartheta_1^{(1)}-\frac{s(k+1)x}{(k-1)(k+1)z}\right|,\left|\vartheta_2^{(1)}-\frac{t(k-1)y}{(k-1)(k+1)z}\right| \right\}.  \]
Then \[ \Max > \frac{1}{4pP} |q|^{-\lambda} = \frac{\sqrt{2|T|-2M}}{32\sqrt{2|T|+3M}(2|T|+3M)|k^2-1|^2} |q|^{-\lambda}. \]

Since $M < \frac{5}{32} |T|$, it follows that $2|T|-2M > \frac{27}{16}|T|$, and $2|T|+3M < \frac{79}{32} |T| < \frac 52 |T|$. Hence
\[ \Max > \frac{\sqrt{\frac{27}{16}|T|}}{32\cdot\frac 52 \sqrt{\frac 52} |T|^{\frac 32}|k^2-1|^2} |q|^{-\lambda}= \frac{3\sqrt{3}}{160\sqrt{10}}\frac{ |T|^{-1}|q|^{-\lambda}}{|k^2-1|^2} = C |k^2-1|^{-\lambda-3} |16k^3-4k|^{-1} |z|^{-\lambda}, \] where $C = \frac{3\sqrt{3}}{160\sqrt{10}}$.
Now we can conclude that $ C |k^2-1|^{-\lambda-3} |16k^3-4k|^{-1} |z|^{-\lambda} < 40|k|^2 |z|^{-2}$, i.~e.
\[ |z|^{2-\lambda} < \frac{40}{C} |k|^2 |k^2-1|^{\lambda+3} |16k^3-4k| = \frac{6400\sqrt{10}}{3\sqrt{3}} |k|^2 |k^2-1|^{\lambda+3} |16k^3-4k|. \]

This inequality can be used to bound the magnitude of solution $|z|$ when $\lambda < 2$, because the left-hand side is then a positive power of $|z|$. 
The proof for lower bound on $|x|$ is easily modified for $|z|$. It is not hard to see that $|z| \geqslant |x|$, so we could use the same lower bound. Since this lower bound is exponential in $|k|$, if $\lambda$ were less than $2$, then we would get a polynomial upper bound for $|z|$ and juxtaposition of these two bounds would give us the upper bound for $|k|$. Unfortunately, $\lambda > 2$ here. Namely, this claim is equivalent to $P > L$ and $ 8(2|T|+3M)|k^2-1|^2 > \frac{27(|T|-M)^2}{64|k^2-1|^2}$, and $512(2|(k^2-1)(16k^3-4k)|+3M)|k^2-1|^4 > 27(|(k^2-1)(16k^3-4k)|-M)^2$. Since $M=\max\{|k-1|, |k+1|\}$ is linear in $k$, we can already see that the degree of $k$  is greater in the left-hand side ($13>10$). More precisely, left-hand side is
$512(2|(k^2-1)(16k^3-4k)|+3M)|k^2-1|^4 \geqslant  512(32|k|^5-40|k|^3-11|k|-3)(|k|^2-1)^4$, while $27(|(k^2-1)(16k^3-4k)|-M)^2 < 27(16|k|^5+20|k|^3+4|k|)^2$. It suffices to check that
\begin{align*}
& 16384 |k|^{13} - 86016 |k|^{11} - 6912 |k|^{10} + 174592 |k|^9 - 18816 |k|^8 - 165888 |k|^7 - 8112 |k|^6 +\\
& \quad + 64512 |k|^5 -13536 |k|^4 + 2048 |k|^3 + 5712 |k|^2 - 5632 |k| - 1536 > 0,
\end{align*}

\noindent which holds for $|k| \geqslant 1.82$. 

To conclude, the gap between $16k^3-4k$ and $k+1$ is not large enough for exponent $\lambda$ to be less than $2$, and this makes it unlikely to use the usual approach by Diophantine approximation.

We note here that the similar problem of extending $D(4)$-triple $\{k'-2, k'+2, 4(k')^3-4k'\}$ in rational integers was studied in \cite{ljubica}. For even $k'=2k$, dividing by $2$, we get $D(1)$-triples having the same form as the triples studied in this paper. In \cite{ljubica}, problem was solved using a similar method we tried to apply here. An amelioration of the analogous theorem in $\Z$ was proven there for a specific situation (where numerators under the square root in $\theta_i$ equal exactly $k-2$ and $k+2$, while the denominator is divisible by $k^2-4$).

\section{Application of linear forms in logarithms to the family $\{k-1, k+1, 16k^3-4k\}$}
We continue dealing with the extensibility problem of Diophantine triples $\{k-1, k+1, 16k^3-4k\}$. Sequence $(V_n)_n$ is defined as in \eqref{eq:nizv4}, while $(W_m^{(i)})_m$ is defined in Lemma \ref{lema:nizw}. Let us remind ourselves that for this family, $a=k-1, b=k+1, c=16k^3-4k$ and $r=k, s= 4k^2-2k-1, t=4k^2+2k+1$.
\begin{lemma}\label{lema:vmn} If $V_n=\pm W_m^{(j)}$ for some $j, m, n \in \N_0$ and $|k|>2.5$, then $m\leqslant n \leqslant 3m+2$.
\end{lemma}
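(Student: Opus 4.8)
The plan is to compare the moduli $|V_n|$ and $|W_m^{(j)}|$: from $V_n=\pm W_m^{(j)}$ we have $|V_n|=|W_m^{(j)}|$, so it suffices to trap each side between suitable powers of $2|k|$-type and $8|k|^2$-type quantities and then read off the relation between $m$ and $n$. First I would control $(V_n)_n$. From $V_0=1$, $V_1=2k-1$, the recurrence \eqref{eq:nizv4}, and the (reverse) triangle inequality, an easy induction shows that $(|V_n|)_n$ is non-decreasing for $|k|\geqslant 1$ (indeed $|V_{n+2}|\geqslant 2|k|\,|V_{n+1}|-|V_n|\geqslant (2|k|-1)|V_{n+1}|\geqslant |V_{n+1}|$). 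Feeding this back into the recurrence gives the one-step bounds $(2|k|-1)|V_{n+1}|\leqslant |V_{n+2}|\leqslant (2|k|+1)|V_{n+1}|$, and with $2|k|-1\leqslant |V_1|\leqslant 2|k|+1$ this yields
\[ (2|k|-1)^n\leqslant |V_n|\leqslant (2|k|+1)^n\qquad(n\geqslant 0). \]
For $(W_m^{(j)})_m$ the lower bound $|W_m^{(j)}|\geqslant (8|k|^2-4|k|-3)^{m-1}$ is already available from Lemma \ref{lema:kongrvw}.

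For the inequality $m\leqslant n$, combining these two estimates gives $(2|k|+1)^n\geqslant |V_n|=|W_m^{(j)}|\geqslant (8|k|^2-4|k|-3)^{m-1}$, and since $8|k|^2-4|k|-3\geqslant (2|k|+1)^2$ for $|k|>1+\sqrt{2}$ (hence for $|k|>2.5$), this forces $n\geqslant 2m-2$, which already gives $n\geqslant m$ once $m\geqslant 2$. The two remaining cases are immediate: $m=0$ is trivial, and for $m=1$ a glance at the six initial values in Lemma \ref{lema:nizw} shows $|W_1^{(j)}|>1=|V_0|$ whenever $|k|>2.5$, so $n\neq 0$ and hence $n\geqslant 1=m$.

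For the inequality $n\leqslant 3m+2$, I would first establish the upper bound
\[ |W_m^{(j)}|\leqslant (2|k|-1)^2\,(8|k|^2+4|k|+3)^m\qquad(m\geqslant 0,\ 1\leqslant j\leqslant 6) \]
by induction on $m$. The base case $m=0$ reduces to $|W_0^{(j)}|\leqslant |2k-1|\leqslant (2|k|-1)^2$; the base case $m=1$ reduces to $|W_1^{(j)}|\leqslant 16|k|^3+16|k|^2+|k|+2\leqslant (2|k|-1)^2(8|k|^2+4|k|+3)$ (each of the six values $W_1^{(j)}$ has modulus at most $16|k|^3+16|k|^2+|k|+2$, with $j=5$ the worst); both amount, after clearing denominators, to polynomial inequalities in $|k|$ that hold for $|k|>2.5$. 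For the inductive step, write $A=8|k|^2+4|k|+3$; from $|W_{m+2}^{(j)}|\leqslant 2|4k^2-2k-1|\,|W_{m+1}^{(j)}|+|W_m^{(j)}|$ and $2|4k^2-2k-1|\leqslant A-1$ the claim comes down to $(A-1)A+1\leqslant A^2$, i.e.\ $A\geqslant 1$. Finally, since $(2|k|-1)^3\geqslant 8|k|^2+4|k|+3$ for $|k|>2.5$ (equivalently $4|k|^3-10|k|^2+|k|-2\geqslant 0$), we get
\[ (2|k|-1)^n\leqslant |V_n|=|W_m^{(j)}|\leqslant (2|k|-1)^2\bigl((2|k|-1)^3\bigr)^m=(2|k|-1)^{3m+2}, \]
and $n\leqslant 3m+2$ follows because $2|k|-1>1$.

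The main obstacle is the bookkeeping with constants near the boundary $|k|=2.5$. The cubic inequality $(2|k|-1)^3\geqslant 8|k|^2+4|k|+3$ is essentially tight there (equality occurs around $|k|\approx 2.48$), so a crude estimate such as $|W_m^{(j)}|\leqslant(\text{const}\cdot|k|^2)^{m}\,|W_1^{(j)}|$ with $|W_1^{(j)}|$ of size $|k|^3$ would only yield $n\leqslant 3m+O(\log|k|)$. Getting the clean additive constant $2$ forces the sharper bound $|W_m^{(j)}|\leqslant (2|k|-1)^2A^m$, and the delicate step is verifying its two base cases for \emph{every} $|k|>2.5$ rather than only for large $|k|$; this, together with $|k|>1+\sqrt2$ needed in the proof of $m\leqslant n$, is what pins down the hypothesis $|k|>2.5$.
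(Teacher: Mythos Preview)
Your argument is correct and follows essentially the same approach as the paper: bound $|V_n|$ between $(2|k|-1)^n$ and $(2|k|+1)^n$, bound $|W_m^{(j)}|$ between $(8|k|^2-4|k|-3)^{m-1}$ and a quantity of size roughly $(8|k|^2+4|k|+3)^{m+1}$, and then compare using the key inequality $(2|k|-1)^3\geqslant 8|k|^2+4|k|+3$ for $|k|>2.5$. The only differences are cosmetic: the paper uses the slightly cruder upper bound $|W_m^{(j)}|\leqslant(8|k|^2+4|k|+3)^{m+1}$ and argues both inequalities by contradiction, whereas you prove the sharper $|W_m^{(j)}|\leqslant (2|k|-1)^2(8|k|^2+4|k|+3)^m$ and argue directly; your treatment of the boundary cases $m\in\{0,1\}$ for $m\leqslant n$ is also a bit more explicit than the paper's.
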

\begin{proof}
Reccurence relations and Lemma \ref{lema:kongrvw} inductively imply the following inequalities 
\begin{align*}
(2|k|-1)^n &\leqslant |V_n| \leqslant (2|k|+1)^n \\
(8|k|^2-4|k|-3)^{m-1} &\leqslant |W_m^{(j)}| \leqslant (8|k|^2+4|k|+3)^{m+1}.
\end{align*}
If $V_n=W_m$, then $(2|k|+1)^n \geqslant (8|k|^2-4|k|-3)^{m-1}$, so $n\geqslant m$. If we assume the contrary, $n\leqslant m-1$, then $8|k|^2-4|k|-3 \leqslant 2|k|+1$, which creates a contradiction when $|k| > 2.5$.

We now assume $n\geqslant 3m+3$. From $V_n=W_m$ it follows that $(8|k|^2+4|k|+3)^{m+1} \geqslant (2|k|-1)^n\geqslant (2|k|-1)^{3m+3}$, so $8|k|^2+4|k|+3 > (2|k|-1)^3=8|k|^3-12|k|^2+6|k|-1$. This implies that $-2 (4 |k|^3 - 10 |k|^2 + |k| - 2) > 0$, which is impossible for $|k| > 2.5$.
\end{proof}

By solving the reccurence relations defining $(V_n)$ and $(W_m)$, we get that
\begin{align*} V_n &= \frac{\sqrt{k+1}+\sqrt{k-1}}{2\sqrt{k+1}}(k+\sqrt{k^2-1})^n + \frac{\sqrt{k+1}-\sqrt{k-1}}{2\sqrt{k+1}}(k-\sqrt{k^2-1})^n,\\
W_m &= \frac{1}{2\sqrt{16k^3-4k}} \big( (x_1\sqrt{16k^3-4k}+z_1 \sqrt{k-1})(4k^2-2k-1+\sqrt{(16k^3-4k)(k-1)})^m  +\\
& \, + (x_1\sqrt{16k^3-4k}-z_1 \sqrt{k-1})(4k^2-2k-1-\sqrt{(16k^3-4k)(k-1)})^m \big).
\end{align*}

Let $\ds P'=\frac{1}{\sqrt{c}}(x_1\sqrt{c}+z_1 \sqrt{a})(s+\sqrt{ac})^m$  i $\ds Q'= \frac{1}{\sqrt{b}}(\sqrt{a}+\sqrt{b})(r+\sqrt{ab})^n$. 
We remark that $Q' \neq Q$ and $P'=\sqrt{\frac{a}{c}}P$. However, with $m\geqslant 3$, we have the same bounds on $Q'$ and $|P'|-|Q'|$. They are obtained in a similar manner:
\begin{align*} |Q'| &=\frac{1}{\sqrt{|b|}}|\sqrt{a}+\sqrt{b}|\cdot|r+\sqrt{ab}|^n \geqslant \frac{|b-a|}{\sqrt{|b|}}\cdot\frac{1}{|\sqrt{b}-\sqrt{a}|}|\sqrt{ab}|^3 \\
&\geqslant \frac{2}{\sqrt{|b|}}\cdot\frac{1}{|\sqrt{b}|+|\sqrt{a}|}|ab|^{3/2}
\geqslant 12\frac{|b|}{|a|},
\end{align*}
since $|a|^{5/2} \geqslant 6(\sqrt{|b|}+\sqrt{|a|})$, i.~e.~$|k+1|^{5/2} \geqslant 6(\sqrt{|k+1|}+\sqrt{|k-1|})$, which holds for $|k|\geqslant 4.846$. If $|k|\geqslant 23$, this implies that $|Q'|\geqslant 11$ since $12\frac{|k+1|}{|k-1|}\geqslant 12 \frac{|k|-1}{|k|+1} \geqslant 11$, which is equivalent to $|k|\geqslant 23$. Similarly, it holds that $|P'|\geqslant 12$ so
\begin{align*}
||P'|-|Q'|| &\leqslant \left| \frac{c-a}{c}(P')^{-1}-\frac{b-a}{b}(Q')^{-1}\right| \leqslant \left|1-\frac{a}{c}\right|\frac{1}{|P'|}+\left|1-\frac{a}{b}\right|\frac{1}{|Q'|} \\
&\leqslant \frac{1}{12}\left|1-\frac{a}{c}\right|+\frac{1}{11}\frac{2}{|b|} < \frac{5}{48}+\frac{5}{48} = \frac{5}{24},
\end{align*}

\noindent for $|b|\geqslant |k|-1 > \frac{96}{55}$. Therefore, the conclusion of  Lemma \ref{lforma} holds for the linear form $\Gamma = \log\Lambda'=\log\frac{|P'|}{|Q'|}$ as well.  

\subsection{Minimal polynomials}
Let $k=\mu+i\nu$ and
\begin{align*}
\alpha_1 &= |k+\sqrt{k^2-1}|, \\
\alpha_2 &= |4k^2-2k-1+\sqrt{(16k^3-4k)(k-1)}| \text{    i} \\
\alpha_3 &= \left|\frac{\sqrt{16k^3-4k}(\sqrt{k-1}+\sqrt{k+1})}{\sqrt{k+1}(x_1\sqrt{16k^3-4k}+z_1\sqrt{k+1})}\right|.
\end{align*} 

The minimal polynomial for $\alpha_1$ is $p_1(x)=x^8-4(\mu^2+\nu^2)x^6+(8\mu^2-8\nu^2-2)x^4-4(\mu^2+\nu^2)x^2+1$, according to \cite{Zrinka}. In the same paper, it was shown that $h(\alpha_1) \leqslant \frac 14\log{(2|k|+1)}$.

The minimal polynomial for $\alpha_2$ is determined with the help of Mathematica \cite{w:m11}, 
\begin{align*}
 p_2(x) =& \, x^8-4\left((16(\mu^2+\nu^2)-16\mu-4)(\mu^2+\nu^2)+16\nu^2+4\mu+1\right)x^6+\\
 & \, + \left((128\mu^4-128\mu^3-32\mu^2+32\mu+6)+(-768\mu^2+384\mu+32)\nu^2+128\nu^4\right)x^4\\
 & \, - 4\left((16(\mu^2+\nu^2)-16\mu-4)(\mu^2+\nu^2)+16\nu^2+4\nu+1)\right)x^2+1.
\end{align*}

Polynomial $p_2(x)$ has the following zeroes:
\begin{align*} 
x_{1,2} &= \pm\alpha_2, &x_{5,6} = \pm\sqrt{|s|^2 - |ac| + \sqrt{(|s|^2 - |ac|)^2 - 1}},\\
x_{3,4} &= \pm |s - \sqrt{ac}|, &x_{7,8} = \pm\sqrt{|s|^2 - |ac| - \sqrt{(|s|^2 - |ac|)^2 - 1}},
\end{align*}
and $|x_i|=1$ for $i=5, 6, 7,  8$. It follows that \[ h(\alpha_2) \leqslant \frac 18 \log{|x_1||x_2|} = \frac 14 \log|4k^2-2k-1+\sqrt{(16k^3-4k)(k-1)}|, \]
which implies $h(\alpha_2) \leqslant \frac 14 \log|9k^2| = \frac 12 \log 3|k|$.
Polynomial $p_2(x)$ has the following zeroes $x_{1,2} = \pm\alpha_2$, $\ds x_{3,4} = \pm |s - \sqrt{ac}|$, $\ds x_{5,6} = \pm\sqrt{|s|^2 - |ac| + \sqrt{(|s|^2 - |ac|)^2 - 1}}$ and $x_{7,8} = \pm\sqrt{|s|^2 - |ac| - \sqrt{(|s|^2 - |ac|)^2 - 1}}$,
and $|x_i|=1$ for $i=5, 6, 7,  8$. This implies that \[ h(\alpha_2) \leqslant \frac 18 \log{|x_1||x_2|} = \frac 14 \log|4k^2-2k-1+\sqrt{(16k^3-4k)(k-1)}|, \]
and, consequently $h(\alpha_2) \leqslant \frac 14 \log|9k^2| = \frac 12 \log 3|k|$.

\subsection{Bounding the conjugates of $\alpha_3$}
\begin{lemma}\label{lem:conj} If $|k|\geqslant 10^7$, then, for all conjugates $\alpha_3'$ of $\alpha_3$, it holds that $|\alpha_3'|\leqslant |k|^4$.
\end{lemma}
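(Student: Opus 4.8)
The plan is to realise $\alpha_3$ as the modulus of an explicit algebraic number and then to bound all of its conjugates by running through the finitely many embeddings of the underlying splitting field. Keeping $a=k-1$, $b=k+1$, $c=16k^3-4k$, write $\gamma=\dfrac{\sqrt c\,(\sqrt a+\sqrt b)}{\sqrt b\,(x_1\sqrt c+z_1\sqrt b)}$, so that $\alpha_3=|\gamma|$ and $\alpha_3^2=\gamma\overline\gamma$ lies in the number field $L=\Q\bigl(i,\sqrt a,\sqrt b,\sqrt c,\sqrt{\overline a},\sqrt{\overline b},\sqrt{\overline c}\bigr)$ (the redundant generators being discarded when $k\in\Z$). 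A conjugate of $\alpha_3$ is a square root of a conjugate of $\gamma\overline\gamma$, and each conjugate of $\gamma\overline\gamma$ equals $\sigma(\gamma)\,\sigma(\overline\gamma)$ for some embedding $\sigma\colon L\hookrightarrow\Cc$; hence
\[ |\alpha_3'|=\bigl(|\sigma(\gamma)|\,|\sigma(\overline\gamma)|\bigr)^{1/2}\leqslant\max\bigl\{\,|\sigma(\gamma)|,\ |\sigma(\overline\gamma)|\,\bigr\}, \]
so it suffices to bound $|\sigma(\gamma)|$ — and, by an identical computation, $|\sigma(\overline\gamma)|$ — for every $\sigma$.

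Each embedding $\sigma$ of $L$ is determined by the image of $i$ (equivalently, by whether each of $a,b,c$ is kept or replaced by its complex conjugate) together with an independent sign on each of $\sqrt a,\sqrt b,\sqrt c$ and of their conjugates. Since $\sqrt c$ occurs once in the numerator of $\gamma$ and once inside the bracket of the denominator, its sign cancels, and collecting the remaining signs gives
\[ |\sigma(\gamma)|=\frac{\sqrt{|c|}\ \bigl|\sqrt{a^{\ast}}\pm\sqrt{b^{\ast}}\bigr|}{\sqrt{|b|}\ \bigl|x_1^{\ast}\sqrt{c^{\ast}}\pm z_1^{\ast}\sqrt{b^{\ast}}\bigr|}, \]
where $\ast$ denotes the substitution $k\mapsto k$ or $k\mapsto\overline k$ (so $|a^\ast|=|a|$, and likewise for $b,c$), the two occurrences of $\pm$ are independent, and $(x_1^\ast,z_1^\ast)$ is the fundamental pair of Lemma~\ref{lema:nizw} with $k$ replaced accordingly, of the same modulus as $(x_1,z_1)$. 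Since $|\overline k|=|k|$ and $|k|\geqslant10^7>17$, it is enough to show that for both signs and every $(x_1,z_1)\in\{(1,1),\,(k,4k^2+2k-1),\,(2k-1,8k^2-1)\}$ one has $E\leqslant|k|^4$, where
\[ E:=\frac{\sqrt{|c|}\ \bigl|\sqrt a\pm\sqrt b\bigr|}{\sqrt{|b|}\ \bigl|x_1\sqrt c\pm z_1\sqrt b\bigr|}. \]

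The numerator is controlled cheaply: $\sqrt{|c|}=\sqrt{|16k^3-4k|}\leqslant\sqrt{17}\,|k|^{3/2}$ and $\bigl|\sqrt a\pm\sqrt b\bigr|\leqslant\sqrt{|a|}+\sqrt{|b|}\leqslant2\sqrt2\,|k|^{1/2}$, so the numerator is at most $2\sqrt{34}\,|k|^2$. The crux is a lower bound for the denominator. The two sign choices multiply to $\bigl|x_1\sqrt c+z_1\sqrt b\bigr|\cdot\bigl|x_1\sqrt c-z_1\sqrt b\bigr|=\bigl|x_1^2c-z_1^2b\bigr|$, which by a direct computation is an explicit polynomial in $k$ — it equals $c-b$ in the first case, and has degree $4$ with leading coefficient $-32$, resp.\ $-128$, in the other two — and in every case $\bigl|x_1^2c-z_1^2b\bigr|\geqslant15|k|^3$ for $|k|$ large. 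On the other hand the larger of the two factors is at most $|x_1|\sqrt{|c|}+|z_1|\sqrt{|b|}\leqslant C|k|^{5/2}$ for an absolute constant $C$ (the pair $(2k-1,8k^2-1)$ being the worst). Hence $\bigl|x_1\sqrt c\pm z_1\sqrt b\bigr|\geqslant(15/C)|k|^{1/2}$ for both signs, and combined with $\sqrt{|b|}\geqslant\sqrt{|k|-1}$ this yields $E=O(|k|)$; in particular $E\leqslant|k|^4$ for all $|k|\geqslant10^7$ — indeed for every $|k|$ past a small absolute constant, the threshold $10^7$ being retained only for consistency with the rest of the paper.

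The single genuine difficulty is precisely the denominator: an embedding may flip the sign of $\sqrt b$ (or of $\sqrt c$) and so replace the ``large'' factor $x_1\sqrt c+z_1\sqrt b$ by its nearly cancelling conjugate $x_1\sqrt c-z_1\sqrt b$, so this factor cannot be bounded below by an absolute constant; it is the explicit factorisation $\bigl(x_1\sqrt c+z_1\sqrt b\bigr)\bigl(x_1\sqrt c-z_1\sqrt b\bigr)=x_1^2c-z_1^2b$, a polynomial in $k$ with leading coefficient bounded away from $0$, that pins down how small it can get. Everything else is routine estimation of polynomials in $|k|$, with a wide margin to spare.
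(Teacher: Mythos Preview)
Your argument is correct and considerably tidier than the paper's. You exploit that $\alpha_3^2=\gamma\bar\gamma$ lies in the explicit multiquadratic extension $L$, so every conjugate of $\alpha_3^2$ is $\sigma(\gamma)\sigma(\bar\gamma)$ for some embedding $\sigma$ of $L$; the finitely many embeddings are parametrised by sign-flips on the square roots together with the restriction to $\Q(i)$, and you bound the resulting $|\sigma(\gamma)|$ directly. The only nontrivial step is the lower bound on the denominator, which you extract from the product identity $|x_1\sqrt c+z_1\sqrt b|\cdot|x_1\sqrt c-z_1\sqrt b|=|x_1^2c-z_1^2b|$, a polynomial in $k$ of degree $\geqslant 3$ with leading coefficient bounded away from zero. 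This yields $|\alpha_3'|=O(|k|)$, much sharper than needed.

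The paper proceeds quite differently: it writes down all $32$ (candidate) conjugates explicitly---eight in closed form and the remaining $24$ as roots of six auxiliary quartics $q_1,\dots,q_6$ whose coefficients are built from $|a|,|b|,|c|,|x_1|,|z_1|$---and then bounds each root by the crude estimate ``zero of a monic polynomial $\leqslant$ sum of absolute values of its coefficients''. That route gives only $|\alpha_3'|^2\leqslant 1028|k|^7$, and it is precisely here that the threshold $|k|\geqslant 10^7$ is actually used. Your Galois-theoretic reduction avoids the bookkeeping of six auxiliary polynomials and produces a tighter bound; the paper's approach is more computational but needs no field-theoretic language.

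One small presentational point: your sentence ``its sign cancels'' about $\sqrt c$ is not literally accurate---flipping $\sqrt c\mapsto-\sqrt c$ toggles the sign inside the denominator bracket rather than cancelling against the numerator---but since you already allow both signs there independently (via the other square roots), the displayed formula for $|\sigma(\gamma)|$ with two independent $\pm$ is correct as stated.
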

\begin{proof}
One can guess the minimal polynomial for $\alpha_3$ and all conjugates. The first eight are $x_{1,2}' = \pm \alpha_3$, $\ds x_{3, 4}= \pm \left| \frac{\sqrt{c}(\sqrt{b}+\sqrt{a})}{\sqrt{b}(x_1\sqrt{c}-z_1\sqrt{a})}\right|$, $\ds
x_{5,6}= \pm \left| \frac{\sqrt{c}(\sqrt{b}-\sqrt{a})}{\sqrt{b}(x_1\sqrt{c}-z_1\sqrt{a})}\right|$, $\ds x_{7,8}= \pm \left| \frac{\sqrt{c}(\sqrt{b}-\sqrt{a})}{\sqrt{b}(x_1\sqrt{c}-z_1\sqrt{a})} \right|$.
\vskip 0.4em
Furthermore, $x_9, \dotsc, x_{12}$ are zeroes of
\[ q_1(x) = x^4-2\left|\frac{c}{b(x_1\sqrt{c}+z_1\sqrt{a})^2}\right|(|b|-|a|)x^2+\left|\frac{c(b-a)}{b(x_1\sqrt{c}+z_1\sqrt{a})^2}\right|^2, \]
$x_{13},\dots, x_{16}$ of
\[ q_2(x) = x^4-2\left|\frac{c}{b(x_1\sqrt{c}-z_1\sqrt{a})^2}\right|(|b|-|a|)x^2+\left|\frac{c(b-a)}{b(x_1\sqrt{c}-z_1\sqrt{a})^2}\right|^2, \]
the next eight of
\[ q_3(x) = x^4-2\left|\frac{c(\sqrt{b}+\sqrt{a})^2}{b(c-a)^2}\right|(|cx_1^2|-|az_1^2|)x^2+\left|\frac{c(\sqrt{b}-\sqrt{a})^2}{b(c-a)}\right|^2 \quad \text{  and}\]
\[ q_4(x)  = x^4-2\left|\frac{c(\sqrt{b}-\sqrt{a})^2}{b(c-a)^2}\right|(|cx_1^2|-|az_1^2|)x^2+\left|\frac{c(\sqrt{b}-\sqrt{a})^2}{b(c-a)}\right|^2, \]
then
\[ q_5(x)= x^4-2\left|\frac{c}{b(c-a)^2}\right|(|x_1\sqrt{bc}+z_1a|^2-|x_1\sqrt{ac}+z_1\sqrt{ab}|^2)x^2+\left|\frac{c(b-a)}{b(c-a)}\right|^2 \]
and finally
\[ q_6(x) = x^4-2\left|\frac{c}{b(c-a)^2}\right|(|x_1\sqrt{bc}-z_1a|^2-|x_1\sqrt{ac}-z_1\sqrt{ab}|^2)x^2+\left|\frac{c(b-a)}{b(c-a)}\right|^2 . \]
\vskip -0.5em

This suffices to find the bound we need here. Namely, the zero of the monic polynomial is bounded from above by the sum of the absolute values of its coefficients. For this polynomial, we can see that the coefficients have at most the order of $|c|^2 \cdot |a|$ (or $\cdot |b|$). More precisely, we will show that all the coefficients of $x^2$-terms are less than $3|k|^7$, while all the free coefficients are less than $1025|k|^7$ for $k$ large enough.

The coefficient of $x^2$ in $q_1$ and $q_2$ is less than or equal to $2|c|(|b|+|a|)\leqslant 2|16k^3-4k|(2|k|+2)\leqslant |k|^5$ for $|k|\geqslant 65$. This type of claim is proven as earlier in the paper, by using the triangle inequality and analysing the obtained functions of $|k|$. The coefficient of $x^2$ in $q_3$ and $q_4$ is less than or equal to 
\[ \frac{2|c|(\sqrt{|a|}+\sqrt{|b|})(|cx_1^2|+|az_1^2|)}{|b|^2|c-a|} \leqslant \frac{2|16k^3-4k|\cdot2\sqrt{|k|+1}(150|k|^5+65)}{(|k|-1)^2(16|k|^3-5|k|-1)} <  |k|^4 \, \text{        for } |k|\geqslant 1.45\cdot10^6 .\]

\noindent Similarly, the coefficient of $x^2$ in $q_5$ and $q_6$ is less than $3|k|^7$ for $|k|\geqslant 2$.

The free coefficients are less than $|16k^3-4k|^2 (2\sqrt{|k|+1})^2 \leqslant 1025|k|^7$ for $|k|\geqslant 1025$.

Therefore, $|\alpha_3'|^2\leqslant 1028|k|^7$ for every conjugate $\alpha_3'$, i.~e.~$|\alpha_3'| \leqslant |k|^4$ for $|k|\geqslant 10^7$.

\end{proof}
\subsection{The final result}
\quad \\ We denote Mahler measure as $M(\alpha)$ and logarithmic Weil's height as $h(\alpha)$.
\begin{lemma}\label{lem:bw}
If $|k|\geqslant 5\cdot 10^{37}$, $\Gamma\neq 0$ and $V_n=W_m$, then $m\leqslant 2$ or $n\leqslant 2$.
\end{lemma}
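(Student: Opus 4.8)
The strategy is the one standard for such one-parameter families: compare the analytic upper bound for $|\Gamma|$ coming from the primed version of Lemma \ref{lforma} with the Baker--W\"ustholz lower bound for the nonzero linear form $\Gamma$ in three logarithms, and then collide the resulting polylogarithmic upper bound for $m$ with the essentially exponential lower bound $m\geqslant 2|k|-1$ that the congruence analysis of the preceding sections forces on any non-trivial coincidence $V_n=W_m^{(j)}$. So suppose, for contradiction, that $m\geqslant 3$ and $n\geqslant 3$.

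First I record the two sides of the estimate for $\Gamma$. Since $|r+\sqrt{ab}|=|k+\sqrt{k^2-1}|=\alpha_1$ and $|s+\sqrt{ac}|=\alpha_2$, unwinding $|P'|$ and $|Q'|$ exhibits $\Gamma$ as an integer linear form $\Gamma=m\log\alpha_2-n\log\alpha_1+b_3\log\alpha_3$ with $b_3=\pm1$ and coefficient heights $m$, $n$, $1$; it is nonzero by hypothesis. Because $m,n\geqslant 3$, the bound established above (the analogue of Lemma \ref{lforma} for $P'$ and $Q'$) gives $|\Gamma|<K\sqrt{|ac|}\,\alpha_2^{-m}$, and since $\alpha_2=|s+\sqrt{ac}|\geqslant\sqrt{|ac|}$ while $|ac|$ is of order $|k|^4$, this reads $\log|\Gamma|<-2(m-1)\log|k|+O(1)$. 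For the lower bound I apply the Baker--W\"ustholz theorem \cite{bakerw} to $\Gamma$ inside $K':=\Q(\alpha_1,\alpha_2,\alpha_3)$, whose degree $d$ is bounded by an explicit absolute constant (all three are real numbers lying in a fixed extension of $\Q(i)$ generated by finitely many square roots, and $\alpha_1,\alpha_2$ have minimal polynomials of degree $8$). Taking $B:=\max\{m,n,e\}$ and using $n\leqslant 3m+2$ from Lemma \ref{lema:vmn} (so $B\leqslant 3m+2$), and feeding in $h(\alpha_1)\leqslant\tfrac14\log(2|k|+1)$ and $h(\alpha_2)\leqslant\tfrac12\log 3|k|$ computed above together with a bound $h(\alpha_3)\leqslant c_3\log|k|$ with explicit $c_3$ (this is exactly where Lemma \ref{lem:conj} enters, since all conjugates of $\alpha_3$ have absolute value at most $|k|^4$, so that $\log M(\alpha_3)\leqslant 4\deg(\alpha_3)\log|k|$ up to the polynomially bounded leading coefficient of the minimal polynomial of $\alpha_3$), one obtains
\[ \log|\Gamma|>-\,C_1\,(\log|k|)^3\log(3m+2) \]
for an explicit constant $C_1=C_1(d)$.

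Comparing the two estimates and cancelling a factor $\log|k|$ gives $m<C_2(\log|k|)^2\log(3m+2)$, whence an elementary manipulation produces an explicit bound of the shape $m<C_3(\log|k|)^2\log\log|k|$. On the other hand, since $m\geqslant 3$, the coincidence $V_n=W_m^{(j)}$ is none of the finitely many small-index ones, so the congruence analysis around \eqref{eq:mk}, based on Lemma \ref{lema:kongrvw}, forces $2k\mid m$ or $2k\mid m\pm1$, hence $m\geqslant 2|k|-1$. For $|k|\geqslant 5\cdot10^{37}$ one has $2|k|-1>C_3(\log|k|)^2\log\log|k|$, which is the required contradiction. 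Therefore $m\leqslant 2$ or $n\leqslant 2$; and by Lemma \ref{lema:vmn}, $n\leqslant 2$ in fact already forces $m\leqslant 2$.

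The real work is the bookkeeping of explicit constants rather than any new idea: pinning down the degree $d=[K':\Q]$, extracting a clean bound for $h(\alpha_3)$ from Lemma \ref{lem:conj} (which also needs the degree and the leading coefficient of the minimal polynomial of $\alpha_3$ under control), and finally verifying that the very large Baker--W\"ustholz constant $C_1$ still loses to $2|k|-1$ once $|k|\geqslant 5\cdot10^{37}$. All the auxiliary inequalities are of the ``triangle inequality plus a polynomial in $|k|$'' type ubiquitous throughout the paper.
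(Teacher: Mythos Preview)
Your proposal is correct and follows essentially the same route as the paper's proof: assume $m,n\geqslant 3$, combine the upper bound $|\Gamma|<K\sqrt{|ac|}\,\alpha_2^{-m}$ from (the primed version of) Lemma~\ref{lforma} with the Baker--W\"ustholz lower bound using the height estimates for $\alpha_1,\alpha_2,\alpha_3$ (the last via Lemma~\ref{lem:conj}), invoke $n\leqslant 3m+2$ from Lemma~\ref{lema:vmn}, and then contradict the resulting polylogarithmic bound on $m$ with $m\geqslant 2|k|-1$ from~\eqref{eq:mk}. The paper simply makes all your constants explicit, taking $d=2048$, $h'(\alpha_i)\leqslant 7\log|k|$, and working out the numerics to land on the threshold $5\cdot 10^{37}$; one small quibble is that $2|k|-1$ is linear rather than ``essentially exponential'' in $|k|$, though this does not affect the argument.
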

\begin{proof}
Assume that, on the contrary, $V_n=W_m$ and $n\geqslant m\geqslant 3$. It holds that $\ds M(\alpha_3) \leqslant |a_d|\prod_{i=1}^d \max\{|\alpha^{'}|, 1\}$ and $\ds |a_d| \leqslant \left(\sqrt{|k|+1}(|x_1|\sqrt{16|k|^3+4|k|}+|z_1|\sqrt{|k|+1})\right)^{32} < 257^{16} |k|^{65}$. Since Lemma \ref{lem:conj} provides the bound for conjugates  $|\alpha_3'| \leqslant |k|^4$, it follows that \[ h(\alpha_3) \leqslant \frac{1}{32}\log{(257^{16}|k|^{65}\cdot |k|^{4\cdot 32})} = 2.774538+\frac{193}{32}\log |k|.\]

For all three bounds it holds that $h'(\alpha_i) \leqslant 7\log|k|$.

Lemma \ref{lforma} implies $|\Gamma|=|\log \Lambda'| < K\sqrt{|ac|}|s+\sqrt{ac}|^{-m}$ (if $m, n\geqslant 3$), where $K=\frac 83 \log{\frac{24}{19}} =  0.622973$. Since $|s+\sqrt{ac}|\geqslant \sqrt{|ac|}=\sqrt{|(16k^3-4k)(k-1)|}$, it follows that $|s+\sqrt{ac}| > 3|k|^2$ (for $|k|>3$). Hence
\[ |\Gamma| < K\sqrt{|ac|}(3|k|^2)^{-m} < K(1.5|k|)^{1-m}. \]

We now apply the following well-known theorem from \cite{bakerw}.

\begin{theorem}[{Baker, W\"ustholz}]
Let $\Gamma = b_1\log \alpha_1 + b_2\alpha_2 + \dots+b_n\log\alpha_n$ be a linear form in logarithms of algebraic numbers $\alpha_1, \alpha_2, \dotsc, \alpha_n$ with integer coefficients $b_1, b_2, \dots, b_n$. If $\Gamma \neq 0$, then
\[ \log |\Gamma| \geqslant -18(n+1)!n^{n+1} (32d)^{n+2} \log(2nd) h'(\alpha_1)h'(\alpha_2)\dots h'(\alpha_n) \log B,\]
where $d=[\Q(\alpha_1, \alpha_2, \dotsc, \alpha_n):\Q], B=\max\{|b_1|, |b_2|, \dotsc, |b_n|\}$, and $h'(\alpha)=\max\{h(\alpha),\frac{1}{d}|\log \alpha|,\frac{1}{d}\}$.
\end{theorem}

We use the logarithm only for absolute values (positive reals), but this theorem holds more generally. For the logarithm of a complex number $z=re^{i\varphi}$ with $r>0$ we can take $\log z = \log r + i\varphi$.

Baker--W\"ustholz theorem implies that, if $\Gamma\neq 0$, then
\[ \log K(1.5|k|)^{1-m} > \log|\Gamma| >  -18\cdot 4!\cdot 3^4(32\cdot 2048)^5 \cdot 343\log^3|k|\log{(6\cdot 2048)}\log n,\]
meaning $(1-m)\log{\frac 32|k|} > \log K+(1-m)\log{\frac 32|k|} > -K'\log^3|k|\log n$, where $\log K\approx -0.47325$. Together with Lemma \ref{lema:vmn}, this implies that
\[ \frac{m-1}{\log{(3m+2)}} \leqslant \frac{m-1}{\log n} < K' \frac{\log^3|k|}{\log \frac 32 |k|} < K'\log^2 |k|,\]
where $K'=18\cdot 4!\cdot 3^4(32\cdot 2048)^5 \cdot 343\log{(6\cdot 2048)} \approx 1.3663\cdot 10^{32}$.

Since $m\geqslant 2|k|-1$ (by \eqref{eq:mk}) and since function $f(x)=\frac{x-1}{\log{(3x+2)}}$ is increasing, it follows that
$\ds \frac{2|k|-2}{\log{(6|k|-1)}} < K'\log^2|k|$ and $|k|-1 < 6.831506\cdot 10^{31} \log^2|k|\log{(6|k|-1)}$, which is impossible for $|k|\geqslant 5\cdot 10^{37}$.
\end{proof}

\begin{lemma}\label{bkroza} If $k$ is a Gaussian integer such that $\Imz k \Rez k\neq 0$, then $\ds \frac{|k+1|}{|k-1|}\not\in\Q$.
\end{lemma}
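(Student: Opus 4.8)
The statement is that $\frac{|k+1|}{|k-1|} \notin \Q$ whenever $k = \mu + i\nu$ with $\mu\nu \neq 0$. The natural approach is by contradiction: suppose $\frac{|k+1|}{|k-1|} = \frac{p}{q}$ for coprime positive integers $p, q$. Squaring, $\frac{|k+1|^2}{|k-1|^2}$ is rational, so write $|k+1|^2 = (\mu+1)^2 + \nu^2$ and $|k-1|^2 = (\mu-1)^2 + \nu^2$; these are real numbers (in fact, $|k|^2 + 2\mu + 1$ and $|k|^2 - 2\mu + 1$ respectively). The ratio being rational means $q^2\big(|k|^2 + 2\mu + 1\big) = p^2\big(|k|^2 - 2\mu + 1\big)$, i.e.
\[
(q^2 - p^2)\big(|k|^2 + 1\big) = -2\mu(p^2 + q^2).
\]
The key point I would exploit is that $\mu$ and $\nu$ are rational integers (since $k \in \Z[i]$), hence $|k|^2 = \mu^2 + \nu^2$ is a rational integer, and the displayed equation forces $\mu$ to be rational — which it already is — so this alone is not yet a contradiction. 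The real obstruction must come from a finer arithmetic fact.

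**Key steps.** First I would rewrite things multiplicatively in $\Z[i]$: note $|k+1|^2 = (k+1)\overline{(k+1)} = (k+1)(\bar{k}+1)$ and similarly $|k-1|^2 = (k-1)(\bar k - 1)$, both genuine positive rational integers since $\mu, \nu \in \Z$. So $\frac{|k+1|}{|k-1|} \in \Q$ is equivalent to the integer $N_+ := (\mu+1)^2 + \nu^2$ and the integer $N_- := (\mu-1)^2 + \nu^2$ having rational square-root ratio, i.e. $N_+ N_-$ being a perfect square (since $\sqrt{N_+/N_-} = \sqrt{N_+ N_-}/N_-$). Thus the whole problem reduces to: \emph{if $\mu, \nu \in \Z$ with $\mu\nu \neq 0$, then $\big((\mu+1)^2+\nu^2\big)\big((\mu-1)^2+\nu^2\big)$ is not a perfect square.} Now I would compute this product explicitly:
\[
N_+ N_- = \big(\mu^2 + \nu^2 + 1\big)^2 - 4\mu^2 = (\mu^2+\nu^2+1)^2 - (2\mu)^2.
\]
So I need: $(\mu^2+\nu^2+1)^2 - (2\mu)^2$ is a square, say $= w^2$, with $0 \le w < \mu^2+\nu^2+1$ (strict since $\mu \neq 0$). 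Then $(\mu^2+\nu^2+1)^2 - w^2 = (2\mu)^2$, a factorization of $(2\mu)^2$ as a difference of two squares of integers with the larger being $\mu^2+\nu^2+1$. The difference of consecutive squares near $\mu^2+\nu^2+1$ is about $2(\mu^2+\nu^2+1)$, which already exceeds $(2\mu)^2 = 4\mu^2$ once $\nu$ is large relative to $\mu$; more precisely, writing $M = \mu^2+\nu^2+1$ and $w = M - e$ with $e \ge 1$, we get $4\mu^2 = M^2 - w^2 = e(2M - e) \ge 2M - 1 = 2\mu^2 + 2\nu^2 + 1$, hence $2\mu^2 \ge 2\nu^2 + 1$, i.e. $\mu^2 > \nu^2$. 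Since $\nu \neq 0$ this gives $|\mu| > |\nu| \ge 1$, so $|\mu| \ge 2$; but the same inequality with the roles of $\pm 1$ shifts — actually I should then bound $e \le \frac{4\mu^2}{2M-e} < \frac{4\mu^2}{2\mu^2} = 2$ when $\mu^2 > \nu^2 \ge 1$ (using $2M - e > 2M - 2 = 2\mu^2 + 2\nu^2 \ge 2\mu^2 + 2 > 2\mu^2$, roughly), forcing $e = 1$, whence $4\mu^2 = 2M - 1 = 2\mu^2 + 2\nu^2 + 1$, i.e. $2\mu^2 - 2\nu^2 = 1$, which is impossible mod $2$.

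**Main obstacle.** The delicate point is making the two inequality chains airtight for all edge cases — small values of $|\mu|$ and $|\nu|$, and the possibility $w = 0$ (which would need $(\mu^2+\nu^2+1)^2 = 4\mu^2$, i.e. $\mu^2+\nu^2+1 = 2|\mu|$, i.e. $(|\mu|-1)^2 + \nu^2 = 0$, forcing $\nu = 0$, already excluded). So the real content is the elementary Diophantine claim that $(\mu^2+\nu^2+1)^2 - 4\mu^2$ is never a perfect square for $\mu\nu \neq 0$, and the bookkeeping in the squeeze argument $e \in \{1\}$ followed by the parity contradiction $2(\mu^2 - \nu^2) = 1$ is where I would be most careful. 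One should double-check that the inequality $2M - e > 2\mu^2$ genuinely holds: $2M - e = 2\mu^2 + 2\nu^2 + 2 - e \ge 2\mu^2 + 2\nu^2 + 2 - (2M-1)/1$... it is cleaner to argue directly that $e \ge 1$ gives $4\mu^2 \ge 2M - 1$ so $\nu^2 \le \mu^2 - 1$, and then $e(2M - e) = 4\mu^2$ with $2M - e \ge 2M - 4\mu^2 \ge 2\mu^2 + 2\nu^2 + 2 - 4\mu^2 = 2(\nu^2 + 1) - 2\mu^2$; since this could be negative I would instead bound $e$ from the original: $w^2 = M^2 - 4\mu^2 \ge M^2 - 4M + 4 = (M-2)^2$ once $M \ge ?$... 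The honest statement is that once $\mu^2 \le \nu^2 + 1$ we have $4\mu^2 \le 4\nu^2 + 4 < 2M$, contradicting $4\mu^2 = e(2M - e) \ge 2M - 1$; and once $\mu^2 \ge \nu^2 + 2$ the value $4\mu^2$ lies strictly between $(M-2)^2$'s complement and... I would present the clean version: $M - 1 < w$ is false (that was $e = 1$ forced) and $w \le M - 2$ gives $4\mu^2 = M^2 - w^2 \ge M^2 - (M-2)^2 = 4M - 4 = 4\mu^2 + 4\nu^2$, impossible since $\nu \neq 0$. Hence $w = M - 1$, giving $4\mu^2 = 2M - 1 = 2\mu^2 + 2\nu^2 + 1$, i.e. $2(\mu^2 - \nu^2) = 1$, contradicting the left side being even. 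This completes the argument.
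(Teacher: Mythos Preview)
Your argument is correct and essentially identical to the paper's. Both reduce the claim to showing that $N_+N_- = (\mu^2+\nu^2+1)^2 - 4\mu^2$ is not a perfect square when $\mu\nu\neq 0$, and both finish by a squeeze between nearby squares: the paper bounds $(x^2+y^2-2)^2 < N_+N_- < (x^2+y^2+1)^2$ and rules out the two intermediate values $(x^2+y^2)^2$ and $(x^2+y^2-1)^2$ by the same parity and $\nu\neq 0$ contradictions you obtain for $w=M-1$ and $w\le M-2$. Your final ``clean version'' (the last four lines) is in fact slightly tighter than the paper's, eliminating one case; the preceding exploratory paragraphs should simply be deleted.
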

\begin{proof} It is sufficient to show that $|k+1|\cdot|k-1|$ is not in $\Q$, so it also suffices to show that it is not a rational integer. If $k=x+yi$, then $|k+1|^2\cdot|k-1|^2=x^4+y^4+1+2x^2y^2-2x^2+2y^2$, so we need to show that this expression is not a perfect square.

It holds that $x^4+y^4+1+2x^2y^2-2x^2+2y^2 > (x^2+y^2-2)^2$, because this is equivalent to $2 x^2 + 6 y^2 > 3$. Similarly, $x\neq 0$ implies that $x^4+y^4+1+2x^2y^2-2x^2+2y^2 < (x^2+y^2+1)^2$.

From $x^4+y^4+1+2x^2y^2-2x^2+2y^2=(x^2+y^2)^2$, it follows that $1 - 2 x^2 + 2 y^2=0$, which is impossible (parity check), while $x^4+y^4+1+2x^2y^2-2x^2+2y^2=(x^2+y^2-1)^2$ implies that $4y^2=0$, again impossible since $y\neq 0$.
\end{proof}

\begin{lemma}\label{ckroza} If $k$  is a Gaussian integer such that $\Imz k \neq 0$, then $\ds \frac{|16k^3-4k|}{|k-1|} \not\in\Q$
\end{lemma}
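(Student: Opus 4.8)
The plan is to mimic the structure of Lemma~\ref{bkroza}: reduce the irrationality statement to showing that a certain explicit integer polynomial in the real and imaginary parts of $k$ is never a perfect square, then squeeze that polynomial strictly between two consecutive squares (or use a parity/divisibility argument in the boundary cases). Write $k = x+yi$ with $x,y\in\Z$ and $y\neq 0$. Since $\frac{|16k^3-4k|}{|k-1|}\in\Q$ would force $\frac{|16k^3-4k|^2}{|k-1|^2}\in\Q$, and since this quantity is a quotient of rational integers, it suffices to show that $|k-1|^2 \mid |16k^3-4k|^2$ fails, or more robustly that $|16k^3-4k|^2\cdot|k-1|^2$ (equivalently $|16k^3-4k|\cdot|k-1|$, whose square is this integer) is not a perfect square. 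Note $16k^3-4k = 4k(4k^2-1) = 4k(2k-1)(2k+1)$, so $|16k^3-4k|^2 = 16\,|k|^2\,|2k-1|^2\,|2k+1|^2$, and the quantity to analyze is $N := 16\,|k|^2\,|2k-1|^2\,|2k+1|^2\,|k-1|^2$. The factor $16$ is already a square, so it remains to show $|k|^2|2k-1|^2|2k+1|^2|k-1|^2$ is not a perfect square, i.e.\ that $|k|\cdot|2k-1|\cdot|2k+1|\cdot|k-1|$ is not a rational integer unless it is, in which case we must still rule out it being a square — but the cleaner route is: it suffices to show $|2k-1|^2|2k+1|^2|k-1|^2\,|k|^2$ is not a square of an integer.

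Concretely, I would expand each factor: $|k|^2 = x^2+y^2$, $|k-1|^2 = (x-1)^2+y^2$, $|2k-1|^2 = (2x-1)^2+4y^2$, $|2k+1|^2 = (2x+1)^2+4y^2$. Their product $M$ is an explicit degree-$8$ polynomial in $x,y$; since $y\neq 0$, $M$ is large, and the strategy is to find an integer $g(x,y)$ (a polynomial of degree $4$, the ``expected square root'') such that $g^2 < M < (g+1)^2$ for all but finitely many $(x,y)$, and to handle the finitely many exceptional $(x,y)$ — and the boundary equalities $M = g^2$ or $M=(g+1)^2$ — by a direct parity or small-case check. The natural guess for $g$ is the leading behaviour of $\sqrt{M}$: since $M = (x^2+y^2)\cdot((x-1)^2+y^2)\cdot((2x-1)^2+4y^2)\cdot((2x+1)^2+4y^2)$ and the last two factors multiply to $(4x^2+4y^2-1)^2 + \text{lower order} = (4(x^2+y^2)-1)^2 - \dots$, one expects $\sqrt{M}\approx (x^2+y^2)\cdot(4(x^2+y^2)-1)\cdot\sqrt{\frac{(x-1)^2+y^2}{x^2+y^2}}$, which is not polynomial; so a single quadratic interval may not suffice globally.

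Because of that, I expect the main obstacle to be exactly this: the four factors do not pair up into near-squares cleanly (unlike in Lemma~\ref{bkroza}, where only two factors appeared and their product sat between $(x^2+y^2-2)^2$ and $(x^2+y^2+1)^2$). The remedy is to work modulo a small prime or modulo $4$: compute $M = |k|^2|k-1|^2|2k-1|^2|2k+1|^2 \bmod{\text{(something)}}$ as a function of the parities of $x$ and $y$, and show the residue is a quadratic non-residue (or $\equiv 2,3\pmod 4$, etc.) in every parity class except possibly one, then dispatch the remaining class separately. Note $|2k-1|^2=(2x-1)^2+4y^2\equiv 1\pmod 4$ and likewise $|2k+1|^2\equiv 1\pmod 4$ always, so $M\equiv (x^2+y^2)((x-1)^2+y^2)\pmod 4$; analysing $(x^2+y^2)((x-1)^2+y^2)\bmod 4$ by the parities of $x,y$ should immediately kill most cases (e.g.\ if $x,y$ both even then $M\equiv 0\cdot(\cdot)$, needing a deeper $2$-adic valuation count), leaving a short finite check. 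I would finish by combining: in each parity class either $M$ is a non-square modulo $4$ (or a higher power of $2$), or else one shows the $2$-adic valuation $v_2(M)$ is odd, or else — in the genuinely tight case — one reverts to the ``between consecutive squares'' sandwich restricted to that class, where the polynomial $g$ can now be chosen validly because the residual class constrains $x,y$ enough. The only truly delicate point is keeping track of $v_2$ when $x$ and $y$ are both even; here writing $k = 2k'$ and noting $16k^3-4k = 8k'(16(k')^2-1)$ while $k-1=2k'-1$ is odd makes $v_2(|16k^3-4k|^2|k-1|^2) = v_2(64\,|k'|^2|16(k')^2-1|^2) = 6 + v_2(|k'|^2)$, and one continues the descent on $k'$, which terminates since $|k'|<|k|$.
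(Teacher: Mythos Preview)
Your reduction to showing that $M=|k|^2|k-1|^2|2k-1|^2|2k+1|^2$ is not a perfect square is exactly right, and your mod~$4$ observation is sound: since $|2k\pm1|^2\equiv 1\pmod 4$, one has $M\equiv (x^2+y^2)((x-1)^2+y^2)\pmod 4$, and this is $\equiv 2\pmod 4$ whenever $y$ is odd. But for $y$ even the rest of your plan does not close. The $2$-adic valuation of $M$ is $v_2(x^2+y^2)$ (if $x$ even) or $v_2((x-1)^2+y^2)$ (if $x$ odd), and this is odd only when the two summands have \emph{equal} $2$-adic valuation; when they differ (e.g.\ $x=3$, $y=4$) you get even $v_2(M)$ and are stuck. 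Your descent $k=2k'$ does not help either: $16k^3-4k=8k'(16(k')^2-1)$ is not of the form $16(k')^3-4k'$, so you do not land back in the same problem, and ``the residual class constrains $x,y$ enough'' for a consecutive-squares sandwich is asserted but never demonstrated --- indeed the same obstruction you identified (the four factors do not pair into a near-square) persists in each residual class.

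The paper proceeds quite differently after the common reduction. Substituting $z=-4x+1$, $u=z^2$, $v=y^2$, one finds the identity
\[
M \;=\; \bigl(u^2+(32v-10)u+256v^2+160v+9\bigr)^2 \;+\; 4096\,uv,
\]
so $M$ strictly exceeds a specific square $A^2$ (here $uv>0$ since $y\neq 0$ and $z=1-4x$ is odd). If $M=(A+w)^2$ for some integer $w\geqslant 1$, this gives a monic quadratic in $u$ whose discriminant factors as $-2(w-32)(w^2+640vw+65536v^2)$, which is negative for $w>32$. Thus only $w\in\{1,\dots,32\}$ need be examined, and each is disposed of by a short modular or between-consecutive-squares check on the discriminant $D(v,w)$ (odd $w$ give $D\equiv 2\pmod 4$; most even $w$ die mod a power of $2$; $w\in\{14,24,30\}$ are squeezed between consecutive squares; $w=32$ forces $u=16v+5$, which is $\equiv 5\pmod{16}$, not a square). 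The key idea you are missing is this explicit ``square plus small positive remainder'' decomposition, which converts the global squeeze you could not find into a bound on a single auxiliary integer $w$.
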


\begin{proof} Again, it suffices to show that $|16k^3-4k|\cdot|k-1| \not\in\Z$, i.~e.~$|4k^3-k|^2\cdot|k-1|^2$ is not a perfect square.

If $k=x+yi$, then $|4k^3-k|^2\cdot|k-1|^2 =| 4 x^4 - 4 x^3 - 24 x^2 y^2 - x^2 + 12 x y^2 + x + 4 y^4 + y^2+ i (16 x^3 y - 12 x^2 y - 16 x y^3 - 2 x y + 4 y^3 + y)|^2=(x^2-2x+1+y^2)(4x^2-4x+1+4y^2)(4x^2+4x+1+4y^2)(x^2+y^2)$.
By substituting $z=-4x+1$, we get $(z^4+(32y^2-10)z^2+160y^2+256y^4+9)^2 + 4096y^2z^2$. Further substitutions $u=z^2, v=y^2$ give that $(u^2+(32v-10)u+160v+256v^2+9)^2 + 4096uv$ is a square, where $u$ and $v$ are also perfect squares. Since $y=\Imz k\neq 0$, we see that $v \neq 0$, and neither $u=(-4x+1)^2\neq 0$. Hence $(u^2+(32v-10)u+160v+256v^2+9)^2 + 4096uv > (u^2+(32v-10)u+160v+256v^2+9)^2$. If the left-hand side is a square, there is a positive integer $w$ such that $(u^2+(32v-10)u+160v+256v^2+9)^2 + 4096uv=(u^2+(32v-10)u+160v+256v^2+9+w)^2$.

The equation $2w(u^2+(32v-10)u+160v+256v^2+9)+w^2-4096uv=0$ is quadratic in $u$. The discriminant is
\begin{align*}
4D(v, w) &= -8( w^3 - 32 w^2 + 65536 v^2 w -2097152 v^2 + 640 v w^2  - 20480 v w)\\
&= -4\cdot 2((w^2 (w - 32) + 65536 v^2 (w - 32) + 640 wv (w - 32)),
\end{align*}
which is negative for $w > 32$. For a solution to be an integer, the discriminant must be a perfect square. It follows that $w\in\{1, 2, \dotsc, 32\}$.

Since $D(v, 32)=0$, solving the quadratic equation implies that $u=16v+5$, which is not a square because $5$ is not a quadratic remainder modulo $16$. For the most of the remaining values of $w$, in a similar manner we show that $D(v, w)$ is not a square. Observe that $D(v, w)\equiv -2w^2(w-32)\pmod{128}$.

For odd $w$, we get that $D(v, w)\equiv 2\pmod{4}$,  which cannot be a square. For $w\equiv 2\pmod{8}$, it holds that $D(v, w)\equiv -16\pmod{64}$, while for $w\equiv 4\pmod{8}$, $D(v, w)\equiv 128\pmod{256}$ and again the discriminant cannot be a square. 

For $w=6$, $\frac{D(v, 6)}{16}=212992 v^2 + 12480 v + 117 \equiv 5\pmod{16}$, so $D(v, 6)$ is not a square. Similarly, $\frac{D(v, 8)}{256} \equiv 12\pmod{16}$, $\frac{D(v, 16)}{4096}\equiv 2 \pmod{4}$ and $\frac{D(v, 22)}{16} \equiv 13\pmod{16}$ imply that none of $D(v, 8)$, $D(v, 16)$ and $D(v, 22)$ can be a square.

For $w=14$, it holds that $(128v+9)^2 > \frac{D(v, 14)}{144} =(128v+7)^2+448v$, hence $D(v, 14)$ is not a square since $v>0$. Similarly, $(32v+4)^2>\frac{D(v, 24)}{1024}=1024v^2+240v+9=(32v+3)^2+48$ and $(128v+19)^2 > \frac{D(v, 30)}{16}=(128v+15)^2+960v$ show that neither $D(v, 24)$ nor $D(v, 30)$ is a square ($v>0$).

We have checked all $w\in\{1, 2, \dotsc, 31, 32\}$ and thus proven the lemma.
\end{proof}

\begin{theorem}
Let $k$ be a Gaussian integer such that $\Rez k \neq 0$ and $|k|\geqslant 5\cdot 10^{37}$. The Diophantine triple $\{k-1, k+1, 16k^3-4k\}$ can be extended to a Diophantine quadruple only by $d=4k$ or $d=64k^5 -48k^3+8k$. 
\end{theorem}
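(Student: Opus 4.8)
The plan is to push any extension into the pair of sequences $(V_n)$ and $(W_m^{(j)})$ built in Sections~4--7, to use the size estimates together with the linear-form-in-logarithms bound to force the index $m$ to be small, and then to settle the finitely many remaining cases by inspection.

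First I would dispose of the subcase $\Imz k=0$, i.e.\ $k\in\Z$, for which the statement is precisely the theorem of Bugeaud--Dujella--Mignotte \cite{16k3z}. Hence I may assume $\Imz k\neq 0$, and together with the hypothesis $\Rez k\neq 0$ this yields $\Rez k\,\Imz k\neq 0$, which is exactly what is required to invoke Lemmas~\ref{bkroza} and \ref{ckroza}. Put $a=k-1$, $b=k+1$, $c=16k^3-4k$, and suppose $d$ extends $\{a,b,c\}$ to a Diophantine quadruple. Then there is $x\in\Z[i]$ with $(k-1)d+1=x^2$; if $x=\pm1$ then $d=0$, which is excluded since a Diophantine tuple contains no $0$, so $x\neq\pm1$. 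By Lemma~\ref{lema:nizw} (applicable as $|k|>17$) there are $j\in\{1,\dots,6\}$ and positive integers $m,n$ with $x=V_n=\pm W_m^{(j)}$; since the leading coefficient in $k$ is positive for every $V_n$ and every $W_m^{(j)}$, the minus sign is impossible for $|k|$ large, so $V_n=W_m^{(j)}$.

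The decisive step is to show $\Gamma\neq 0$, which is the hypothesis of Lemma~\ref{lem:bw}. I would obtain it by rerunning the proof of Lemma~\ref{lema:fneq0}, now for the system \eqref{e1}--\eqref{e2} in place of the system of Section~2; this is exactly the situation flagged in the remark after Lemma~\ref{lema:fneq0}. The linear-independence input required there reduces, in the present configuration, to $\frac{|k+1|}{|k-1|}\notin\Q$ and $\frac{|16k^3-4k|}{|k-1|}\notin\Q$ --- which are Lemmas~\ref{bkroza} and \ref{ckroza} --- while the non-squareness of $a/b$, $a/c$, $b/c$ needed for the analogue of Claim~A comes from Lemma~\ref{acnekva}. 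I expect this to be the main obstacle: not because a new idea is needed, but because the linear-independence bookkeeping of Lemma~\ref{lema:fneq0} must be transcribed faithfully to the new field configuration and its hypotheses re-checked there. It is also precisely here that the excluded case $\Rez k=0$ would fail, since then $|k+1|=|k-1|$ and $\frac{|k+1|}{|k-1|}=1\in\Q$.

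With $\Gamma\neq 0$ secured and $|k|\geqslant 5\cdot 10^{37}$, Lemma~\ref{lem:bw} gives $m\leqslant 2$ or $n\leqslant 2$; combined with $m\leqslant n\leqslant 3m+2$ from Lemma~\ref{lema:vmn} this forces $m\leqslant 2$ in all cases, whence $n\leqslant 8$. It then remains to examine the finitely many triples $(m,n,j)$ with $1\leqslant m\leqslant 2$, $m\leqslant n\leqslant 3m+2$, $1\leqslant j\leqslant 6$ and decide for which of them $V_n=W_m^{(j)}$ holds identically in $k$ --- equivalently, numerically, since $|k|$ is enormous --- via degree comparisons and the residues modulo $4k(k-1)$ recorded in Lemma~\ref{lema:kongrvw}. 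The only surviving possibilities are $x=V_n\in\{1,\;2k-1,\;8k^3-4k^2-4k+1\}$; ruling out $x=\pm1$ (which would force the excluded $d=0$) leaves $(k-1)d+1=(2k-1)^2$ or $(k-1)d+1=(8k^3-4k^2-4k+1)^2$, i.e.\ $d=4k$ or $d=64k^5-48k^3+8k$. That both of these do extend the triple was noted just after Lemma~\ref{lema:nizw}, which completes the proof.
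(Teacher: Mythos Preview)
Your proposal is correct and follows essentially the same route as the paper: reduce the real case to \cite{16k3z}, use Lemmas~\ref{bkroza} and~\ref{ckroza} to feed the $\Gamma\neq 0$ argument of Lemma~\ref{lema:fneq0}, invoke Lemma~\ref{lem:bw}, and finish by inspecting small indices. One small gap worth patching is your real case: when $\Imz k=0$ you cannot invoke \cite{16k3z} directly, since a priori $d\in\Z[i]$; the paper first observes that the recursion \eqref{eq:nizv4} has integer data when $k\in\Z$, so $x=\pm V_n\in\Z$ and hence $d=(x^2-1)/(k-1)\in\Q\cap\Z[i]=\Z$, after which \cite{16k3z} applies.
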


\begin{proof}
If $\Imz k = 0$, then the elements of the sequence $(V_n)_n$ are integers, hence $x$ is an integer too. Since $(k-1)d+1=x^2$, it follows that $d\in\Q\cap\Z[i]$, so $d$ is an integer. The sign of $d$ is the same as the sign of $k-1, k+1$ and $16k^3-4k$ (because $d=\frac{x^2-1}{k-1} \neq 0$), so Theorem 1 from \cite{16k3z}, since $|k|\geqslant 2$, implies that $d=4k$ or $d=64k^5 -48k^3+8k$.

From now on, we assume that $\{a, b, c, d\}$ is a Diophantine quadruple for $a=k-1, b=k+1, c=16k^3-4k$ and that $\Imz k$ is not $0$.

Checking $V_n$ and $W_m$ for small indices, we obtain the extensions $4k$, $64k^5-48k^3+8k$ and candidates such as $W_1^{(1)}=4k^2-k-2$. By computing the first few elements of $(V_n)_n$, $V_1=2k-1$ and $V_2=4k^2-2k-1$, we see that $W_1^{(2)}$ cannot have the same value (for large $|k|$), and neither can the larger elements $V_n$ for $n\geqslant 3$, since these are greater  in absolute value than $W_1^{(1)}$:
$|V_n|-|W_1^{(1)}|\geqslant |V_3-W_1^{(1)}|=|8k^3-4k^2-4k+1-(4k^2-k-2)|>0$ for $|k|>10^{37}$. Similarly, $V_1=2k-1$ and $V_2=4k^2-2k-1$ cannot be an element of the sequence $W_m^{(1)}$. Analogously we check sequences $W_m^{(j)}$ for the remaining $j=2, 3, 4, 5, 6$.

Therefore, indices $n$ and $m$ are greater than $2$ if $d\not\in \{4k, 64k^5-48k^3+8k\}$.

Lemma \ref{bkroza} and Lemma \ref{ckroza} imply that $\frac{|c|}{|a|}$ and $\frac{|b|}{|a|}$ are not rational numbers. In the same manner as in the Lemma \ref{lema:fneq0}, this implies that the linear form $\Gamma=\log\Lambda'$ is not $0$. If $V_n=W_m$ for $m\geqslant 2$ and $n\geqslant 2$, then Lemma \ref{lem:bw} would imply that $|k| < 5\cdot 10^{37}$, which is a contradiction. Therefore, the assumption that $V_n=W_m$ for $m\geqslant 3$ and $n\geqslant 3$ is wrong, and so is the claim that $d\not\in \{4k, 64k^5-48k^3+8k\}$ for $|k| \geqslant 5\cdot10^{37}$.
\end{proof}

\bibliographystyle{amsplain}
\providecommand{\bysame}{\leavevmode\hbox to3em{\hrulefill}\thinspace}
\providecommand{\MR}{\relax\ifhmode\unskip\space\fi MR }
\providecommand{\MRhref}[2]{%
  \href{http://www.ams.org/mathscinet-getitem?mr=#1}{#2}
}
\providecommand{\href}[2]{#2}



\section*{Acknowledgements}
N.~A.~and A.~F.~were supported by the Croatian Science Foundation under the project no.~IP-2018-01-1313.


\end{document}